\documentclass[]{article}

\usepackage{amsmath}
\usepackage{amssymb}
\usepackage{amsthm}
\usepackage{mathrsfs}
\usepackage{pxfonts}
\usepackage{sectsty}
\usepackage{enumitem}
\usepackage{microtype}
\usepackage{graphicx}
\usepackage{xcolor}
\usepackage{tikz}
\usetikzlibrary{arrows.meta,calc,fit,positioning}
\usepackage[hidelinks]{hyperref}
\hypersetup{pdftitle={UNIQUENESS FOR DLR EQUATIONS OF Sine beta}}

\allowdisplaybreaks

\sectionfont{\scshape\centering\fontsize{11}{14}\selectfont}
\subsectionfont{\scshape\fontsize{11}{14}\selectfont}

\usepackage{fancyhdr}

\newcommand\shorttitle{UNIQUENESS FOR DLR EQUATIONS OF \(\mathsf{Sine}_\beta\)}
\newcommand\authors{T. Assiotis}

\newtheorem{thm}{Theorem}[section]

\newtheorem{lem}[thm]{Lemma}
\newtheorem{prop}[thm]{Proposition}
\newtheorem{defn}[thm]{Definition}

\theoremstyle{definition}

\theoremstyle{plain}

\newcommand{\E}{\mathbb{E}}
\newcommand{\R}{\mathbb{R}}
\newcommand{\N}{\mathbb{N}}
\newcommand{\Conf}{\mathsf{Conf}}
\newcommand{\Ent}{\mathsf{Ent}}
\newcommand{\Var}{\operatorname{Var}}
\newcommand{\Sine}{\mathsf{Sine}}
\newcommand{\cW}{\mathcal{W}}
\newcommand{\Bin}{\mathsf{Bin}}
\newcommand{\scrF}{\mathscr{F}}
\newcommand{\defeq}{\overset{\mathrm{def}}{=}}
\newcommand{\1}{\mathbf{1}}
\newcommand{\dd}{\mathrm{d}}
\newcommand{\eps}{\varepsilon}
\newcommand{\intE}{\mathrm{int}}
\newcommand{\per}{\mathrm{per}}
\newcommand{\bC}{\boldsymbol{\mathsf{C}}}
\newcommand{\bgamma}{\boldsymbol{\gamma}}
\newcommand{\betaeta}{\boldsymbol{\eta}}
\newcommand{\bxi}{\boldsymbol{\xi}}
\newcommand{\bzeta}{\boldsymbol{\zeta}}
\newcommand{\balpha}{\boldsymbol{\alpha}}
\newcommand{\br}{\boldsymbol{r}}
\newcommand{\bX}{\boldsymbol{\mathsf{X}}}
\newcommand{\bY}{\boldsymbol{\mathsf{Y}}}
\newcommand{\bZ}{\boldsymbol{\mathsf{Z}}}
\newcommand{\bCh}{\boldsymbol{\mathsf{Ch}}}
\newcommand{\bP}{\mathbf{P}}
\newcommand{\bQ}{\mathbf{Q}}
\newcommand{\bR}{\mathbf{R}}

\newcommand{\sN}{\mathsf{N}}
\newcommand{\sD}{\mathsf{D}}
\newcommand{\sS}{\mathsf{S}}
\newcommand{\sB}{\mathsf{B}}

\definecolor{DLRBlue}{HTML}{1F5A99}
\definecolor{EnergyTeal}{HTML}{147D73}
\definecolor{EntropyOrange}{HTML}{C45A14}
\definecolor{RecoveryGreen}{HTML}{568A25}
\definecolor{BoundaryRed}{HTML}{B33C61}
\definecolor{ConclusionPurple}{HTML}{6B55A3}
\definecolor{ArrowGray}{HTML}{4C566A}
\tikzset{
 proofnode/.style={rounded corners=2pt, draw, thick, align=center,
   inner xsep=5pt, inner ysep=5pt, font=\footnotesize,
   text width=3.15cm, minimum height=1.05cm},
 dlrnode/.style={proofnode, draw=DLRBlue, fill=DLRBlue!10},
 energynode/.style={proofnode, draw=EnergyTeal, fill=EnergyTeal!10},
 entropynode/.style={proofnode, draw=EntropyOrange, fill=EntropyOrange!10},
 recoverynode/.style={proofnode, draw=RecoveryGreen, fill=RecoveryGreen!10},
 boundarynode/.style={proofnode, draw=BoundaryRed, fill=BoundaryRed!10},
 conclusionnode/.style={proofnode, draw=ConclusionPurple, fill=ConclusionPurple!10},
 proofarrow/.style={-{Latex[length=2.2mm]}, thick, draw=ArrowGray}
}

\title{\large\bf UNIQUENESS FOR DLR EQUATIONS OF \(\Sine_\beta\)}
\author{\small THEODOROS ASSIOTIS}
\date{}

\begin{document}

\maketitle

\begin{abstract}
We prove that, for all \(\beta>0\), the Dobrushin--Lanford--Ruelle (DLR)
equations for the unit-intensity  \(\mathsf{Sine}_\beta\) point process have a unique stationary solution under a
particular finite electric energy condition.  Uniqueness fails if this
condition is dropped.  This answers a question of
Dereudre--Hardy--Lebl\'e--Ma\"ida \cite{DHLM} and gives a canonical
statistical physics characterisation of \(\mathsf{Sine}_\beta\). The main technical ingredient is a relative entropy estimate which allows us to compare the conditional distribution in a finite box of a DLR solution to a corresponding circular $\beta$ ensemble.
\end{abstract}

\tableofcontents

\section{Introduction}
\subsection{Setting and motivation}
The \(\Sine_\beta\) point process is a universal bulk scaling limit for
broad classes of one-dimensional \(\beta\)-log gases; see, for example,
\cite{BourgadeErdosYau,BourgadeErdosYauNonconvex,ErdosYauBook}.  It
first arose in the constructions of Valk\'o--Vir\'ag and
Killip--Stoiciu \cite{ValkoViragCarousel,KillipStoiciu}.  It has
intrinsic characterisations in terms of a stochastic differential
equation describing its counting function \cite{ValkoViragCarousel}
and as the spectrum of an explicit stochastic operator
\cite{ValkoViragSineOperator}.  For our purposes, the most convenient
definition is as the weak limit of the circular \(\beta\)-ensemble.
Namely, if
\((\theta_1^{(N)},\ldots,\theta_N^{(N)})\in[-\pi,\pi)^N\) has law
\[
 \frac{1}{Z_{N,\beta}^{\mathrm{circ}}}
 \prod_{1\leq j<k\leq N}
 \left|
   \mathrm e^{\mathrm i\theta_j^{(N)}}
   -\mathrm e^{\mathrm i\theta_k^{(N)}}
 \right|^\beta
 \prod_{j=1}^N\frac{\dd\theta_j^{(N)}}{2\pi},
 \qquad
 \operatorname{Law}\!\left(
   \sum_{j=1}^N
   \delta_{\frac{N}{2\pi}\theta_j^{(N)}}
 \right)
 \xrightarrow[N\to\infty]{\mathrm w}
 \Sine_\beta ,
\]
where the convergence is on configuration space equipped with the
vague topology \cite{VV}.  For \(\beta=2\), \(\Sine_2\) is the
determinantal sine-kernel process \cite{ForresterBook}, which is also
conjectured to describe the local statistics of the zeros of the
Riemann zeta function high up the critical line $\Re(z)=1/2$, see
\cite{MontgomeryPairCorrelation,KeatingSnaithZeta,BourgadeKeating}.
The purpose of this paper is to develop further a different,
statistical-physics viewpoint on \(\Sine_\beta\).

This sits within a broad long-term programme, initiated in this form by Sandier and Serfaty and
subsequently developed by Serfaty and several collaborators, which gives a
rigorous statistical-mechanics description of logarithmic, Coulomb and
Riesz gases through suitable notions of renormalised electric energy
\cite{SandierSerfaty2D,SandierSerfaty1D,PS,RougerieSerfaty,
SerfatyCoulombGinzburg,SerfatyLecturesCoulombRiesz}.
For the one-dimensional log gas in a series of fundamental papers, Lebl\'e and Serfaty (LS) introduced the
corresponding free-energy functional and proved a large-deviation
principle \cite{LS}, Erbar, Huesmann and Lebl\'e (EHL) proved that
\(\Sine_\beta\) is its unique stationary minimiser \cite{EHL} and
Dereudre, Hardy, Lebl\'e and Ma\"ida (DHLM) proved that \(\Sine_\beta\) is
number-rigid and satisfies the canonical DLR equations \cite{DHLM}. The DLR formalism
\cite{Dobrushin1968,LanfordRuelle,Georgii} says that, conditionally on
the exterior configuration \(\bxi\) and on the number \(n\) of
particles in a bounded region \(\Lambda\), the interior configuration
has law
\[
 \bP\!\left(
   \dd\betaeta
   \,\middle|\,
   \bC_{\Lambda^c}=\bxi,\ \bC(\Lambda)=n
 \right)
 \propto
 \exp\!\left[-\beta H_\Lambda(\betaeta\mid\bxi)\right]
 \Bin_{n,\Lambda}(\dd\betaeta).
\]
Here \(H_\Lambda\) contains the interior logarithmic interaction and a
renormalised exterior potential and $\Bin_{n,\Lambda}$ is the standard Binomial reference law. For $\Sine_2$ and more general determinantal point processes results of this type first appeared in the works of Bufetov \cite{BufetovConditional,BufetovQuasiSymmetries}. Such DLR, or weaker quasi-Gibbs, descriptions also underlie the
construction and study of infinite-dimensional analogues of Dyson Brownian
motion
\cite{DysonBrownianMotion,Osada2012,Osada2013,OsadaTanemura2016, SuzukiErgodicity,SuzukiCurvature,
AssiotisSuzukiCollision}.
Recent spectacular developments in higher dimension include the DLR theory for the
two-dimensional one-component plasma \cite{LebleDLR2DOCP} and the
Coulomb dynamics studied by Osada and Osada
\cite{OsadaOsadaCoulombISDE}.

The uniqueness question to the DLR equations for $\Sine_\beta$ was raised in \cite{DHLM}, where a positive
answer was conjectured.  One possible approach, discussed in \cite{DHLM} and more recently explicitly
in \cite{LebleHDR}, would be to prove that every stationary DLR
solution minimises the free-energy functional of \cite{LS}, and then
invoke the uniqueness theorem of \cite{EHL}.  This would amount to a
version of the Gibbs variational principle for the singular long-range
logarithmic interaction, for which the classical theory does not apply
directly. It would be interesting if this approach could be carried out rigorously. We instead take a completely different route and prove uniqueness without using the free-energy
functional or its variational characterisation at all.

It is worth mentioning that for \(\beta=2\), uniqueness follows from the work of Kuijlaars and
Mi\~na-D\'iaz \cite{KuijlaarsMinaDiaz}, using the determinantal
structure and the Riemann--Hilbert method.  Neither ingredient is
available for general \(\beta\).  In recent joint work with Najnudel
\cite{AssiotisNajnudel}, we expressed all the correlation functions of
\(\Sine_\beta\) in terms of moments of the Hua--Pickrell stochastic
zeta function \cite{LiValkoCircularJacobi}.  Formal computations, under reasonable heuristics,
suggest that the conditional correlation functions of an arbitrary
DLR solution converge to these formulae as the conditioning interval
exhausts the line.  Proving this would require difficult uniform estimates which,
in \cite{AssiotisNajnudel}, rely on special structure of the
Verblunsky coefficients of the circular \(\beta\)-ensemble and are
unavailable for general DLR solutions.  We strongly believe, but cannot prove, that such
convergence of correlation functions is true. Fortunately, it is considerably stronger than what is
needed for uniqueness.

What we do instead is to compare directly the finite-volume
conditional law of a general stationary DLR solution with a suitably
matched circular \(\beta\)-ensemble. We say more about the argument now. Let \(I_L=[0,L]\).  Number
rigidity implies that, after conditioning on the exterior
configuration \(\bxi\), the conditional law in \(I_L\) is supported on
a single \(n\)-particle fibre.  We dilate the interval by the factor
\(n/L\), so that the resulting interval has length \(n\) and carries a
neutral configuration (i.e. same length as the number of particles), and compare the dilated conditional law with
the circular \(\beta\)-ensemble containing \(n\) particles on a circle
of circumference \(n\).  The two measures are thus Gibbs measures on
the same fixed-particle-number fibre and with respect to the same
binomial reference law.

A direct Gibbs comparison eliminates the unknown partition functions
and importantly produces cancellation of the leading bulk terms.  The remaining
errors are \(o(L)\); the most delicate is the exterior-interaction
term, which is controlled using exact neutrality together with the
sublinear discrepancy estimate supplied by the electric-energy
assumption.  After averaging over the exterior configuration, this
gives
\[
 \mathbb E_{\mathrm{exterior}}\!\left[
   \Ent\!\left(
     \text{neutralised conditional DLR law}
     \,\middle|\,
     \text{matched circular \(\beta\)-ensemble}
   \right)
 \right]
 =o(L).
\]
At this stage, Pinsker's inequality does not directly close the argument, since the right-hand
side is \(o(L)\), rather than \(o(1)\).  Instead, a cyclic averaging argument and a 
transport inequality distribute this entropy cost over the \(n\asymp L\) gaps, yielding
an \(o(1)\) comparison for each fixed rooted block.  The circular $\beta$-ensemble
local limit then identifies the Palm gap laws, and Palm inversion
identifies the original stationary point processes completing the proof. A detailed sketch of the proof is given in Section \ref{SubSectionStrategy}.

Of course, sub-extensive relative-entropy comparisons have a long
history in statistical mechanics, such as in Yau's dynamical
relative-entropy method
\cite{GuoPapanicolaouVaradhan,YauRelativeEntropy} and also in static
entropy-density and Gibbs-variational arguments
\cite{FoellmerEntropy,GeorgiiZessin,DereudreVariational}.
However, the particular comparison used here for the singular
logarithmic interaction is as far as I can tell new.  More broadly, one might
try to compare DLR conditional distributions with suitably chosen tractable
reference laws in higher dimensions or for other interactions, but
the specifics of our argument are essentially one-dimensional and such extensions would
require major new ingredients.

Finally, I believe this DLR characterisation may  be useful for proving convergence to
\(\Sine_\beta\) in pre-limit models with a natural Gibbs structure.
It would furthermore be interesting to connect the DLR equations
rigorously with the loop-equation hierarchy of Bourgade and Huang
\cite{BourgadeHuangLoop}; formally, the latter is obtained by
integration by parts in the finite-volume conditional Gibbs law.

\subsection{The DLR equations and main result}

Let \(\Conf(\R)\) be the space of locally finite integer-valued Radon
measures on \(\R\), with the Borel sigma-field generated by the vague
topology (equivalently, by
\(\bgamma\mapsto\int_{\R}f(x)\dd\bgamma(x)\), with \(f\) continuous and
compactly supported). For
\(\bgamma\in\Conf(\R)\), write \(\bgamma_{\mathscr A}\) for its restriction
to a Borel set \(\mathscr A\).  Also,
\(\delta_x\) is Dirac mass at \(x\), \(\delta_\varnothing\) is the
probability law concentrated at the empty configuration, and
\(0\log(0)=0\) throughout.

A point process is a Borel probability
measure on \(\Conf(\R)\), and \(\E_\bP\) denotes expectation under
a point process \(\bP\). Translation by \(t\) is given by
\((\theta_t\bgamma)(\mathscr A)=\bgamma(\mathscr A+t)\), and a point process
\(\bP\) is stationary when \(\bP\circ\theta_t^{-1}=\bP\) for every
\(t\in\R\).

The following is our introductory, slightly informal, definition of the DLR equations. Let \(\beta>0\), let \(\bP\) be a stationary point process, and let
\(\Lambda\subset\R\) be bounded and Borel.  Following
\cite[Theorems~1.1 and~2.1]{DHLM}, define the exterior weight, whenever the
limit exists as a positive finite quantity, by
\begin{equation}\label{EqIntroExteriorWeight}
 \boldsymbol{\omega}_\Lambda(x|\boldsymbol{\gamma}_{\Lambda^c})
 \defeq
 \lim_{p\to\infty}
 \prod_{\substack{u\in\bgamma_{\Lambda^c}\\ |u|\leq p}}
 \left|1-\frac{x}{u}\right|^\beta,
 \qquad x\in\Lambda,
\end{equation}
where atoms are counted with multiplicity.  For
\(n=\boldsymbol{\gamma}(\Lambda)\), put
\begin{align}
 Z_\Lambda(\bgamma_{\Lambda^c},n)
 &\defeq
 \int_{\Lambda^n}
 \prod_{1\leq j<k\leq n}|x_j-x_k|^\beta
 \prod_{j=1}^n
 \boldsymbol{\omega}_\Lambda(x_j|\boldsymbol{\gamma}_{\Lambda^c})
 \prod_{j=1}^n\dd x_j,
 \label{EqIntroPartition}\\
 \boldsymbol{\rho}_{\Lambda,\boldsymbol{\gamma}}(x_1,\ldots,x_n)
 &\defeq
 \frac{1}{Z_\Lambda(\bgamma_{\Lambda^c},n)}
 \prod_{1\leq j<k\leq n}|x_j-x_k|^\beta
 \prod_{j=1}^n
 \boldsymbol{\omega}_\Lambda(x_j|\boldsymbol{\gamma}_{\Lambda^c}).
 \label{EqIntroDensity}
\end{align}
We say that \(\bP\) satisfies the concrete canonical \(\beta\)-DLR equations
when, for every such \(\Lambda\), these objects are defined on a full
\(\bP\)-measure set, \(0<Z_\Lambda(\bgamma_{\Lambda^c},n)<\infty\), and
\begin{equation}\label{EqIntroDLR}
 \int_{\Conf(\R)}F(\bgamma)\bP(\dd\bgamma)
 =
 \int_{\Conf(\R)}\!\left[
 \int_{\Lambda^{\boldsymbol{\gamma}(\Lambda)}}
 F\left(\sum_{j=1}^{\boldsymbol{\gamma}(\Lambda)}\delta_{x_j}
       +\bgamma_{\Lambda^c}\right)
 \boldsymbol{\rho}_{\Lambda,\boldsymbol{\gamma}}
 \left(x_1,\ldots,x_{\boldsymbol{\gamma}(\Lambda)}\right)
 \prod_{j=1}^{\boldsymbol{\gamma}(\Lambda)}\dd x_j
 \right]\bP(\dd\bgamma)
\end{equation}
for every bounded Borel \(F\).

When \(\boldsymbol{\gamma}(\Lambda)=0\), the inner integral in
\eqref{EqIntroDLR} has its usual empty-product meaning.  The adjective canonical means that the resampling keeps the number of particles
in \(\Lambda\) fixed.  Thus, \eqref{EqIntroDLR} initially conditions on the
exterior configuration together with this number.  Once number rigidity is
known, i.e. the number of particles in \(\Lambda\) itself is exterior-measurable, the same formula is the
full exterior conditional law appearing in \cite[Theorem~1.1(C)]{DHLM}. In Section~\ref{SectionSetting} we give a more rigorous formulation which will be used in the actual theorem and proofs.

Our main result identifies the only stationary finite-energy process
compatible with the canonical conditional laws above as the \(\Sine_\beta\) point process
\cite{VV,DHLM}.

\begin{thm}\label{ThmMain}
For \(\beta>0\),   \(\Sine_\beta\) is the unique Borel probability
measure \(\bP\) on \(\Conf(\R)\) satisfying
\[
 \bP\circ\theta_t^{-1}=\bP\quad(t\in\R),\qquad
 \cW^{\mathrm{div}}(\bP)<\infty,
\]
and the canonical \(\beta\)-DLR equations of
Definition~\ref{DefDLREquations}.  Here \(\cW^{\mathrm{div}}\) is the
process-level divergence-compatible renormalised electric energy of
Definition~\ref{DefElectricEnergies}.
\end{thm}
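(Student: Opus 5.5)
The plan follows the strategy outlined in the introduction, in five steps. Existence, namely that \(\Sine_\beta\) itself is stationary, has \(\cW^{\mathrm{div}}(\Sine_\beta)<\infty\) and solves the canonical DLR equations, I would take from \cite{DHLM} together with the finite-energy results of \cite{LS,EHL}. The substance is uniqueness. Fix a stationary \(\bP\) with \(\cW^{\mathrm{div}}(\bP)<\infty\) solving the DLR equations.

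\textbf{Step 1: sublinear discrepancy and rigidity.} From \(\cW^{\mathrm{div}}(\bP)<\infty\), I would extract a sublinear discrepancy bound, \(\E_\bP|\bC(I_L)-L|=o(L)\), and more precisely control of the exterior field near \(I_L\). The DLR equations then give number rigidity on \(I_L\), so the conditional law given \(\bxi=\bgamma_{I_L^c}\) lives on a single fibre with \(n=\bC(I_L)\) particles.

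\textbf{Step 2: Gibbs comparison and the entropy bound.} Dilate \(I_L\) by \(n/L\) to get a neutral configuration on \([0,n]\). Compare its law \(\bQ^{\bxi}\) with the circular \(\beta\)-ensemble \(\bR_n\) of \(n\) points on a circle of circumference \(n\), both written as densities with respect to the same reference measure. The log-density ratio is
\[
\beta\Big[\textstyle\sum_{j<k}\big(\log|x_j-x_k|-\log|2\tfrac{n}{\pi}\sin(\pi(x_j-x_k)/n)|\big)+\sum_j\log\boldsymbol{\omega}(x_j|\bxi)\Big]
\]
plus constants. The partition functions are handled by writing \(\Ent(\bQ^\bxi|\bR_n)\le \E_{\bQ^\bxi}[\Phi]-\E_{\bR_n}[\Phi]\) for the non-constant part \(\Phi\); this is the Gibbs variational inequality, which makes the normalisations cancel. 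The interior mismatch between the line kernel and the chordal kernel is a smooth function except near the endpoints. It should contribute \(o(L)\), since the bulk contributions of the two measures match to leading order.

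The exterior term \(\sum_j\log\boldsymbol{\omega}(x_j|\bxi)\), recentred by its mean under the two measures, is where exact neutrality is used. Write the exterior potential as the potential of \(\bxi\) minus Lebesgue measure, plus a smooth correction. Its variation across \(I_L\), integrated against the difference of the empirical measures, is then bounded by the discrepancy of \(\bxi\) near the boundary times logarithmic factors. Averaging over \(\bxi\) and using Step 1 gives \(\E_{\bxi}\Ent(\bQ^\bxi|\bR_n)=o(L)\). I expect this exterior-interaction estimate, uniform enough to integrate over \(\bxi\), to be the main obstacle. Boundary layers of width \(o(L)\), where the potential is singular, will have to be cut off using a priori local energy bounds obtained from \(\cW^{\mathrm{div}}\) and stationarity.

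\textbf{Step 3: from \(o(L)\) entropy to \(o(1)\) per block.} Fix \(k\) and a bounded Lipschitz test function \(f\) of \(k\) consecutive gaps rooted at a particle. Average over the cyclic shifts of the root among the \(n\) particles. Since \(\bR_n\) is rotation invariant, subadditivity of entropy over roughly \(n/k\) disjoint blocks, combined with a transport (Pinsker- or Talagrand-type) inequality applied blockwise, should give
\[
\Big|\tfrac1n\sum_{i}\E_{\bQ^\bxi}f(\text{gaps}_{i..i+k})-\E_{\bR_n}f\Big|\lesssim\sqrt{\Ent/n}+o(1)=o(1).
\]

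\textbf{Step 4: identification.} Let \(L\to\infty\). Stationarity of \(\bP\) converts the left-hand average into the Palm gap law of \(\bP\), with the dilation factor \(n/L\to1\) by Step 1. The right-hand side converges to the Palm gap law of \(\Sine_\beta\) by the circular \(\beta\)-ensemble local limit \cite{VV}. Hence the two Palm measures agree. Palm inversion (the Ryll-Nardzewski/Slivnyak correspondence) then identifies \(\bP=\Sine_\beta\).
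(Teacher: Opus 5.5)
\textbf{The main gap is your Step 3, which does not work as stated.} The inequality you need, $\sum_b\Ent(\rho_b\mid\mu_b)\leq\Ent(\rho\mid\mu)$ over $\asymp n/k$ disjoint gap blocks, holds when the reference measure is a product over those blocks. The circular $\beta$-ensemble is a long-range interacting system whose gap blocks are strongly correlated. Data processing therefore only gives $\Ent(\rho_b\mid\mu_b)\leq\Ent(\rho\mid\mu)=o(L)$ for each block separately, and blockwise Pinsker then yields nothing. A Talagrand inequality with an $n$-independent constant on the full gap vector is not available from convexity either, because the only curvature lower bound, $\mathsf g_n''(x_i)\geq x_i^{-2}$, degenerates on large gaps.

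The missing idea is structural. In uniformly rooted cyclic-gap coordinates on $\mathcal S_n^\circ$, the circular ensemble is the Gibbs law $\mu_n\propto\mathrm e^{-V_n}$ with $V_n(x)=\tfrac{\beta}{2}\sum_i\sum_{k=1}^{n-1}\mathsf g_n(x_i+\cdots+x_{i+k-1})$, which is convex because $\mathsf g_n''(r)\geq r^{-2}$. The above-tangent transport inequality of Cordero-Erausquin and Bobkov--Ledoux applies to Moreau-envelope regularisations of $V_n$ (needed since $V_n$ blows up at $\partial\mathcal S_n^\circ$). It gives a coupling $(\mathsf X,\mathsf Y)$ of $\mu_n$ with the candidate gap law whose expected Bregman divergence $V_n(\mathsf Y)-V_n(\mathsf X)-\nabla V_n(\mathsf X)\cdot(\mathsf Y-\mathsf X)$ is at most the relative entropy. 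The two nearest-neighbour terms bound that divergence below by the \emph{additive} cost $\beta\sum_i\psi(\mathsf Y_i/\mathsf X_i)$, with $\psi(q)=q-1-\log q$. The additivity thus comes from the cost, not from independence in the reference law. Cyclic symmetrisation then gives $\E\,\psi(\mathsf Y_i/\mathsf X_i)\leq\Ent/(\beta n)=o(1)$ for each gap, and $\E\mathsf X_i=1$ together with a cutoff converts this into control of every fixed block of gaps.

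\textbf{Your Step 1 also aims at the wrong scale.} The bound $\E_\bP|\sD_L|=o(L)$ is only a law of large numbers. The $o(L)$ entropy bound requires the sublinear number variance $v(L)=\E_\bP[\sD_L^2]=o(L)$, together with $v(s)\leq C_\bP s$ at every scale; these are imported, with rigidity, from \cite{PS,LS,DHLM}. They enter every nontrivial term of your Gibbs comparison:
\begin{enumerate}
\item After the exact dilation identity, the bulk mismatch leaves the statistic $\frac{\sD_L}{\sN_L}\mathcal A_{\sN_L}(\bZ_L)$, whose mean absolute value is at most $C\sqrt{Lv(L)}$.
\item It also leaves the periodic error $\mathsf{Err}^{\mathrm{per}}_{\sN_L}(\bZ_L)$, a quadratic form in the bridge $\sD_s-\tfrac sL\sD_L$, which is $o(L)$ in mean because $q_L(Lx)/L\to0$.
\item The exterior term, after Stieltjes integration by parts against the exactly neutral replacement charge, becomes $\int_0^L\int_0^\infty\sD_{\mathrm{out}}(t)\sS(s)(s+t)^{-2}\,\dd t\,\dd s$. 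This is $o(L)$ precisely because $\E[\sS(s)^2]\leq Cs$ and $v(t)=o(t)$.
\end{enumerate}
With only $\sD_{\mathrm{out}}(t)=o(t)$, even the tail $\int^\infty|\sD_{\mathrm{out}}(t)|(s+t)^{-2}\,\dd t$ need not be finite. Conversely, once square-root-scale bounds are available, no boundary-layer cutoff is needed, since the kernel $\sqrt{st}/(s+t)^2$ is integrable up to the endpoints.

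Your Step 4 is right in outline. However, Palm convergence of the circular lifts needs uniform integrability of local counts beyond the unrooted local limit, and the cyclic seam and random dilation must be removed explicitly.
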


We note that if one drops the finite renormalised-electric energy condition \(\cW^{\mathrm{div}}(\bP)<\infty\) natural counterexamples exist, see Section \ref{SectionCounterexamples}.

\subsection{Strategy of proof}\label{SubSectionStrategy}

We give a roadmap to the proof and we also point to Figure \ref{FigProofArchitecture} which explains the general architecture of the proof rather faithfully. We will introduce quite a bit of notation, some informally; each such item will be recalled at its first technical use.  Let \(\bP\) satisfy the hypotheses of
Theorem~\ref{ThmMain}.  We write
\begin{equation*}
\bC\colon\Conf(\R)\to\Conf(\R), \ \bC(\bgamma)=\bgamma,
\end{equation*}
for the
canonical configuration.  For \(s>0\), put
\[
 I_s=[0,s],\qquad I_0=\varnothing,\qquad
 \sN_s=\bC(I_s),\qquad
 \sD_s=\sN_s-s,\qquad v(s)=\E_{\bP}[\sD_s^2],
\]
and write \(T_a(x)=ax\) for \(a>0\).  Let \(\balpha_L\) be the law of
the exterior restriction \(\bC_{I_L^c}\).  The symbol \(\Ent\)
 denotes the standard relative entropy and \(W_I^{\intE}\) the intrinsic ordered energy, namely for a simple configuration
\(\betaeta=\sum_{i=1}^n\delta_{x_i}\),
\[
W_I^{\intE}(\betaeta)
\defeq
-2\sum_{1\leq i<j\leq n}\log|x_i-x_j|
+2\sum_{i=1}^n\int_I\log|x_i-y|\dd y
-\iint_{I^2}\log|x-y|\dd x\dd y,
\]
The structural input from the series of papers \cite{LS,EHL,DHLM} gives number rigidity for random configuration distributed according to $\bP$: conditionally on the exterior
of \(I_L\), the number of particles inside is the exterior-measurable
integer \(n_L(\bxi)\).  Write \(K_{L,\bxi}\) for this conditional law.
Section~\ref{SectionCanonicalFibre} constructs, for
\(\balpha_L\)-almost every \(\bxi\), a finite exterior potential
\(A_{\bxi}\) on the fixed particle-number fibre.  Namely, if we put
\(\Lambda_p=[-p/2,p/2]\), \(n=n_L(\bxi)\), and write
\(
 \betaeta=\sum_{i=1}^n\delta_{x_i},
 \br_n=\sum_{i=1}^n\delta_{a_i}
\)
for a fixed deterministic reference configuration, then
\[
 A_{\bxi}(\betaeta)
 =
 -\limsup_{\substack{p\to\infty\\p\in\N}}
 \sum_{y\in\bxi\cap(\Lambda_p\setminus I_L)}
 \log\left|
 \frac{\prod_{i=1}^n(x_i-y)}
      {\prod_{i=1}^n(a_i-y)}
 \right|.
\]
Changing \(\br_n\) changes \(A_{\bxi}\) only by an additive constant, which
cancels from the following normalised formula.
For \(n=n_L(\bxi)\geq1\) and every bounded Borel \(F\), we can write $K_{L,\bxi}(\dd\betaeta)$ explicitly,
\begin{equation}\label{EqStrategyConditionalCoordinates}
 \int_{\Conf(I_L)}F(\betaeta)K_{L,\bxi}(\dd\betaeta)
 \propto\displaystyle\int_{I_L^n}
 F\!\left(\sum_i\delta_{x_i}\right)
 \prod_{i<j}|x_i-x_j|^\beta
 \exp\!\left[-\beta A_{\bxi}\left(\sum_i\delta_{x_i}\right)\right]
 \prod_i\dd x_i.
\end{equation}
This is simply the particle-coordinate form of the gauge-invariant kernel from
Definition~\ref{DefDLREquations}. Here gauge invariance means that adding to \(A_{\bxi}\) a constant which may
depend on \(\bxi\), but not on the interior configuration, multiplies the
numerator and denominator by the same factor and hence leaves the normalised
conditional law unchanged. We neutralise (here and below, neutrality means that the relevant signed charge has total
mass zero; for a configuration \(\bzeta\) on an interval \(I\), this means
\((\bzeta-\mathrm{Leb}_I)(I)=0\), or equivalently \(\bzeta(I)=|I|\)) the conditional configuration by dilating \(I_L\) to the
interval \(I_n\), whose length equals its particle number:
\[
 \nu_{L,\bxi}=(T_{n/L})_*K_{L,\bxi}.
\]
On the empty sector set
\(\nu_{L,\bxi}=\delta_\varnothing\).
Thus, in the same particle coordinates (here $n=n_L(\bxi)$ is treated as fixed; otherwise we can view as integrals over $\Conf(\R)$),
\[
 \int_{\Conf(I_n)}F(\bzeta)\nu_{L,\bxi}(\dd\bzeta)
 \propto\displaystyle\int_{I_L^n}
 F\!\left(\sum_i\delta_{n x_i/L}\right)
 \prod_{i<j}|x_i-x_j|^\beta
 \exp\!\left[-\beta A_{\bxi}\left(\sum_i\delta_{x_i}\right)\right]
 \prod_i\dd x_i.
\]
Recall, the reference law is the circular \(\beta\)-ensemble on the circle of
circumference \(n\), $\bQ_{n,\beta}$ (this convention, used from now on, is slightly more convenient than the centered one used in the introductory part).  Explicitly, for \(n\geq1\),
\begin{equation}\label{EqStrategyCircularCoordinates}
 \int_{\Conf(I_n)}F(\bzeta)\bQ_{n,\beta}(\dd\bzeta)
\propto
 \displaystyle\int_{I_n^n}
 F\!\left(\sum_{i=1}^n\delta_{u_i}\right)
 \prod_{1\leq i<j\leq n}
 \left|2\sin\left(\frac{\pi(u_i-u_j)}n\right)\right|^\beta
 \prod_{i=1}^n\dd u_i.
\end{equation}
Set \(\bQ_{0,\beta}=\delta_\varnothing\).
The following relative entropy estimate that compares (in an averaged sense) $\nu_{L,\bxi}$ and $\bQ_{n_L(\bxi),\beta}$ is the key ingredient:
\begin{equation}\label{EqStrategyCircularEntropy}
 \mathcal H_L^{\mathrm{circ}}
 \defeq
 \int_{\Conf(I_L^c)}
 \Ent(\nu_{L,\bxi}|\bQ_{n_L(\bxi),\beta})
 \balpha_L(\dd\bxi)=o(L).
\end{equation}
Here and throughout, \(t_L=o(L)\) means that
\(\lim_{L\to\infty}t_L/L=0\).  We will explain how this estimate is proved
but first we explain how it completes the proof.

We note that the estimate \(\mathcal H_L^{\mathrm{circ}}=o(L)\) does not close the
argument directly through Pinsker's inequality, which would yield
total-variation convergence of the full laws only from an \(o(1)\) entropy
bound.  A suitable speed-\(n\) large-deviation principle for the empirical
field of \(\bQ_{n,\beta}\), together with uniqueness of its minimiser, would
instead transfer exponential concentration from the circular ensemble to the
candidate DLR solution law by a standard entropy inequality argument, since \(n\asymp L\) and
the entropy cost is \(o(L)\).  The large-deviation principle of
\cite{LS} is not stated in the circular periodic geometry required here and so does not immediately apply (although a technical adaptation must surely be possible, we do not pursue it).
The cyclic-gap argument below is more specialised and perhaps less intuitive,
but it is essentially elementary and completely self-contained.

Returning to the argument, put
\(p_{L,n}=\bP(\sN_L=n)\), and, when \(p_{L,n}>0\), average over all
exteriors in the same sector:
\begin{equation}\label{EqStrategyFixedSectorAverage}
 \overline\nu_{L,n}
 =\frac1{p_{L,n}}
 \int_{\{n_L(\bxi)=n\}}\nu_{L,\bxi}\balpha_L(\dd\bxi).
\end{equation}
For \(n\geq1\), write
\(x_1(\bgamma)<\cdots<x_n(\bgamma)\) for the ordered atoms of the simple
restriction \(\bgamma_{I_L}\).  Its particle-coordinate form is
\[
 \int_{\Conf(I_n)}F(\bzeta)\overline\nu_{L,n}(\dd\bzeta)
 =\frac1{p_{L,n}}
 \int_{\{\bgamma(I_L)=n\}}
 F\!\left(\sum_{i=1}^n\delta_{n x_i(\bgamma)/L}\right)\bP(\dd\bgamma).
\]
Thus \(\overline\nu_{L,n}\) is precisely the law of \(\bZ_L\), conditional
on \(\sN_L=n\), where
\(\bZ_L=(T_{\sN_L/L})_*\bC_{I_L}\) on \(\{\sN_L\geq1\}\)
and \(\bZ_L=\varnothing\) on the empty sector; on the zero-particle sector
\(\overline\nu_{L,0}=\delta_\varnothing\), and
when \(p_{L,n}=0\), set \(\overline\nu_{L,n}=\bQ_{n,\beta}\).  Convexity
of entropy gives the estimate
\begin{equation}\label{EqStrategyFixedSectorEntropy}
 \sum_{n\geq0}p_{L,n}
 \Ent(\overline\nu_{L,n}|\bQ_{n,\beta})
 \leq\mathcal H_L^{\mathrm{circ}}=o(L).
\end{equation}
The structural input also gives \(v(L)=o(L)\), so Chebyshev's inequality
shows that
\[
 \sum_{\{n<L/2\text{ or }n>2L\}}p_{L,n}
 \leq\frac{4v(L)}{L^2}=o(1).
\]

For \(n\geq1\), identify the endpoints of \(I_n\).  If
\(\bzeta=\sum_{i=1}^n\delta_{u_i}\), with
\(0<u_1<\cdots<u_n<n\), put \(u_{i+n}=u_i+n\), average uniformly over
\(j\in\{1,\ldots,n\}\), and record
\[
 (u_{j+1}-u_j,\ldots,u_{j+n}-u_{j+n-1})
 \in\mathcal S_n^\circ,
 \qquad
 \mathcal S_n^\circ=\left\{x\in(0,\infty)^n:\sum_{i=1}^nx_i=n\right\}.
\]
Let \(\rho_{L,n}\) and \(\mu_n\) be the resulting cyclic-gap laws under
\(\overline\nu_{L,n}\) and \(\bQ_{n,\beta}\), respectively.  Choosing
the root uniformly and then recording the corresponding cyclic-gap
vector cannot increase relative entropy.  Hence
\[
 \Ent(\rho_{L,n}\mid\mu_n)
 \leq
 \Ent(\overline\nu_{L,n}\mid\bQ_{n,\beta}),
\]
and the right-hand side is controlled after averaging over \(n\) by
\eqref{EqStrategyFixedSectorEntropy}.
Now, for a differentiable convex function \(\mathsf{V}\), recall that its Bregman divergence
from \(x\) to \(y\) is
\[
 D_\mathsf{V}(y,x)
 \defeq
 \mathsf{V}(y)-\mathsf{V}(x)-\nabla \mathsf{V}(x)\mathbin{\cdot}(y-x)\geq0.
\]
It measures the amount by which \(\mathsf{V}(y)\) lies above the tangent plane to
\(\mathsf{V}\) at \(x\).  The Bregman divergence of the circular gap potential
controls
\[
 \beta\sum_i\psi(y_i/x_i),
 \qquad
 \psi(q)=q-1-\log(q).
\]
Using this Bregman-divergence lower bound and adapting standard
transport inequalities from \cite{CETransport,BLTransport} to the
circular gap potential, we prove in Section~\ref{SectionPalm} that
there exists a cyclically invariant coupling \(\pi_{L,n}\) of random
gap vectors
\((\mathsf X^{L,n},\mathsf Y^{L,n})\), with respective laws
\(\mu_n\) and \(\rho_{L,n}\), such that
\begin{equation}\label{EqStrategyOneGap}
 \sum_{\substack{n\geq3\\L/2\leq n\leq2L}}p_{L,n}
 \E_{\pi_{L,n}}\!\left[
 \psi\left(\frac{\mathsf Y_1^{L,n}}{\mathsf X_1^{L,n}}\right)\right]
 \leq\frac{2\mathcal H_L^{\mathrm{circ}}}{\beta L}=o(1)
\end{equation}
and cyclic invariance gives the same bound for any sectorwise choice
\(i_n\in\{1,\ldots,n\}\). Since
\(\E_{\pi_{L,n}}[\mathsf X_i^{L,n}]=1\), a cutoff and the fact that \(\psi(q)\) is strictly positive away from \(q=1\) turn ratio control into absolute gap control. Note that, this is the decisive gain furnished by cyclic averaging.  Although
\(n\asymp L\) also tends to infinity, cyclic symmetry distributes the
averaged total cost \(\mathcal H_L^{\mathrm{circ}}=o(L)\) among
\(n\asymp L\) gaps, yielding \(o(1)\) control of any prescribed gap. We stress that we do not prove, or need for the subsequent argument, a comparison of the full growing gap vector. Then, a  union bound
couples every fixed block of gaps on both sides of the uniform root.  Tightness
of the reference rooted laws turns this fixed-block coupling into a coupling
in the local vague topology.

Now, recall that if \(\bQ\) is a stationary simple point process of intensity one, its
reduced Palm law \(\bQ^{!0}\) (the root particle is
deleted; this is what the symbol \({!0}\) is supposed to denote) is characterised by
\[
 \int_{\Conf(\R)}\sum_{x\in\bgamma}
 h\bigl(x,\theta_x(\bgamma-\delta_x)\bigr)\bQ(\dd\bgamma)
 =\int_\R\int_{\Conf(\R)}h(x,\betaeta)
 \bQ^{!0}(\dd\betaeta)\dd x
\]
for every non-negative Borel function
\(h:\R\times\Conf(\R)\to[0,+\infty]\). Returning to our setup, periodically reconstructing the reduced rooted configuration from the
reference gaps gives the reduced Palm law of the stationary periodic lift of
\(\bQ_{n,\beta}\).  It can be shown that this law converges to
\((\Sine_\beta)^{!0}\).
On the other hand for the DLR solution $\bP$, the corresponding ordinary, non-periodised rooted law is
\begin{equation}\label{EqActualRootLaw}
 \begin{split}
 \mathfrak R_L(F)=\bP(\sN_L=0)F(\varnothing)+\E_{\bP}\!\left[
 \1_{\{\sN_L>0\}}\frac1{\sN_L}
 \sum_{x\in\bC\cap I_L}
 F\!\left((T_{\sN_L/L})_*
 \theta_x(\bC_{I_L}-\delta_x)\right)\right].
 \end{split}
\end{equation}
Campbell's
formula, the fact that \(\sN_L/L\to1\) also under the probability law obtained by selecting the root through the Campbell measure, and some technical work show that \(\mathfrak R_L\) converges to
\(\bP^{!0}\).  The gap coupling explained above then shows that the same law converges to
\((\Sine_\beta)^{!0}\).  Hence, we can conclude
\[
 \bP^{!0}=(\Sine_\beta)^{!0}.
\]
Since the processes are stationary, simple and have the same intensity one, Palm inversion
gives \(\bP=\Sine_\beta\) which completes the proof.

We now explain how the key relative entropy estimate
\eqref{EqStrategyCircularEntropy} is proved.  The structural input collected
in Proposition~\ref{PropInputs} gives
\begin{equation}\label{EqStrategyVariance}
 \E_{\bP}[\sN_L]=L,\qquad
 v(L)\leq C_{\bP}L,\qquad v(L)=o(L).
\end{equation}
Put \(b=\beta/2\).  For \(n\geq1\), dilate the circular ensemble back to
\(I_L\),
\[
 \bR_{n,L}=(T_{L/n})_*\bQ_{n,\beta},
 \qquad \bR_{0,L}=\delta_\varnothing.
\]
When \(n=n_L(\bxi)\geq1\), the laws \(K_{L,\bxi}\) and
\(\bR_{n,L}\) are Gibbs measures with respect to the same binomial law on
\(\Conf_n(I_L)\).  
Sections~\ref{SectionCanonicalFibre} and
\ref{SectionCircularRecovery} identify their potentials, up to fibrewise
constants, as follows
\[
 V_K(\betaeta)
 =bW_{I_L}^{\intE}(\betaeta)
  +\beta\widehat A_{\bxi}(\betaeta),
 \qquad
 V_R(\betaeta)
 =bW_n^{\per}\bigl((T_{n/L})_*\betaeta\bigr),
\]
where with \(\betaeta=\sum_{i=1}^n\delta_{x_i}\), one has
\begin{align*}
 \widehat A_{\bxi}(\betaeta)
 &=
 A_{\bxi}(\betaeta)
 +\sum_{i=1}^n
 \left[
   L-x_i\log x_i-(L-x_i)\log(L-x_i)
 \right],\\
 W_n^{\per}\bigl((T_{n/L})_*\betaeta\bigr)
 &=
 -2\sum_{1\leq i<j\leq n}
 \log\left|
 2\sin\left(\frac{\pi(x_i-x_j)}{L}\right)
 \right|
 +n\log\left(\frac{n}{2\pi}\right).
\end{align*}
On the empty sector both laws are \(\delta_\varnothing\), so the entropy and
all comparison terms vanish. For two Gibbs laws on the same fibre, with integrable potential difference
\(\Phi=V_R-V_K\), Jensen's inequality gives
\begin{equation}\label{EqEntropyIntroStrategyInequality}
 \Ent(K_{L,\bxi}|\bR_{n,L})
 \leq
 \int_{\Conf_n(I_L)}
\int_{\Conf_n(I_L)}
[\Phi(\bX)-\Phi(\bY)]
 K_{L,\bxi}(\dd\bX)\bR_{n,L}(\dd\bY).
\end{equation}
For \(\bzeta\in\Conf_n(I_n)\), define the centered linear statistic
\[
 \mathcal A_n(\bzeta)
 \defeq
 \int_{I_n}
 \left[n-u\log u-(n-u)\log(n-u)\right]
 \dd(\bzeta-\mathrm{Leb}_{I_n})(u),
\]
with \(0\log0=0\). It is the centred linear statistic generated by the change in the background measure under the dilation from \(I_L\) to \(I_n\). Now, the exact dilation identity of Section~\ref{SectionNeutralisation} gives, with
\(D=n-L\),
\[
 \begin{aligned}
 W_n^{\per}\bigl((T_{n/L})_*\betaeta\bigr)-W_{I_L}^{\intE}(\betaeta)
 &=-\mathsf{Err}_n^{\per}\bigl((T_{n/L})_*\betaeta\bigr)
   -\frac{2D}{n}\mathcal A_n\bigl((T_{n/L})_*\betaeta\bigr)+c_{n,L},\\
 c_{n,L}&=D^2\log(L)-\frac32D^2-n\log(L/n),
 \end{aligned}
\]
where \(c_{n,L}\) is constant on the fibre and hence cancels from the
comparison. This is key as we cannot really control this quantity on its own.  Rotation invariance of the circular ensemble moreover gives 
\(\bQ_{n,\beta}(\mathcal A_n)=0\).  On the other hand, writing
\(\mathsf{Br}_L(s)=\sD_s-(s/L)\sD_L\), Stieltjes integration by parts and
stationarity give, respectively, the pathwise representation and the second-moment bound for \(\mathsf{Br}_L(s)\)
\[
 \frac{\sD_L}{\sN_L}\mathcal A_{\sN_L}(\bZ_L)
 =-\frac{\sD_L}{L}\int_0^L
 \mathsf{Br}_L(s)\log\left(\frac{L-s}{s}\right)\dd s,
 \qquad
 \E_{\bP}[\mathsf{Br}_L(s)^2]
 \leq C_{\bP}\frac{s(L-s)}L,
\]
with the quotient set equal to zero on the empty sector.  Consequently,
\[
 \E_{\bP}\!\left[
  \left|\frac{\sD_L}{\sN_L}
  \mathcal A_{\sN_L}(\bZ_L)\right|
 \right]
 \leq C\sqrt{Lv(L)}=o(L).
\]

We now turn our attention to exterior contribution and note that it is never averaged as a difference of two
separately chosen gauges.  Indeed, \(\widehat A_{\bxi}\) is defined only
up to an additive constant depending on \(\bxi\), so we first form its
difference at two configurations in the same particle-number fibre, where
this constant cancels, and only then average over the exterior.  For two interior configurations \(\bX,\betaeta\) in the same
particle-number fibre, define the gauge-invariant boundary difference
\[
 \sB_{I_L}(\betaeta,\bX,\bxi)
 \defeq
 \widehat A_{\bxi}(\betaeta)-\widehat A_{\bxi}(\bX).
\]Thus, if
\(\bX_L\) has conditional law \(K_{L,\bxi}\) and \(\bY_L\) has law
\(\bR_{n,L}\), independently given the exterior, then
\[
 \bR_{n,L}(\widehat A_{\bxi})
 -K_{L,\bxi}(\widehat A_{\bxi})
 =
 \E\!\left[
 \sB_{I_L}(\bY_L,\bX_L,\bxi)\mid\bxi
 \right].
\]
Write \(\bP_L^{\mathrm{rep}}\) for the resulting joint law of
\((\bC,\bY_L)\).  Under this law, \(\bX_L\) and \(\bY_L\) have the same
rigid particle number \(n\), and hence the replacement charge is exactly
neutral:
\[
 (\bY_L-\bX_L)(I_L)=n-n=0.
\]
This neutrality cancels configuration-independent constants and importantly the leading
far-field contribution to the exterior interaction. The Stieltjes
representation in Section~\ref{SectionBoundary}, together with the variance
bounds in \eqref{EqStrategyVariance}, yields
\[
 \E_{\bP_L^{\mathrm{rep}}}
 \left[|\sB_{I_L}(\bY_L,\bX_L,\bC_{I_L^c})|\right]=o(L).
\]

Finally, put
\[
 \mathsf{Err}_n^{\mathrm{per}}
 =W_{I_n}^{\intE}-W_n^{\mathrm{per}},
 \qquad \mathsf{Err}_0^{\mathrm{per}}(\varnothing)=0.
\]
An elementary technical computation gives an exact integral formula for \( \mathsf{Err}_n^{\mathrm{per}}\) and further estimates give the following periodic-to-intrinsic comparison under the circular ensemble
\[
 \E_{\bQ_{n,\beta}}[|\mathsf{Err}_n^{\mathrm{per}}|]
 \leq C_\beta\log^2(2+n).
\]
With this aforementioned exact formula as input plugging in the actual neutralised restriction
Lemma~\ref{LemActualPeriodicError} gives us the following
\begin{equation}\label{EqStrategyActualPeriodicError}
 \E_{\bP}\!\left[
  |\mathsf{Err}_{\sN_L}^{\mathrm{per}}(\bZ_L)|
 \right]=o(L).
\end{equation}
Averaging the fixed-fibre comparison entropy inequality \eqref{EqEntropyIntroStrategyInequality} and taking absolute values therefore
gives
\begin{equation}\label{EqStrategyDirectComparison}
 \begin{aligned}
 \mathcal H_L^{\mathrm{circ}}
 \leq{}&
 b\E_{\bP}[|\mathsf{Err}_{\sN_L}^{\mathrm{per}}(\bZ_L)|]
 +b\sum_{n\geq0}p_{L,n}
   \E_{\bQ_{n,\beta}}[|\mathsf{Err}_n^{\mathrm{per}}|]\\
 &+2b\E_{\bP}\!\left[
   \left|\frac{\sD_L}{\sN_L}
   \mathcal A_{\sN_L}(\bZ_L)\right|\right]
 +\beta\E_{\bP_L^{\mathrm{rep}}}
 [|\sB_{I_L}(\bY_L,\bX_L,\bC_{I_L^c})|].
 \end{aligned}
\end{equation}
Finally we can easily bound,
\[
 \sum_{n\geq0}p_{L,n}
 \E_{\bQ_{n,\beta}}[|\mathsf{Err}_n^{\mathrm{per}}|]
 \leq C_\beta\E_{\bP}[\log^2(2+\sN_L)]
 \leq C_\beta(1+\sqrt L)=o(L).
\]
Hence, all the terms on the right-hand side of
\eqref{EqStrategyDirectComparison} are thus \(o(L)\) and this proves
\eqref{EqStrategyCircularEntropy}.

\begin{figure}[p]
\centering
\resizebox{.97\linewidth}{!}{%
\begin{tikzpicture}
\node[dlrnode,fill=white,text width=4.5cm] (hyp) at (0,0)
 {Candidate \(\bP\)\\stationary canonical \(\beta\)-DLR,\\
  \(\cW^{\mathrm{div}}(\bP)<\infty\)};

\node[energynode,fill=white,text width=4.4cm] (inputs) at (0,-1.9)
 {Structural inputs\\rigidity, intensity one,\\
  \(v(L)\leq C_{\bP}L\), \(v(L)=o(L)\)};

\node[entropynode,fill=white,text width=3.7cm] (fibre) at (-5.3,-4.0)
 {Direct Gibbs comparison\\same rigid particle-number fibre};
\node[boundarynode,text width=3.7cm] (affine) at (0,-4.0)
 {Affine dilation statistic\\
  \(\E|\sD_L\mathcal A_{\sN_L}(\bZ_L)/\sN_L|
  \leq C\sqrt{Lv(L)}=o(L)\)};
\node[boundarynode,text width=3.7cm] (boundary) at (5.3,-4.0)
 {Gauge-invariant boundary term\\neutral Stieltjes error \(o(L)\)};

\node[boundarynode,text width=4.0cm] (referr) at (-2.7,-6.1)
 {Circular periodic error\\
  \(\E_{\bQ_{n,\beta}}|\mathsf{Err}^{\mathrm{per}}_n|
  \leq C_\beta\log^2(2+n)\)};
\node[boundarynode,text width=4.0cm] (acterr) at (2.7,-6.1)
 {Candidate periodic error\\
  \(\E_{\bP}|\mathsf{Err}^{\mathrm{per}}_{\sN_L}(\bZ_L)|=o(L)\)};

\node[dlrnode,text width=4.8cm] (bridge) at (0,-8.3)
 {Relative entropy estimate\\
  \(\int_{\Conf(I_L^c)}\Ent(\nu_{L,\bxi}|\bQ_{n_L(\bxi),\beta})
    \balpha_L(\dd\bxi)=o(L)\)};

\node[recoverynode,text width=4.4cm] (gaps) at (0,-10.4)
 {Fixed-sector mixing and cyclic-gap transport\\
  agreement of every fixed rooted gap block};

\node[recoverynode,text width=3.9cm] (campbell) at (-3.15,-12.5)
 {Campbell identification\\candidate roots converge to \(\bP^{!0}\)};
\node[recoverynode,text width=3.9cm] (vv) at (3.15,-12.5)
 {Circular local limit and uniform integrability\\
  circular roots converge to \((\Sine_\beta)^{!0}\)};

\node[conclusionnode,fill=white,text width=4.6cm] (palm) at (0,-14.6)
 {Palm equality and inversion\\
  \(\bP^{!0}=(\Sine_\beta)^{!0}\), hence \(\bP=\Sine_\beta\)};

\draw[proofarrow] (hyp)--(inputs);
\draw[proofarrow] (inputs)--(fibre);
\draw[proofarrow] (inputs)--(affine);
\draw[proofarrow] (inputs)--(boundary);
\draw[proofarrow]
 ($(inputs.south)!0.30!(inputs.south east)$)
 --($(acterr.north)+(0,2.4)$)--(acterr.north);
\draw[proofarrow] (fibre.west)--++(-.45,0)|-(bridge.west);
\draw[proofarrow] (affine)--(bridge);
\draw[proofarrow] (boundary.east)--++(.45,0)|-(bridge.east);
\draw[proofarrow]
 (referr.south)--($(bridge.north)!0.55!(bridge.north west)$);
\draw[proofarrow]
 (acterr.south)--($(bridge.north)!0.55!(bridge.north east)$);
\draw[proofarrow] (bridge)--(gaps);
\draw[proofarrow] (gaps)--(campbell);
\draw[proofarrow] (gaps)--(vv);
\draw[proofarrow] (campbell)--(palm);
\draw[proofarrow] (vv)--(palm);
\end{tikzpicture}%
}
\caption{This is a  cartoon of the proof architecture.  To prove the main
relative entropy estimate, shaded in blue, we use a direct Gibbs comparison
on the same rigid particle-number fibre between the conditional law
\(K_{L,\bxi}\) of the candidate configuration in \(I_L\) and the rescaled
circular reference law \(\bR_{n,L}\), where \(n=n_L(\bxi)\).  The four main
estimates entering this comparison are shaded in red; the technically most
delicate is the estimate of the gauge-invariant boundary interaction term.
The principal quantitative input on the candidate side is the sublinear
discrepancy estimate \(v(L)=o(L)\), while the circular periodic error is
controlled directly under \(\bQ_{n,\beta}\).  Finally, the relative entropy
estimate feeds into the cyclic-gap transport and Palm-identification
argument, whose main components are shaded in green, which completes the proof.}
\label{FigProofArchitecture}
\end{figure}

\paragraph{Acknowledgements} In chronological order, I am grateful to Sasha Bufetov for first telling me about conditional measures of determinantal point processes and DLR equations, I am grateful to Arno Kuijlaars for explaining the $\beta=2$ proof of uniqueness to me, I am grateful to Kohei Suzuki, Thomas Lebl\'{e}, David Dereudre, Martin Huesmann for mentioning the general $\beta$ uniqueness problem to me. I am particularly grateful to Kohei Suzuki for our collaboration on diffusions on configuration space.

\paragraph{AI disclosure} I have made use of AI tools (ChatGPT 5.5, 5.6 Sol) for  some exploration of ideas but mainly for help with computations and abstract measure theoretic arguments, literature review and editing of the manuscript. I have verified all mathematical arguments and modified them in the course of the past months. Any remaining mistakes are my own responsibility.

\section{Setting and imported results}\label{SectionSetting}

This section fixes the rigorous configuration, DLR, entropy, and electric
energy framework and recalls the fundamental imported results from the literature that we use.

\subsection{Configuration space and the rigorous DLR formulation}

We now supply the measure-theoretic formulation behind
\eqref{EqIntroExteriorWeight}--\eqref{EqIntroDLR}.  For a Borel set
\(\mathscr A\subset\R\), put
\[
 \Conf(\mathscr A)\defeq
 \{\bgamma\in\Conf(\R):\bgamma(\mathscr A^c)=0\},
\]
with the sigma-field inherited from \(\Conf(\R)\).  A configuration \(\bgamma\) is simple if
\(\bgamma(\{x\})\leq1\) for every \(x\in\R\).  We use
\[
 \mathsf{g}(x)\defeq-\log(|x|)\quad(x\neq0),\qquad \mathsf{g}(0)\defeq0,
 \qquad
 \Delta\defeq\{(x,x):x\in\R\}.
\]
The assigned value \(\mathsf{g}(0)=0\) is immaterial because the atomic diagonal is
removed, and the diagonal has zero Lebesgue measure in the remaining terms.
For a bounded Borel set \(A\subset\R\), write \(|A|\) for its Lebesgue
measure, \(\mathrm{Leb}_A\) for Lebesgue measure restricted to \(A\), and
\(\mathrm{Leb}=\mathrm{Leb}_{\R}\).

Let \(\N=\{1,2,\ldots\}\) and \(\mathbb Z_+=\N\cup\{0\}\).  If
\(\Lambda\subset\R\) is bounded and Borel with \(|\Lambda|>0\), put
\[
 \Conf_n(\Lambda)\defeq
 \{\betaeta\in\Conf(\Lambda):\betaeta(\Lambda)=n\}
 \quad(n\in\mathbb Z_+).
\]
Let \(\Bin_{n,\Lambda}\) be the law on \(\Conf_n(\Lambda)\) of
\(\sum_{j=1}^n\delta_{\mathsf U_j}\), where
\(\mathsf U_1,\ldots,\mathsf U_n\) are independent with common law
\(|\Lambda|^{-1}\mathrm{Leb}_\Lambda\), and set
\(\Bin_{0,\Lambda}=\delta_{\varnothing}\).
Equivalently, for every bounded Borel \(F\),
\[
 \int_{\Conf_n(\Lambda)}F(\betaeta)\Bin_{n,\Lambda}(\dd\betaeta)
 =\frac1{|\Lambda|^n}\int_{\Lambda^n}
 F\left(\sum_{i=1}^n\delta_{x_i}\right)
 \prod_{i=1}^n\dd x_i.
\]
  A function on \(\Conf(\R)\) is
local if it depends only on the restriction to some bounded interval.
The first intensity measure of a point-process law \(\bQ\)
is
\[
 A\longmapsto\int_{\Conf(\R)}\bgamma(A)\bQ(\dd\bgamma).
\]
A universally measurable set is measurable for the completion of every
Borel probability measure; completed measurability allows modification on a
null set.  Recall that \(\bC\) denotes the canonical configuration.
The process has intensity one when
\[
 \E_\bP\left[\bC(\mathscr A)\right]=|\mathscr A|
\]
 for every bounded Borel \(\mathscr A\subset\R\).

The following definition packages the finite-volume Hamiltonian, exterior
move cost, and canonical Gibbs kernel used in the DLR equation and in
Section~\ref{SectionCanonicalFibre}.

\begin{defn}\label{DefCanonicalKernel}
Let \(\beta>0\) and let \(\Lambda\subset\R\) be bounded and Borel with
\(|\Lambda|>0\).  For each integer \(p\geq1\), set
\(\Lambda_p=[-p/2,p/2]\).  For
\(\bgamma\in\Conf(\R)\), \(n=\bgamma(\Lambda)\), and
\(\betaeta\in\Conf_n(\Lambda)\), define
\[
 \mathsf{H}_\Lambda(\betaeta)\defeq
 \frac12\iint_{\Lambda^2\setminus\Delta}
 \mathsf{g}(x-y)\dd\betaeta(x)\dd\betaeta(y)
\]
when \(\betaeta\) is simple, and set \(\mathsf{H}_\Lambda(\betaeta)=+\infty\)
otherwise.  For an integer \(p\geq1\) such that
\(\Lambda\subset\Lambda_p\), define the truncated move function
\[
 \mathsf{M}_{\Lambda,\Lambda_p}(\betaeta,\bgamma)\defeq
 \iint_{\Lambda\times(\Lambda_p\setminus\Lambda)}
 \mathsf{g}(x-y)\dd(\betaeta-\bgamma_\Lambda)(x)\dd\bgamma(y).
\]
We call \(\bgamma\) \((\beta,\Lambda)\)-admissible if there is a finite
function \(\mathsf{M}_{\Lambda,\R}(\mathord\cdot,\bgamma)\) on
\(\Conf_n(\Lambda)\) such that
\[
 \lim_{\substack{p\to\infty\\p\in\N}}
 \sup_{\betaeta\in\Conf_n(\Lambda)}
 \left|\mathsf{M}_{\Lambda,\Lambda_p}(\betaeta,\bgamma)
 -\mathsf{M}_{\Lambda,\R}(\betaeta,\bgamma)\right|=0
\]
and
\[
 0<Z_{\Lambda,\R}^{\beta}(\bgamma)\defeq
 \int_{\Conf_n(\Lambda)}
 \exp\left[-\beta\left(\mathsf{H}_\Lambda(\betaeta)
 +\mathsf{M}_{\Lambda,\R}(\betaeta,\bgamma)\right)\right]
 \Bin_{n,\Lambda}(\dd\betaeta)<\infty.
\]
For \((\beta,\Lambda)\)-admissible \(\bgamma\), define
\[
 \mathsf G_{\Lambda,\R}^{\beta}(\dd\betaeta,\bgamma)\defeq
 \frac{\exp\left[-\beta\left(\mathsf{H}_\Lambda(\betaeta)
 +\mathsf{M}_{\Lambda,\R}(\betaeta,\bgamma)\right)\right]}
 {Z_{\Lambda,\R}^{\beta}(\bgamma)}
 \Bin_{n,\Lambda}(\dd\betaeta).
\]
 Equivalently, for every bounded Borel \(F\), its particle-coordinate form is
 \begin{equation}\label{EqCanonicalKernelCoordinates}
 \begin{split}
 &\int_{\Conf_n(\Lambda)}F(\betaeta)
  \mathsf G_{\Lambda,\R}^{\beta}(\dd\betaeta,\bgamma)\\
 &\quad=\frac1{|\Lambda|^n Z_{\Lambda,\R}^{\beta}(\bgamma)}
  \int_{\Lambda^n}
  F\!\left(\sum_{i=1}^n\delta_{x_i}\right)
  \prod_{1\leq i<j\leq n}|x_i-x_j|^\beta
  \exp\!\left[-\beta\mathsf M_{\Lambda,\R}
   \left(\sum_{i=1}^n\delta_{x_i},\bgamma\right)\right]
  \prod_{i=1}^n\dd x_i.
 \end{split}
 \end{equation}
\end{defn}

For a Borel set \(\mathscr A\subset\R\), let
\[
 \scrF_{\mathscr A}\defeq
 \sigma\{\bgamma\mapsto\bgamma(\mathscr B):
 \mathscr B\subset\mathscr A\text{ is bounded and Borel}\},
 \qquad
 \scrF_\Lambda^{\mathrm{can}}\defeq
 \scrF_{\Lambda^c}\vee\sigma\{\bgamma\mapsto\bgamma(\Lambda)\}.
\]

For a sigma-field \(\mathscr F\), write
\(\overline{\mathscr F}^{\,\bP}\) for its \(\bP\)-completion. We can finally define the DLR equations.

\begin{defn}\label{DefDLREquations}
Let \(\beta>0\).  A point process \(\bP\) satisfies the canonical
\(\beta\)-DLR equations if, for every bounded Borel
\(\Lambda\subset\R\) with \(|\Lambda|>0\), there is a universally measurable
\(\bP\)-full set \(\mathscr A_\Lambda^{\mathrm{DLR}}\) of
\((\beta,\Lambda)\)-admissible configurations such that the kernel in
Definition~\ref{DefCanonicalKernel}, defined on
\(\mathscr A_\Lambda^{\mathrm{DLR}}\),
admits an
\(\overline{\scrF_\Lambda^{\mathrm{can}}}^{\,\bP}\)-measurable
probability-kernel extension, still denoted by
\(\mathsf G_{\Lambda,\R}^{\beta}\), satisfying
\begin{equation}\label{EqDLR}
 \int_{\Conf(\R)}F(\bgamma)\bP(\dd\bgamma)
 =
 \int_{\Conf(\R)}
 \int_{\Conf(\Lambda)}
 F(\betaeta+\bgamma_{\Lambda^c})
 \mathsf G_{\Lambda,\R}^{\beta}(\dd\betaeta,\bgamma)\bP(\dd\bgamma)
\end{equation}
for every bounded Borel \(F\colon\Conf(\R)\to\R\).
\end{defn}

If \(\bgamma(\{0\})=0\) and
\(\betaeta=\sum_{j=1}^n\delta_{x_j}\), then at a finite cutoff the density in
Definition~\ref{DefCanonicalKernel} is, up to a factor independent of the
resampled points,
\[
 \prod_{1\leq j<k\leq n}|x_j-x_k|^\beta
 \prod_{j=1}^n
 \prod_{\substack{u\in\bgamma_{\Lambda^c}\\|u|\leq p/2}}
 \left|1-\frac{x_j}{u}\right|^\beta.
\]
Consequently, whenever the exterior product
\eqref{EqIntroExteriorWeight} exists, normalization gives exactly
\eqref{EqIntroDensity}; the multiplicative gauge cancels. 
Definition~\ref{DefDLREquations} is the precise, gauge-invariant formulation
used below.

Constants denoted by \(C\), with or without subscripts, are finite and may
change from line to line; their subscripts record their permitted
dependencies.

\subsection{Configurations, entropy, and ordered energy}

The intrinsic ordered energy is the finite-volume quantity entering the exact
affine comparison with the periodic circular energy.

\begin{defn}
Let \(I\subset\R\) be a bounded interval.  For
\(\betaeta=\sum_{i=1}^n\delta_{x_i}\in\Conf(I)\) simple, define
\begin{equation}\label{EqIntrinsicDefinition}
 \begin{aligned}
 W_I^{\intE}(\betaeta)
 &\defeq
 \iint_{I^2\setminus\Delta}\mathsf{g}(x-y)
 \dd(\betaeta-\mathrm{Leb}_I)(x)\dd(\betaeta-\mathrm{Leb}_I)(y)\\
 &=\sum_{i\ne j}\mathsf{g}(x_i-x_j)
 -2\sum_{i=1}^n\int_I\mathsf{g}(x_i-y)\dd y
 +\iint_{I^2}\mathsf{g}(x-y)\dd x\dd y.
 \end{aligned}
\end{equation}
 Set
\(W_I^{\intE}(\betaeta)=+\infty\) otherwise.
\end{defn}
The adjective ordered records that the double integral counts both
orientations of each pair; this is the source of the factor \(2\) in the
Hamiltonian comparison below.

We next fix the relative-entropy convention used below.

\begin{defn}
For probability measures \(\mu\) and \(\nu\)
on a common measurable space \(\mathsf X\), define
\[
 \Ent(\mu|\nu)\defeq
 \begin{cases}
 \displaystyle\int_{\mathsf X}
 \log\left(\left(\frac{\dd\mu}{\dd\nu}\right)(x)\right)
 \mu(\dd x),&\mu\ll\nu,\\
 +\infty,&\mu\not\ll\nu.
 \end{cases}
\]
Here \(\mu\ll\nu\) means that \(\mu\) is absolutely continuous with respect
to \(\nu\).
\end{defn}

\subsection{Electric energies}

The following compatibility and truncation conventions turn configurations
into two-dimensional electric fields and underlie every electric estimate.
For every open set \(U\subseteq\R^2\), write
\(\mathscr D'(U)\defeq\bigl(C_c^\infty(U)\bigr)'\) for the space of
distributions on \(U\).

\begin{defn}
Fix \(1<q_{\mathrm{el}}<2\), and put
\(\mathscr E=\mathrm{L}^{q_{\mathrm{el}}}_{\mathrm{loc}}(\R^2;\R^2)\),
endowed with its strong local \(\mathrm L^{q_{\mathrm{el}}}\) topology and
Borel sigma-field.
For \(\mathrm E\in\mathscr E\) and \(t\in\R\), set
\[
 (\theta_t\mathrm E)(X)\defeq \mathrm E(X+(t,0)).
\]
A field \(\mathrm E\in\mathscr E\) is divergence-compatible with
\(\bgamma\in\Conf(\R)\) if
\[
 -\operatorname{div}(\mathrm E)=2\pi(\bgamma-\mathrm{Leb})
\]
in \(\mathscr D'(\R^2)\), where \(\bgamma\) and \(\mathrm{Leb}\) are embedded in
\(\R\times\{0\}\).
For \(a\in\R\), write \(a^+=\max(a,0)\) and
\(a^-=\max(-a,0)\).  For \(\eps\in(0,1)\), set
\begin{equation}\label{EqTruncationDefinition}
 f_\eps(X)\defeq
 \left(\log\left(\frac{\eps}{|X|}\right)\right)^+
 \quad(X\neq0),\qquad f_\eps(0)\defeq0,\qquad
 \mathrm E_\eps\defeq \mathrm E-\int_\R\nabla f_\eps
 (\mathord\cdot-(x,0))\dd\bgamma(x).
\end{equation}
The gradient \(\nabla f_\eps\) is the weak gradient; the assigned value at
\(X=0\) is immaterial.
\end{defn}

The resulting process-level energy defined next appears as the main non-trivial hypothesis in our  result.

\begin{defn}\label{DefElectricEnergies}
Define
\[
 \widetilde{\cW}^{\mathrm{div}}(\bgamma)
 \defeq\inf_{\mathrm E:\,\mathrm E\text{ divergence-compatible with }\bgamma}
 \lim_{\eps\downarrow0}\limsup_{R\to\infty}
 \left(\frac{1}{2R}\frac{1}{2\pi}
 \int_{[-R,R]\times\R}|\mathrm E_\eps(X)|^2\dd X+\log(\eps)\right).
\]
The infimum of the empty set is \(+\infty\).  The extended-real cutoff
limits and their normalization are those of
\cite[(2.23)--(2.25) and Sections~2.7.3--2.7.4]{LS} and
\cite[Definition~2.6]{EHL}.  For a stationary process \(\bP\), put
\[
 \cW^{\mathrm{div}}(\bP)
 \defeq\E_\bP\left[\widetilde{\cW}^{\mathrm{div}}(\bC)\right].
\]
\end{defn}

We also use the universal lower bound
\(\widetilde{\cW}^{\mathrm{div}}\geq-C_{\mathrm{el}}\),
which follows from the truncation-monotonicity argument of
\cite[Proposition~2.4]{PS}, and the Gauss--flux consequence
\cite[Lemma~2.1]{PS} that every configuration of finite renormalised
electric energy has asymptotic density one.  

\subsection{The imported results}

We collect the following fundamental results from \cite{PS,LS,EHL,DHLM}. Part~\textup{(d)} simply records the
uniform admissibility already contained in Definition~\ref{DefDLREquations}.

\begin{prop}\label{PropInputs}
Let \(\bP\) satisfy the hypotheses of Theorem~\ref{ThmMain}.
\begin{enumerate}[label=\textup{(\alph*)}]
\item \(\bP\) is number-rigid: for every bounded Borel
\(\Lambda\subset\R\), the random variable \(\bC(\Lambda)\) is measurable
with respect to the \(\bP\)-completion of \(\scrF_{\Lambda^c}\).
\item \(\bP\) has intensity one, and there is \(C_{\bP}<\infty\) such
that, for every bounded interval \(J\subset\R\),
\[
 \E_{\bP}[(\bC(J)-|J|)^2]\leq C_{\bP}|J|.
\]
\item The number variance is sublinear:
\[
 \lim_{L\to\infty}
 \frac{\E_{\bP}[(\bC([0,L])-L)^2]}{L}=0.
\]
\item For every bounded Borel set \(\Lambda\subset\R\) with
\(|\Lambda|>0\), there is a \(\bP\)-full set on which the exterior move
functions converge uniformly on the fixed-particle-number fibre:
\[
 \lim_{\substack{p\to\infty\\p\in\N}}
 \sup_{\betaeta\in\Conf_{\bC(\Lambda)}(\Lambda)}
 \left|
 \mathsf M_{\Lambda,\Lambda_p}(\betaeta,\bC)
 -\mathsf M_{\Lambda,\R}(\betaeta,\bC)
 \right|=0.
\]
The limit is finite on the entire fibre.
\end{enumerate}
\end{prop}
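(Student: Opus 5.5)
The plan is to obtain the four items of Proposition~\ref{PropInputs} by checking that \(\bP\) falls within the scope of the cited results, and then quoting them. There is essentially no new argument, only bookkeeping of hypotheses.

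Item (d) comes first because it is immediate. Definition~\ref{DefDLREquations} provides a \(\bP\)-full set \(\mathscr A_\Lambda^{\mathrm{DLR}}\) of \((\beta,\Lambda)\)-admissible configurations. Admissibility, as in Definition~\ref{DefCanonicalKernel}, is exactly uniform convergence of \(\mathsf M_{\Lambda,\Lambda_p}(\mathord\cdot,\bgamma)\) on \(\Conf_{\bgamma(\Lambda)}(\Lambda)\) to a finite limit.

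For (b), stationarity together with \(\cW^{\mathrm{div}}(\bP)<\infty\) gives \(\widetilde\cW^{\mathrm{div}}(\bC)<\infty\) almost surely. By the Gauss-flux lemma \cite[Lemma~2.1]{PS}, every such configuration has asymptotic density one, so \(\bC([0,L])/L\to1\) almost surely.
\begin{itemize}
\item To get intensity one, I would use the discrepancy bound that the electric energy provides. The local energy in a strip controls \((\bC(J)-|J|)^2\) through the flux across the boundary of a box around \(J\).
\item Taking expectations and using stationarity, together with the lower bound \(\widetilde\cW^{\mathrm{div}}\geq-C_{\mathrm{el}}\), gives \(\E_\bP[(\bC(J)-|J|)^2]\leq C(\cW^{\mathrm{div}}(\bP)+C_{\mathrm{el}}+1)|J|\). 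This is the discrepancy estimate of \cite{PS,LS} (cf.\ \cite[Section~2]{EHL}).
\item The bound makes \(\bC([0,L])/L\) bounded in \(L^2\), hence uniformly integrable, so the almost sure convergence upgrades to convergence of the mean.
\item Stationarity makes \(A\mapsto\E_\bP[\bC(A)]\) translation invariant, so the intensity is \(c\,\mathrm{Leb}\), and the previous point forces \(c=1\).
\end{itemize}

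For (a) and (c), I would appeal to \cite{DHLM}, whose arguments use only stationarity, finite renormalised energy and the canonical DLR property. Their number-rigidity theorem and the accompanying sublinear variance estimate \(\E[(\bC([0,L])-L)^2]=o(L)\) then apply verbatim.

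The main obstacle is hypothesis matching. DHLM state their results for \(\Sine_\beta\) itself, and one must check that their proofs use only stationarity, finiteness of the energy in the sense of Definition~\ref{DefElectricEnergies}, and the DLR kernel of Definition~\ref{DefDLREquations} rather than special properties of \(\Sine_\beta\). I would first try to verify this for the \(o(L)\) variance. If that fails, I would derive it directly from finite energy via the screening argument of \cite{LS}: the energy cost of a discrepancy \(D\) on \([0,L]\) is of order \(D^2/L\), up to \(\log\) corrections, and rigidity then follows from \(v(L)=o(L)\) by the standard Ghosh--Peres linear-statistic argument with logarithmic test functions.
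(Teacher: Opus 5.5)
Your main plan matches the paper. The paper gives no argument for Proposition~\ref{PropInputs} beyond importing (a)--(c) from \cite{PS,LS,EHL,DHLM}. It also observes that (d) is the uniform admissibility built into Definition~\ref{DefDLREquations}, which is exactly your treatment of (d).

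The backup arguments, however, would fail if you had to use them. Most seriously, rigidity does not follow from $v(L)=o(L)$ via Ghosh--Peres. That criterion needs compactly supported $f\equiv1$ on $\Lambda$ with $\Var(\sum_x f(x))$ arbitrarily small. Heuristically, in terms of the structure factor $S$, logarithmic test functions at scale $N$ have variance of order $(\log N)^{-2}\int_{N^{-2}}^{N^{-1}}S(k)k^{-2}\,\dd k$. This is small when $S(k)=O(|k|)$, i.e.\ in the logarithmic-variance regime of $\Sine_\beta$. But suppose $S(k)\asymp\min(1,|k|^{\alpha})$ with $0<\alpha<1$, as for a randomly shifted lattice with i.i.d.\ symmetric $\alpha$-stable perturbations. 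Then $v(L)\asymp L^{1-\alpha}=o(L)$. Yet $\Var(\sum_x f(x))$ is bounded below uniformly over admissible $f$, since intervals have positive capacity for the corresponding Sobolev norm of order $\alpha/2<1/2$. So the Ghosh--Peres route is unavailable from (c), and rigidity has to be imported from \cite{DHLM}, as in your main plan and in the paper.

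The screening fallback for (c) also falls short. Balancing a cost of order $D^2$ (up to logarithms) against the energy of order $L$ available in a window of length $L$ gives at best $v(L)=O(L)$, which is (b). Sublinearity needs the extra fact that the energy carried by length scales of order $L$ is itself $o(L)$.

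Your sketch of (b) also compresses several nontrivial steps. A pointwise discrepancy estimate of the usual form bounds $D^2\min(1,|D|/R)$ by the energy in a box of size $2R$. From an $O(R)$ box energy this only yields $\E_{\bP}[D^2]=O(R^{4/3})$. The linear bound needs three further ingredients.
\begin{enumerate}
\item Average the flux inequality over box sizes in $[\ell,2\ell]$.
\item Apply Birkhoff's theorem over translates. The field in $\widetilde{\cW}^{\mathrm{div}}$ is chosen configuration by configuration and controls only centred strip averages as $R\to\infty$, so local energies cannot simply be integrated against $\bP$.
\item Use the splitting $f(t)\le2f(s)+2f(t-s)$ for $f(t)=\E_{\bP}[\sD_t^2]$, averaged over $s$, to pass from averaged radii to a fixed interval.
\end{enumerate}
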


The DLR density is absolutely continuous on every particle-number fibre and
therefore rules out collisions giving the following.

\begin{lem}\label{LemSimplicity}
Every process satisfying Definition~\ref{DefDLREquations} is supported on
simple configurations.
\end{lem}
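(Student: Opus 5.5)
The plan is to show that the set of non-simple configurations is \(\bP\)-null by testing the DLR identity \eqref{EqDLR} against indicators of collision events. Collisions happen in some bounded window, so it suffices to fix a bounded interval and work there. For an integer \(m\geq1\), set \(\Lambda=\Lambda_m=[-m/2,m/2]\). Let
\[
 F_m(\bgamma)=\1\{\exists x\in\Lambda_m:\bgamma(\{x\})\geq2\}.
\]
This event is Borel: it is the event that for some \(k\) there is a dyadic interval of length \(2^{-k}\) meeting \(\Lambda_m\) (finitely many) with count \(\geq2\) for all finer levels. Equivalently, it is a countable intersection/union of events defined through counts \(\bgamma(\mathscr B)\). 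A configuration is non-simple if and only if \(F_m(\bgamma)=1\) for some \(m\), so it suffices to prove \(\E_\bP[F_m]=0\) for each \(m\).

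Next apply \eqref{EqDLR} with \(\Lambda=\Lambda_m\) and \(F=F_m\). We have \(F_m(\betaeta+\bgamma_{\Lambda^c})=\1\{\betaeta\text{ not simple}\}\), because multiple points located in \(\Lambda_m\) come only from \(\betaeta\in\Conf(\Lambda_m)\). The only care needed is that boundary points belong to \(\Lambda\), which is closed, so \(\bgamma_{\Lambda^c}\) charges nothing in \(\Lambda\). Hence
\[
 \E_\bP[F_m]=\int \mathsf G^\beta_{\Lambda,\R}(\{\betaeta\text{ not simple}\},\bgamma)\,\bP(\dd\bgamma).
\]
On the full set \(\mathscr A^{\mathrm{DLR}}_\Lambda\) the kernel has the explicit form of Definition~\ref{DefCanonicalKernel}. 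It is absolutely continuous with respect to \(\Bin_{n,\Lambda}\), with \(n=\bgamma(\Lambda)\) and \(Z^\beta_{\Lambda,\R}\in(0,\infty)\). In fact it even gives mass zero to non-simple \(\betaeta\), since \(\mathsf H_\Lambda=+\infty\) there, so the density \(e^{-\beta(\cdot)}\) vanishes. Independently of this, \(\Bin_{n,\Lambda}\) charges no non-simple configuration, because the diagonal set \(\{x_i=x_j \text{ for some } i\neq j\}\subset\Lambda^n\) has Lebesgue measure zero. The measurable extension of the kernel agrees with the explicit one \(\bP\)-a.e., so the integrand vanishes \(\bP\)-a.s. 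This gives \(\E_\bP[F_m]=0\).

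Finally, take a countable union over \(m\) to conclude that \(\bP\) is supported on simple configurations. I expect no serious obstacle here. The only points requiring care are the measurability of the collision event and the fact that the extended kernel coincides with the explicit formula off a \(\bP\)-null set; both are routine.
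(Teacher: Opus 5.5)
Your proof is correct and follows the paper's own argument. The paper justifies Lemma~\ref{LemSimplicity} in one line: on every particle-number fibre the DLR kernel is absolutely continuous with respect to $\Bin_{n,\Lambda}$, which charges no collisions. You make this explicit by testing \eqref{EqDLR} with the collision indicator on $\Lambda_m$, using that the completed kernel agrees with the explicit Gibbs kernel on the $\bP$-full set $\mathscr A^{\mathrm{DLR}}_{\Lambda}$, and taking a countable union over $m$.
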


Finally, DHLM prove that \(\Sine_\beta\) satisfies the canonical
\(\beta\)-DLR equations.  Their uniform convergence of the exterior move
functions identifies their normalized conditional density with the kernel
in Definition~\ref{DefDLREquations}.

\begin{lem}\label{LemDHLMKernelBridge}
For every \(\beta>0\), the process \(\Sine_\beta\) satisfies
Definition~\ref{DefDLREquations}.
\end{lem}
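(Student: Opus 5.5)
The plan is to bridge the DHLM formulation of the canonical DLR equations for \(\Sine_\beta\) to the gauge-invariant kernel of Definition~\ref{DefCanonicalKernel}. The argument rests on two facts. First, DHLM's conditional law is itself a normalized Gibbs density built from truncated move functions. Second, normalized densities are insensitive to multiplicative constants that do not depend on the resampled points.

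\textbf{Step 1: recall what DHLM prove.} Fix a bounded Borel \(\Lambda\) with \(|\Lambda|>0\). By \cite[Theorems~1.1 and~2.1]{DHLM}, for \(\Sine_\beta\)-a.e.\ \(\bgamma\) the following hold.
\begin{itemize}
\item The truncated move functions \(\mathsf M_{\Lambda,\Lambda_p}(\cdot,\bgamma)\) converge uniformly on \(\Conf_{\bgamma(\Lambda)}(\Lambda)\) to a finite limit \(\mathsf M_{\Lambda,\R}(\cdot,\bgamma)\). DHLM state this with their own centering, which differs from ours by the \(\betaeta\)-independent term \(\bgamma_\Lambda\), hence by a fibrewise constant.
\item The conditional law of \(\bC_\Lambda\), given \(\scrF_{\Lambda^c}\) and \(\bC(\Lambda)\), has density proportional to \(\exp[-\beta(\mathsf H_\Lambda+\mathsf M_{\Lambda,\R})]\) with respect to \(\Bin_{n,\Lambda}\).
\end{itemize}
I would let \(\mathscr A_\Lambda^{\mathrm{DLR}}\) be this full set, intersected with the set of simple configurations. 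Universal measurability can be obtained by replacing the set with a Borel subset of full measure. This is legitimate because the uniform convergence condition is expressed through countably many \(p\) and a supremum over a fibre, which can be reduced to a countable dense family of \(\betaeta\) by continuity of \(\mathsf M_{\Lambda,\Lambda_p}\) off collisions.

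\textbf{Step 2: finiteness of the partition function.} I need \(0<Z^\beta_{\Lambda,\R}<\infty\).
\begin{itemize}
\item Positivity: \(\mathsf H_\Lambda<\infty\) on simple configurations and \(\mathsf M_{\Lambda,\R}\) is finite, so the integrand is positive on a set of full \(\Bin\)-measure.
\item Finiteness: uniform convergence gives \(\sup_{\betaeta}|\mathsf M_{\Lambda,\R}-\mathsf M_{\Lambda,\Lambda_{p_0}}|\le1\) for some fixed \(p_0\). At finite cutoff \(p_0\) the weight is a product of \(|x-y|^\beta\) factors over finitely many exterior points, which is bounded on \(\Lambda\). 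Likewise \(\prod_{i<j}|x_i-x_j|^\beta\) is bounded on the bounded set \(\Lambda\).
\end{itemize}
So the integrand is bounded and \(Z<\infty\).

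\textbf{Step 3: measurability and the DLR identity.} The kernel is \(\scrF^{\mathrm{can}}_\Lambda\)-measurable up to completion: it is a limit of kernels built from finitely many exterior points together with \(\bgamma(\Lambda)\). Off \(\mathscr A_\Lambda^{\mathrm{DLR}}\) I would extend it by \(\Bin_{n,\Lambda}\).

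The identity \eqref{EqDLR} then follows from the defining property of the conditional law in DHLM's Theorem~1.1(C). Any normalization or gauge difference is a factor depending only on \(\bgamma_{\Lambda^c}\) and \(n\), and it cancels after dividing by \(Z\). Number rigidity \cite{DHLM} is not even required here, since the kernel in Definition~\ref{DefDLREquations} is canonical.

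\textbf{Main obstacle.} The hardest step is reconciling conventions: DHLM's precise truncation sets, their centering of the exterior potential, and their measurability class must be matched with Definition~\ref{DefCanonicalKernel}. I would handle this by checking that every discrepancy is either a \(\betaeta\)-independent additive term in the exponent, which cancels in normalization, or a change of exhaustion sequence. For the latter, uniform convergence along \(\Lambda_p\) is exactly what DHLM establish.
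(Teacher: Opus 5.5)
Your proposal is correct and follows essentially the paper's argument. You import DHLM's uniform move-function convergence and canonical resampling identity, observe that every mismatch of conventions (centering, truncation, product form of the exterior weight) is a factor independent of the resampled points that cancels on normalisation, and then do the measurability bookkeeping. One citation point: the canonical identity is \cite[Lemma~2.3 and Theorem~2.1\textup{(C*)}]{DHLM}, whereas Theorem~1.1(C) is the full-exterior version, which tacitly uses rigidity, so your remark that rigidity is not needed holds only with the former citation.

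The remaining difference is cosmetic. You build the completed \(\scrF_\Lambda^{\mathrm{can}}\)-measurable kernel directly as a limit of finite-cutoff kernels and check \(0<Z_{\Lambda,\R}^\beta<\infty\) by hand. The paper instead passes to a Borel version of the regular conditional law and intersects the equality sets over a countable determining class.
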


The canonical resampling identity is
\cite[Lemma~2.3 and Theorem~2.1\textup{(C*)}]{DHLM}, written in the
normalization of Definition~\ref{DefCanonicalKernel}.  Passing to a Borel
version of the regular conditional law and intersecting the equality sets
for a countable determining class gives the completed-kernel formulation
required in Definition~\ref{DefDLREquations}.

\section{Some finite-volume algebra}\label{SectionNeutralisation}

This section records the exact affine algebra needed in the direct Gibbs
comparison.  Dilation separates a random non-neutral sector from an exactly
neutral interval, and a second-moment bound for the centered counting discrepancy makes the resulting mixed
linear statistic sublinear after averaging.

\subsection{Exact dilation algebra}

Recall that \(I_s=[0,s]\) for \(s>0\), \(I_0=\varnothing\), and
\(T_a(u)=au\) for \(a>0\); push-forwards under this map are
written \((T_a)_*\mu\).
For \(m>0\), define
\[
 U_m(u)\defeq\int_0^m \mathsf{g}(u-v)\dd v.
\]
For \(n\in\N\) and \(\bzeta\in\Conf_n(I_n)\), set
\[
 \mathcal A_n(\bzeta)\defeq
 \int_{I_n}U_n(u)\dd(\bzeta-\mathrm{Leb}_{I_n})(u).
\]
Set \(\mathcal A_0(\varnothing)=0\). The statistic \(\mathcal A_n\) is the cross term created when dilation
changes the density of the uniform background.  It is the only
configuration-dependent correction in the affine identity below.
Direct integration gives
\begin{equation}\label{EqUm}
 \begin{aligned}
 U_m(u)&=m-u\log(u)-(m-u)\log(m-u)
          &&(0\leq u\leq m),\\
 U_m'(u)&=\log\left(\frac{m-u}{u}\right)
          &&(0<u<m).
 \end{aligned}
\end{equation}

Neutralising a random particle-number sector requires an exact dilation
identity.  The next lemma separates its intrinsic energy into a neutral-sector
term, a linear statistic, and explicit discrepancy corrections.

\begin{lem}
Let \(L>0\), let \(n\in\N\), put \(D=n-L\), and let \(\bzeta\) be a simple
configuration of \(n\) points in \(I_n\).  If
\(\betaeta=(T_{L/n})_*\bzeta\) is the image of \(\bzeta\) under
\(u\mapsto (L/n)u\), then
\begin{equation}\label{EqAffineIdentity}
 W_{I_L}^{\intE}(\betaeta)+D^2\log(L)
 =
 W_{I_n}^{\intE}(\bzeta)
 +\frac{2D}{n}\mathcal A_n(\bzeta)
 +\frac32D^2+n\log\left(\frac{L}{n}\right).
\end{equation}
\end{lem}

\begin{proof}
Set \(a=L/n\), \(\omega=\bzeta-\mathrm{Leb}_{I_n}\), and
\(\lambda=D/n\).  The image under
\(x\mapsto x/a\) of \(\betaeta-\mathrm{Leb}_{I_L}\) is
\[
 \bzeta-a\mathrm{Leb}_{I_n}
 =\omega+\lambda\mathrm{Leb}_{I_n},
\]
and, for \(u\neq v\),
\[
 \mathsf{g}(a(u-v))=\mathsf{g}(u-v)-\log(a).
\]
Since
\(\omega(I_n)=0\),
\[
 \begin{aligned}
 &\iint_{I_n^2\setminus\Delta}\mathsf{g}(u-v)
 \dd(\omega+\lambda\mathrm{Leb}_{I_n})(u)
 \dd(\omega+\lambda\mathrm{Leb}_{I_n})(v)\\
 &\qquad=
 W_{I_n}^{\intE}(\bzeta)+2\lambda \mathcal A_n(\bzeta)
 +\lambda^2\iint_{I_n^2}\mathsf{g}(u-v)\dd u\dd v.
 \end{aligned}
\]
Scaling \(u=ns\) and \(v=nt\) gives
\[
 \iint_{I_n^2}\mathsf{g}(u-v)\dd u\dd v
 =n^2\left(\frac32-\log(n)\right).
\]
The signed measure
\(\omega+\lambda\mathrm{Leb}_{I_n}
=\bzeta-a\mathrm{Leb}_{I_n}\) has total mass
\(D\).  Its product measure assigns mass \(n\) to the removed atomic
diagonal, while every term containing Lebesgue measure assigns zero mass to
\(\Delta\).  Hence
\[
 \iint_{I_n^2\setminus\Delta}
 1\,\dd(\omega+\lambda\mathrm{Leb}_{I_n})(u)
 \dd(\omega+\lambda\mathrm{Leb}_{I_n})(v)=D^2-n.
\]
Consequently,
\[
 W_{I_L}^{\intE}(\betaeta)
 =
 W_{I_n}^{\intE}(\bzeta)+\frac{2D}{n}\mathcal A_n(\bzeta)
 +D^2\left(\frac32-\log(n)\right)
 -(D^2-n)\log\left(\frac{L}{n}\right).
\]
Adding \(D^2\log(L)\) and collecting the logarithmic terms gives
\eqref{EqAffineIdentity}.

\end{proof}

\subsection{The averaged dilation statistic}

Throughout this subsection, let \(\bP\) satisfy the hypotheses of
Theorem~\ref{ThmMain}.  For \(s>0\), recall
\[
 \sN_s=\bC(I_s),\qquad
 \sD_s=\sN_s-s,\qquad
 v(s)=\E_{\bP}[\sD_s^2],
\]
and set \(I_0=\varnothing\) and \(\sN_0=\sD_0=v(0)=0\).  For \(L>0\), define
\[
 \bZ_L=(T_{\sN_L/L})_*\bC_{I_L}
 \quad\text{on }\{\sN_L\geq1\},
 \qquad
 \bZ_L=\varnothing\quad\text{on }\{\sN_L=0\}.
\]
Thus \(\bZ_L\) has \(\sN_L\) particles in an interval of length
\(\sN_L\), and is therefore exactly neutral.  It is the random-sector
candidate configuration that will be compared with an exactly neutral
circular ensemble. Proposition~\ref{PropInputs}(b) ensures the required finite second moments.
By Lemma~\ref{LemSimplicity}, \(\bC\) and all its restrictions are simple
almost surely.

The random map \(\bZ_L\) is Borel on the countable union of the
particle-number fibres.  More generally, if, for every \(n\in\mathbb Z_+\),
\(F_n\) is a Borel extended-real function on \(\Conf_n(I_n)\), then
\[
 \bgamma\longmapsto
 \begin{cases}
 \displaystyle
 F_{\bgamma(I_L)}
 \left((T_{\bgamma(I_L)/L})_*\bgamma_{I_L}\right),
 &\bgamma(I_L)\geq1,\\[1ex]
 F_0(\varnothing),&\bgamma(I_L)=0.
 \end{cases}
\]
This map is Borel by countable pasting over the sets
\(\{\bgamma(I_L)=n\}\).  This observation applies to all variable-sector
statistics and errors used below.

For \(L>0\) and \(0\leq s\leq L\), put
\[
 \mathsf{Br}_L(s)\defeq\sD_s-\frac{s}{L}\sD_L,
 \qquad
 q_L(s)\defeq\E_{\bP}[\mathsf{Br}_L(s)^2].
\]
The function \(\mathsf{Br}_L\) is the counting discrepancy after
subtracting the linear interpolation of its endpoint value, and hence
vanishes at both endpoints.  Its second moment \(q_L\) is the common
quantity controlling both the affine dilation statistic and the later
periodic error under the candidate law.
\begin{lem}\label{LemAveragedDilationStatistic}
For \(0\leq s\leq L\),
\begin{equation}\label{EqBridgeVarianceLinearBound}
 0\leq q_L(s)
 \leq2C_{\bP}\frac{s(L-s)}L.
\end{equation}
For every fixed \(0<x<1\),
\begin{equation}\label{EqRescaledBridgeLimit}
 \frac{q_L(Lx)}L\longrightarrow0
 \qquad\text{as }L\to\infty.
\end{equation}
Moreover, there is \(C<\infty\) such that, with the quotient defined to be
zero on \(\{\sN_L=0\}\),
\begin{equation}\label{EqAveragedDilationBound}
 \E_{\bP}\left[
  \left|\frac{\sD_L}{\sN_L}
  \mathcal A_{\sN_L}(\bZ_L)\right|
 \right]
 \leq C\sqrt{Lv(L)}=o(L).
\end{equation}
\end{lem}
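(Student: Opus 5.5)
We prove the three assertions in order, using only Proposition~\ref{PropInputs}(b),(c), stationarity and Cauchy--Schwarz.

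\emph{The bound \eqref{EqBridgeVarianceLinearBound}.} First, by stationarity \(\sD_L-\sD_s\) has the law of \(\sD_{L-s}\). We write
\[
 \mathsf{Br}_L(s)=\frac{L-s}{L}\sD_s-\frac{s}{L}(\sD_L-\sD_s),
\]
a convex-combination-type decomposition. Applying \((a+b)^2\leq 2a^2+2b^2\) and Proposition~\ref{PropInputs}(b) gives
\[
 q_L(s)\leq 2\frac{(L-s)^2}{L^2}C_{\bP}s+2\frac{s^2}{L^2}C_{\bP}(L-s)=2C_{\bP}\frac{s(L-s)}{L}.
\]

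\emph{The limit \eqref{EqRescaledBridgeLimit}.} With \(s=Lx\), the same decomposition gives
\[
 q_L(Lx)\leq 2(1-x)^2v(Lx)+2x^2v(L(1-x)).
\]
Proposition~\ref{PropInputs}(c) gives \(v(t)/t\to0\), so \(q_L(Lx)/L\to0\) for fixed \(x\in(0,1)\).

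\emph{The bound \eqref{EqAveragedDilationBound}.} We first establish the pathwise Stieltjes representation stated in the strategy section. On \(\{\sN_L=n\geq1\}\), change variables \(u=ns/L\) in \(\mathcal A_n(\bZ_L)\). Then \(\bZ_L-\mathrm{Leb}_{I_n}\) pulls back to \(\bC_{I_L}-\frac nL\mathrm{Leb}_{I_L}\), whose distribution function is \(\mathsf{Br}_L(s)\). This gives
\[
 \mathcal A_n(\bZ_L)=\int_{[0,L]}U_n(ns/L)\,\dd\mathsf{Br}_L(s).
\]
Since \(\mathsf{Br}_L(0)=\mathsf{Br}_L(L)=0\) and \(U_n\) is continuous on \([0,n]\), integration by parts gives
\[
 \mathcal A_n(\bZ_L)=-\int_0^L \mathsf{Br}_L(s)\,\frac nL\,U_n'(ns/L)\,\dd s=-\frac nL\int_0^L\mathsf{Br}_L(s)\log\frac{L-s}{s}\,\dd s,
\]
using \eqref{EqUm}. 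The boundary terms vanish and the logarithmic singularity is integrable, with atoms handled by right-continuity; this is the only place needing care, and it is justified because \(\mathsf{Br}_L\) is a bounded step function plus a linear term and \(U_n\) is absolutely continuous. Multiplying by \(\sD_L/n\), the factor \(n\) cancels and yields the stated identity, which is trivially true on the empty sector.

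Next we apply Cauchy--Schwarz twice, first in \(\bP\) and then via Fubini, together with \eqref{EqBridgeVarianceLinearBound}:
\[
 \E\left|\frac{\sD_L}{\sN_L}\mathcal A_{\sN_L}(\bZ_L)\right|\leq\frac1L\int_0^L\sqrt{v(L)}\sqrt{q_L(s)}\,\Bigl|\log\frac{L-s}{s}\Bigr|\dd s\leq\frac{\sqrt{v(L)}}{L}\int_0^L\sqrt{\tfrac{2C_{\bP}s(L-s)}{L}}\Bigl|\log\tfrac{L-s}s\Bigr|\dd s.
\]
Substituting \(s=Lx\), the last integral equals \(L^{3/2}\) times the finite constant \(\int_0^1\sqrt{2C_{\bP}x(1-x)}\,|\log((1-x)/x)|\,\dd x\). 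The whole expression is therefore at most \(C\sqrt{Lv(L)}\), and this is \(o(L)\) by Proposition~\ref{PropInputs}(c).

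The main obstacle is the rigorous integration by parts with the log-singular derivative at the endpoints. We handle it by integrating by parts on \([\eps,L-\eps]\) and letting \(\eps\to0\). This works because \(U_n\) is bounded and continuous at the endpoints and \(\mathsf{Br}_L(s)\to0\) there, the latter since the configuration is simple and a.s. has no atoms at \(0\) or \(L\).
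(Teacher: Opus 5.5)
Your proposal is correct and matches the paper's proof essentially step for step: the same decomposition \(\mathsf{Br}_L(s)=\frac{L-s}{L}\sD_s-\frac{s}{L}(\sD_L-\sD_s)\) with stationarity, the same Stieltjes representation \(\frac{\sD_L}{\sN_L}\mathcal A_{\sN_L}(\bZ_L)=-\frac{\sD_L}{L}\int_0^L\mathsf{Br}_L(s)\log\frac{L-s}{s}\,\dd s\) justified by atom-free endpoints, and the same Tonelli plus Cauchy--Schwarz bound. The differences are cosmetic: you change variables before integrating by parts, and only one application of Cauchy--Schwarz is actually needed.
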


\begin{proof}
Stationarity and intensity one imply
\(\bP\{\bC(\{t\})=0\}=1\) for every deterministic \(t\in\R\).  We work on
the resulting full-probability event for \(t=0,L\).  Write \(n=\sN_L\).
For \(0<s<L\), put \(\ell_L(s)=\log((L-s)/s)\), and set
\(\ell_L(0)=\ell_L(L)=0\); the endpoint values are immaterial below.

For fixed \(0\leq s\leq L\), set
\[
 \widetilde{\sD}_{s,L}
 \defeq\bC((s,L])-(L-s).
\]
Then
\[
 \sD_L=\sD_s+\widetilde{\sD}_{s,L},
 \qquad
 \mathsf{Br}_L(s)
 =\left(1-\frac{s}{L}\right)\sD_s
  -\frac{s}{L}\widetilde{\sD}_{s,L}.
\]
Stationarity gives
\(\E_{\bP}[\widetilde{\sD}_{s,L}^{\,2}]=v(L-s)\).  Therefore
\[
 q_L(s)
 \leq2\left(1-\frac{s}{L}\right)^2v(s)
 +2\left(\frac{s}{L}\right)^2v(L-s),
\]
and Proposition~\ref{PropInputs}\textup{(b)} proves
\eqref{EqBridgeVarianceLinearBound}.  If \(0<x<1\), the same inequality
gives
\[
 \frac{q_L(Lx)}L
 \leq2x(1-x)\left[
  (1-x)\frac{v(Lx)}{Lx}
  +x\frac{v(L(1-x))}{L(1-x)}
 \right].
\]
Proposition~\ref{PropInputs}\textup{(c)} proves
\eqref{EqRescaledBridgeLimit}.

If \(n\geq1\), integration by parts in \eqref{EqUm} gives
\[
 \mathcal A_n(\bZ_L)
 =-\int_0^n[\bZ_L([0,u])-u]
       \log\left(\frac{n-u}{u}\right)\dd u.
\]
Indeed, the cumulative function
\(u\mapsto(\bZ_L-\mathrm{Leb}_{I_n})([0,u])\) is bounded and has zero limits
at \(0\) and \(n\), while \(U_n\in W^{1,1}(0,n)\) and
\(U_n'=\log((n-u)/u)\).  Applying Stieltjes integration by parts first on
\([\delta,n-\delta]\) and then letting \(\delta\downarrow0\) is legitimate
because \(U_n\) has finite endpoint limits, \(U_n'\in\mathrm L^1(0,n)\),
and \(\bZ_L\) has no endpoint atoms.  Under \(u=(n/L)s\),
\[
 \bZ_L([0,u])-u=\mathsf{Br}_L(s).
\]
Consequently,
\begin{equation}\label{EqARepresentation}
 \frac{\sD_L}{\sN_L}\mathcal A_{\sN_L}(\bZ_L)
 =
 -\frac{\sD_L}{L}\int_0^L
 \mathsf{Br}_L(s)\ell_L(s)\dd s.
\end{equation}
For \(n=0\), we assign the left-hand side the value \(0\); the right-hand
side is already \(0\), since \(\mathsf{Br}_L\equiv0\).

By \eqref{EqARepresentation}, Tonelli's theorem, Cauchy--Schwarz, and
\eqref{EqBridgeVarianceLinearBound},
\[
 \begin{aligned}
 \E_{\bP}\left[
  \left|\frac{\sD_L}{\sN_L}
  \mathcal A_{\sN_L}(\bZ_L)\right|\right]
 &\leq\frac{\sqrt{v(L)}}L
 \int_0^L\sqrt{q_L(s)}\,|\ell_L(s)|\dd s\\
 &\leq C_{\bP}\sqrt{Lv(L)}
 \int_0^1\sqrt{u(1-u)}
 \left|\log\left(\frac{1-u}{u}\right)\right|\dd u.
 \end{aligned}
\]
The last integral is finite.  Proposition~\ref{PropInputs}\textup{(c)}
therefore proves \eqref{EqAveragedDilationBound}.
\end{proof}

\section{The conditional Gibbs kernel and boundary potential}
\label{SectionCanonicalFibre}

This section identifies the conditional distribution inside a bounded
interval as the canonical Gibbs kernel determined by its exterior
configuration and rigid particle number.  We construct an exterior-Borel
version of this kernel and isolate the gauge-invariant boundary potential
which appears when it is compared directly with the rescaled circular Gibbs
law in Section~\ref{SectionCircularEntropy}.

\subsection{Exterior measurability of the Gibbs kernel}

The analytic content of the two lemmas below is contained in the DHLM
move-function convergence recorded in
Proposition~\ref{PropInputs}\textup{(d)} and in the observation that the
normalised kernel depends only on the exterior configuration and the rigid
particle number.  We include concise proofs of the exterior-Borel and
disintegration points because these versions are used later in the direct
relative-entropy comparison.

For the remainder of the proof, let \(\bP\) satisfy the hypotheses of
Theorem~\ref{ThmMain}.  Fix \(L>0\), let \(\balpha_L\) be the law of
\(\bC_{I_L^c}\), and fix a Borel regular conditional kernel
\(\bxi\mapsto K_{L,\bxi}\) for \(\bC_{I_L}\) given \(\bC_{I_L^c}\).

Proposition~\ref{PropInputs}(a) gives a Borel function
\[
 n_L:\Conf(I_L^c)\longrightarrow\mathbb Z_+
 \quad\text{such that}\quad
 \bC(I_L)=n_L(\bC_{I_L^c})\quad \bP\text{-almost surely}.
\]
For \(j\in\mathbb Z_+\), choose a
deterministic simple \(\br_j\in\Conf_j(I_L)\), with
\(\br_0=\varnothing\), and set
\[
 p_{I_L}\defeq\min\{p\in\N:I_L\subset\Lambda_p\}.
\]
For \(p\in\N\) with \(p\geq p_{I_L}\), define
\[
 A^{(p)}(\bxi,\betaeta)\defeq
 \iint_{I_L\times(\Lambda_p\setminus I_L)}
 \mathsf{g}(x-y)\dd(\betaeta-\br_{n_L(\bxi)})(x)\dd\bxi(y)
\]
when \(\betaeta(I_L)=n_L(\bxi)\), and set \(A^{(p)}=0\) otherwise.
Put \(A^{(p)}=0\) for \(p<p_{I_L}\), and define
\begin{equation}\label{EqExteriorPotentialDefinition}
 A_{\bxi}(\betaeta)\defeq
 \limsup_{\substack{p\to\infty\\p\in\N}}
 A^{(p)}(\bxi,\betaeta).
\end{equation}

Rigidity fixes the interior particle number from the exterior.  The next lemma
constructs a Borel exterior potential \(A_{\bxi}\); its differences form the
cocycle, meaning that \(A_{\bxi}(\betaeta)-A_{\bxi}(\bX)\) is exactly the
exterior move cost.

\begin{lem}\label{LemExteriorPotential}
The maps \(A^{(p)}\) and the extended-valued map in
\eqref{EqExteriorPotentialDefinition} are Borel on
\(\Conf(I_L^c)\times\Conf(I_L)\).  There is a
\(\balpha_L\)-full Borel set
\(\mathscr X_{I_L}^{\mathrm{pot}}\subset\Conf(I_L^c)\) such that, for every
\(\bxi\in\mathscr X_{I_L}^{\mathrm{pot}}\),
\[
 A^{(p)}(\bxi,\mathord\cdot)\longrightarrow
 A_{\bxi}
 \quad\text{uniformly on }\Conf_{n_L(\bxi)}(I_L),
\]
and \(A_{\bxi}\) is finite and continuous on this fibre.  Moreover, for
every \(\bX,\betaeta\in\Conf_{n_L(\bxi)}(I_L)\), the configuration
\(\bX+\bxi\) is \((\beta,I_L)\)-admissible and
\begin{equation}\label{EqMoveCocycle}
 \mathsf{M}_{I_L,\R}(\betaeta,\bX+\bxi)
 =A_{\bxi}(\betaeta)-A_{\bxi}(\bX).
\end{equation}
\end{lem}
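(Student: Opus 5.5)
Measurability comes first. For fixed \(p\), split \(A^{(p)}(\bxi,\betaeta)\) into the term \(\iint\mathsf g(x-y)\dd\betaeta(x)\dd\bxi(y)\) minus the same term with \(\br_{n_L(\bxi)}\) in place of \(\betaeta\). On each set \(\{n_L(\bxi)=j,\ \betaeta(I_L)=j\}\), which is Borel because \(n_L\) is Borel and \(\betaeta\mapsto\betaeta(I_L)\) is Borel, the subtracted reference \(\br_j\) is fixed. Each term is then a double integral of \(\mathsf g\) against a product of two configurations restricted to the bounded sets \(I_L\) and \(\Lambda_p\setminus I_L\).

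Such integrals are Borel. One can approximate \(\mathsf g\) monotonically by bounded continuous truncations \(\min(\mathsf g,k)\) together with the positive and negative parts, and use that \(\bgamma\mapsto\int f\,\dd\bgamma\) is Borel for bounded Borel \(f\) with bounded support. Pasting over \(j\) and taking a \(\limsup\) over \(p\in\N\) then gives Borel measurability of \(A^{(p)}\) and of the extended-valued \(A\).

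For the convergence statement, the main step is to relate \(A^{(p)}\) to the DHLM move function. Fix \(\bxi\) with \(n=n_L(\bxi)\) and \(\bX\in\Conf_n(I_L)\), and set \(\bgamma=\bX+\bxi\), so that \(\bgamma_{I_L}=\bX\). By direct cancellation, for every \(\betaeta\in\Conf_n(I_L)\),
\[
 \mathsf M_{I_L,\Lambda_p}(\betaeta,\bgamma)=A^{(p)}(\bxi,\betaeta)-A^{(p)}(\bxi,\bX).
\]
Here we take \(\bX=\br_n\), which is simple. Then \(A^{(p)}(\bxi,\br_n)=0\), so \(A^{(p)}(\bxi,\cdot)=\mathsf M_{I_L,\Lambda_p}(\cdot,\br_n+\bxi)\) exactly.

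The difficulty is that Proposition~\ref{PropInputs}(d) yields uniform convergence only for \(\bP\)-typical full configurations \(\bC=\bC_{I_L}+\bC_{I_L^c}\). It says nothing directly about the substituted configuration \(\br_n+\bxi\). This transfer is the main obstacle.

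I would handle it using the cocycle identity. On the \(\bP\)-full set \(\mathscr A\) of Proposition~\ref{PropInputs}(d), also intersected with the rigidity event, set \(\bX=\bC_{I_L}\). The displayed identity then gives
\[
 A^{(p)}(\bxi,\betaeta)=\mathsf M_{I_L,\Lambda_p}(\betaeta,\bC)-\mathsf M_{I_L,\Lambda_p}(\br_n,\bC).
\]
Both terms converge, uniformly in \(\betaeta\) for the first and as a single number for the second, since \(\br_n\) lies in the fibre. Hence \(A^{(p)}(\bxi,\cdot)\) converges uniformly to a finite limit, and the limit agrees with the \(\limsup\) in \eqref{EqExteriorPotentialDefinition}.

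This property depends only on \(\bxi=\bC_{I_L^c}\). Define \(\mathscr X_{I_L}^{\mathrm{pot}}\) to be the set of \(\bxi\) for which \(A^{(p)}(\bxi,\cdot)\) is Cauchy uniformly on \(\Conf_{n_L(\bxi)}(I_L)\). This set is Borel: one can write it with countable quantifiers over rational tolerances and over \(p,p'\), with the supremum taken over a countable dense set of ordered particle coordinates, which is legitimate by the continuity established below. The argument above shows that this set contains the projection of a \(\bP\)-full set, so it is \(\balpha_L\)-full.

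Continuity on the fibre comes from uniform convergence. For each \(p\), the map \(\betaeta\mapsto A^{(p)}(\bxi,\betaeta)\) is continuous in the particle coordinates on \(I_L^n\) modulo permutations whenever \(\bxi\) has no atoms on \(\partial I_L\). That holds \(\balpha_L\)-a.s. by stationarity, which I would add to the definition of the full set. Interior points may approach boundary exterior atoms only if the latter sit at \(0\) or \(L\), which is excluded.

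Finally, for arbitrary \(\bX,\betaeta\) in the fibre, the cocycle identity \(\mathsf M_{I_L,\Lambda_p}(\betaeta,\bX+\bxi)=A^{(p)}(\bxi,\betaeta)-A^{(p)}(\bxi,\bX)\) converges uniformly in \(\betaeta\) to \(A_{\bxi}(\betaeta)-A_{\bxi}(\bX)\). This gives the first admissibility condition and \eqref{EqMoveCocycle}.

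It remains to check finiteness and positivity of \(Z\). The weight \(\exp[-\beta(\mathsf H+\mathsf M)]\) equals \(\prod_{i<j}|x_i-x_j|^\beta\) times the continuous, bounded-above and bounded-below function \(e^{-\beta A_{\bxi}}\) on the compact fibre, up to a constant. Since \(\prod_{i<j}|x_i-x_j|^\beta\) is bounded and positive almost everywhere on \(I_L^n\), this gives \(0<Z<\infty\). So \(\bX+\bxi\) is \((\beta,I_L)\)-admissible.
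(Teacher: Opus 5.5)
Your proposal is correct and follows the paper's proof essentially step for step: Borelness of the finite-cutoff maps, transfer of the DHLM uniform convergence to \(A^{(p)}(\bxi,\mathord\cdot)\) through the finite-cutoff cocycle with \(\bX=\bC_{I_L}\) and \(\betaeta=\br_n\), definition of the exterior set as the uniform-Cauchy set, and the partition-function bound from boundedness of \(A_{\bxi}\) on the compact fibre. The differences are cosmetic. You get Borelness of the uniform-Cauchy set by taking suprema over a countable dense subset of the fibre instead of invoking the measurable maximum theorem. Your extra requirement that \(\bxi\) have no atoms at \(0\) or \(L\) is automatic: \(\bxi\in\Conf(I_L^c)\) and \(I_L=[0,L]\) is closed, so for every \(\bxi\) the finitely many cutoff exterior atoms lie at positive distance from \(I_L\).
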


\begin{proof}
For fixed \(p\) and fixed particle number \(j\), the finite-cutoff expression
\(A^{(p)}(\bxi,\betaeta)\) is a parameterised integral of a Borel kernel
against finite restrictions of \(\bxi\) and \(\betaeta-\br_j\); hence it is
Borel.  For fixed \(\bxi\), the exterior configuration in the bounded cutoff
region is finite and disjoint from the compact interval \(I_L\).  Therefore
\(A^{(p)}(\bxi,\mathord\cdot)\) is continuous on
\(\Conf_j(I_L)\).  Since \(I_L\) is compact, every fibre
\(\Conf_j(I_L)\) is compact.  The measurable maximum theorem, applied
separately on each fibre, shows that
\[
 \bxi\longmapsto
 \sup_{\betaeta\in\Conf_{n_L(\bxi)}(I_L)}
 \left|A^{(p)}(\bxi,\betaeta)-A^{(q)}(\bxi,\betaeta)\right|
\]
is Borel.  Consequently, the set of exteriors for which
\(A^{(p)}(\bxi,\mathord\cdot)\) is uniformly Cauchy on the prescribed fibre
is Borel.

Disintegrating the full-probability assertion in
Proposition~\ref{PropInputs}\textup{(d)}, for
\(\balpha_L\)-almost every \(\bxi\) there is
\(\bX\in\Conf_{n_L(\bxi)}(I_L)\) for which the truncated move functions
converge uniformly in the replacement configuration.  For every
\(p\geq p_{I_L}\),
\[
 \mathsf M_{I_L,\Lambda_p}(\betaeta,\bX+\bxi)
 =
 A^{(p)}(\bxi,\betaeta)-A^{(p)}(\bxi,\bX).
\]
Taking \(\betaeta=\br_{n_L(\bxi)}\) first shows that
\(A^{(p)}(\bxi,\bX)\) converges.  The same identity then shows that
\(A^{(p)}(\bxi,\mathord\cdot)\) converges uniformly on the whole fibre.
Let \(\mathscr X_{I_L}^{\mathrm{pot}}\) be this Borel full set.  Its uniform
limit is the limsup in \eqref{EqExteriorPotentialDefinition}; it is finite
and continuous.

For arbitrary \(\bX,\betaeta\) in the prescribed fibre, put
\[
 \widetilde{\mathsf M}_{\bxi,\bX}(\betaeta)
 \defeq A_{\bxi}(\betaeta)-A_{\bxi}(\bX).
\]
The finite-cutoff identity and the preceding uniform convergence show that
\(\mathsf M_{I_L,\Lambda_p}(\mathord\cdot,\bX+\bxi)\) converges uniformly to
\(\widetilde{\mathsf M}_{\bxi,\bX}\).  Moreover, \(A_{\bxi}\) is bounded on
the compact fibre, while \(\mathsf H_{I_L}\) is bounded below and finite on a
binomial-full set.  The corresponding partition function is therefore
positive and finite.  Thus \(\bX+\bxi\) is
\((\beta,I_L)\)-admissible, and uniqueness of the uniform limit gives
\[
 \mathsf M_{I_L,\R}(\betaeta,\bX+\bxi)
 =
 A_{\bxi}(\betaeta)-A_{\bxi}(\bX).
\]
\end{proof}

For \(\bxi\in\mathscr X_{I_L}^{\mathrm{pot}}\), changing the deterministic
reference configuration from \(\br_j\) to another \(\br'_j\) gives, on the
prescribed fibre,
\[
 A'_{\bxi}(\betaeta)
 =A_{\bxi}(\betaeta)-A_{\bxi}(\br'_{n_L(\bxi)}).
\]
Indeed, uniform convergence in Lemma~\ref{LemExteriorPotential} makes the
corresponding finite-cutoff constants converge.  Thus the normalised Gibbs
kernel, the cocycle, and all differences of the functions
\(\widehat A_{\bxi}\) below
are unchanged.

The DLR equation and the move cocycle now identify the regular conditional law,
on a Borel full subset of \(\mathscr X_{I_L}^{\mathrm{pot}}\), with the canonical
Gibbs density.  This formula is used in the direct comparison below.

\begin{lem}
\label{LemConditionalGibbs}
For \(\bxi\in\mathscr X_{I_L}^{\mathrm{pot}}\), put
\[
 Z_{\bxi}\defeq
 \int_{\Conf_{n_L(\bxi)}(I_L)}
 \exp[-\beta(\mathsf{H}_{I_L}(\betaeta)+A_{\bxi}(\betaeta))]
 \Bin_{n_L(\bxi),I_L}(\dd\betaeta).
\]
Then \(0<Z_{\bxi}<\infty\), and there is a
\(\balpha_L\)-full Borel set
\(\mathscr X_{I_L}^\star\subset\mathscr X_{I_L}^{\mathrm{pot}}\) such that
\begin{equation}\label{EqConditionalGibbs}
 K_{L,\bxi}(\dd\betaeta)=
 \frac{1}{Z_{\bxi}}
 \exp[-\beta(\mathsf{H}_{I_L}(\betaeta)+A_{\bxi}(\betaeta))]
 \Bin_{n_L(\bxi),I_L}(\dd\betaeta)
\end{equation}
for every \(\bxi\in\mathscr X_{I_L}^\star\).
\end{lem}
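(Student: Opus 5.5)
First, the finiteness of the normaliser. For \(\bxi\in\mathscr X_{I_L}^{\mathrm{pot}}\), Lemma~\ref{LemExteriorPotential} says \(A_{\bxi}\) is finite and continuous on the compact fibre \(\Conf_{n_L(\bxi)}(I_L)\), hence bounded there. On a bounded interval the factor \(\exp[-\beta\mathsf H_{I_L}]=\prod_{i<j}|x_i-x_j|^\beta\) is bounded above. It is also strictly positive off the null diagonal set. Together these give \(0<Z_{\bxi}<\infty\).

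Next, the conditional law. Apply the DLR identity \eqref{EqDLR} with \(\Lambda=I_L\) and test function \(F(\bgamma)=G(\bgamma_{I_L})\,H(\bgamma_{I_L^c})\), with \(G,H\) bounded Borel. The kernel \(\mathsf G^\beta_{I_L,\R}(\dd\betaeta,\bgamma)\) is \(\overline{\scrF^{\mathrm{can}}_{I_L}}^{\,\bP}\)-measurable. By number rigidity (Proposition~\ref{PropInputs}(a)), \(\bgamma(I_L)=n_L(\bgamma_{I_L^c})\) almost surely. Hence the kernel agrees \(\bP\)-a.s.\ with a function of the exterior \(\bxi=\bgamma_{I_L^c}\) alone, and the DLR identity says exactly that \(\bxi\mapsto\mathsf G^\beta_{I_L,\R}(\cdot,\bxi+\bgamma_{I_L})\) is a version of the regular conditional law of \(\bC_{I_L}\) given \(\bC_{I_L^c}\).

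To identify this kernel with the right-hand side of \eqref{EqConditionalGibbs}, use the cocycle \eqref{EqMoveCocycle}: for \(\bX=\bgamma_{I_L}\) in the correct fibre,
\[
\mathsf M_{I_L,\R}(\betaeta,\bX+\bxi)=A_{\bxi}(\betaeta)-A_{\bxi}(\bX).
\]
The term \(A_{\bxi}(\bX)\) does not depend on \(\betaeta\), so it cancels between numerator and normaliser. The kernel therefore equals the explicit Gibbs measure \(\mathcal K_{\bxi}\) on the right of \eqref{EqConditionalGibbs}, and this measure depends only on \(\bxi\).

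We must also check that \(\bxi\mapsto\mathcal K_{\bxi}(B)\) is Borel on \(\mathscr X_{I_L}^{\mathrm{pot}}\). For each fixed \(n\), on \(\{n_L=n\}\) the map \((\bxi,\betaeta)\mapsto A_{\bxi}(\betaeta)\) is Borel by Lemma~\ref{LemExteriorPotential}. Tonelli then makes both \(\int_B e^{-\beta(\mathsf H+A_{\bxi})}\dd\Bin_{n,I_L}\) and \(Z_{\bxi}\) Borel in \(\bxi\). Gluing over the countably many values of \(n\) gives Borel measurability.

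Finally, compare with the fixed Borel kernel \(K_{L,\bxi}\). Both \(\bxi\mapsto K_{L,\bxi}\) and \(\bxi\mapsto\mathcal K_{\bxi}\) are regular conditional laws of \(\bC_{I_L}\) given \(\bC_{I_L^c}\), so by uniqueness of regular conditional probabilities they agree \(\balpha_L\)-a.s. To turn this into agreement on an explicit Borel set, take a countable \(\pi\)-system \(\{B_k\}\) generating the Borel \(\sigma\)-field of \(\Conf(I_L)\). Set
\[
\mathscr X^\star_{I_L}=\mathscr X^{\mathrm{pot}}_{I_L}\cap\bigcap_k\{\bxi:K_{L,\bxi}(B_k)=\mathcal K_{\bxi}(B_k)\}.
\]
This set is Borel and \(\balpha_L\)-full, and the \(\pi\)-\(\lambda\) theorem upgrades equality on every \(B_k\) to equality of the two measures for every \(\bxi\in\mathscr X^\star_{I_L}\).

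The main obstacle is the measurability bookkeeping. The DLR kernel is only measurable with respect to a completed \(\sigma\)-field, and \(\bgamma(I_L)\) equals the Borel function \(n_L(\bgamma_{I_L^c})\) only almost surely. The plan handles both by restricting throughout to the Borel full set where \(\bgamma(I_L)=n_L(\bgamma_{I_L^c})\) and \(\bxi\in\mathscr X^{\mathrm{pot}}_{I_L}\). On that set, the expression obtained from the cocycle is an explicit Borel function of \(\bxi\), so the completed-measurable DLR kernel is never used beyond the almost-sure identity it provides.
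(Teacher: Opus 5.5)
Your proposal is correct and follows essentially the same route as the paper. Boundedness of \(A_{\bxi}\) on the compact fibre, together with the lower bound on \(\mathsf H_{I_L}\), gives \(0<Z_{\bxi}<\infty\); the cocycle \eqref{EqMoveCocycle} cancels \(A_{\bxi}(\bX)\) on normalisation; the DLR identity on product test functions identifies the explicit Gibbs kernel (Borel by parameterised integration and pasting over \(n_L\)) as a regular conditional law; and uniqueness of regular conditional probabilities over a countable determining class yields the Borel full set \(\mathscr X_{I_L}^\star\), with only cosmetic differences (a \(\pi\)-system with the \(\pi\)--\(\lambda\) theorem instead of a convergence-determining class, and an explicit rigidity remark that the paper obtains implicitly through the cocycle).
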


\begin{proof}
For \(\bxi\in\mathscr X_{I_L}^{\mathrm{pot}}\), compactness of
\(\Conf_{n_L(\bxi)}(I_L)\) and continuity of \(A_{\bxi}\) make
\(A_{\bxi}\) bounded on the prescribed fibre.  Since
\(\mathsf H_{I_L}\) is bounded below there and is finite on a
\(\Bin_{n_L(\bxi),I_L}\)-full set, one has \(0<Z_{\bxi}<\infty\).

Joint Borelness from Lemma~\ref{LemExteriorPotential}, parameterised
integration, and countable pasting over \(n_L(\bxi)\) show that
\(\bxi\mapsto Z_{\bxi}\) is Borel on
\(\mathscr X_{I_L}^{\mathrm{pot}}\).  Set \(Z_{\bxi}=1\) off this set.
Define
\[
 \widehat K_{L,\bxi}(\dd\betaeta)
 =
 \frac{1}{Z_{\bxi}}
 \exp[-\beta(\mathsf H_{I_L}(\betaeta)+A_{\bxi}(\betaeta))]
 \Bin_{n_L(\bxi),I_L}(\dd\betaeta)
\]
on \(\mathscr X_{I_L}^{\mathrm{pot}}\), and extend it outside that set by
\(\delta_{\br_{n_L(\bxi)}}\).

For \(\bX\in\Conf_{n_L(\bxi)}(I_L)\), the cocycle gives
\[
 \mathsf H_{I_L}(\betaeta)
 +\mathsf M_{I_L,\R}(\betaeta,\bX+\bxi)
 =
 \mathsf H_{I_L}(\betaeta)
 +A_{\bxi}(\betaeta)-A_{\bxi}(\bX).
\]
The final term cancels on normalisation.  Thus the canonical DLR kernel at
\(\bX+\bxi\) is \(\widehat K_{L,\bxi}\).

Applying the DLR equation to product tests
\[
 F(\betaeta+\bxi)=\varphi(\betaeta)h(\bxi)
\]
shows that \(\widehat K_{L,\bxi}\) is a regular conditional law of
\(\bC_{I_L}\) given \(\bC_{I_L^c}\).  Uniqueness of regular conditional
probabilities gives equality with \(K_{L,\bxi}\) almost everywhere.
Intersecting the equality sets for a countable convergence-determining class
of \(\varphi\)'s produces a Borel \(\balpha_L\)-full set
\(\mathscr X_{I_L}^{\star}\) on which
\eqref{EqConditionalGibbs} holds.
\end{proof}

\subsection{The compensated exterior potential}

The purpose of this subsection is to separate the conditional Gibbs energy
into an intrinsic bulk energy and a gauge-invariant boundary correction.
Recall \(U_L\) from \eqref{EqUm}, and define
\[
 \widehat A_{\bxi}(\betaeta)
 \defeq
 A_{\bxi}(\betaeta)+\int_{I_L}U_L(x)\dd\betaeta(x).
\]
The added term compensates for the interaction with the uniform
background in \(I_L\).  With this choice, the conditional Gibbs energy
splits into the intrinsic bulk term
\(\frac12W_{I_L}^{\intE}\) and the remaining exterior term
\(\widehat A_{\bxi}\), up to a fibrewise constant. The function \(U_L\) is bounded and continuous on \(I_L\).  Hence
\((\bxi,\betaeta)\mapsto\widehat A_{\bxi}(\betaeta)\) is Borel, and, for
every \(\bxi\in\mathscr X_{I_L}^{\mathrm{pot}}\),
\(\widehat A_{\bxi}\) is bounded on
\(\Conf_{n_L(\bxi)}(I_L)\).

Expanding the definition of the intrinsic energy and using
\eqref{EqUm}, we obtain, for every
\(\bxi\in\mathscr X_{I_L}^{\mathrm{pot}}\) and every simple
\(\betaeta\in\Conf_{n_L(\bxi)}(I_L)\),
\begin{equation}\label{EqMasterFibreIdentity}
 \mathsf H_{I_L}(\betaeta)+A_{\bxi}(\betaeta)
 =
 \frac12W_{I_L}^{\intE}(\betaeta)
 +\widehat A_{\bxi}(\betaeta)
 -\frac12\iint_{I_L^2}\mathsf{g}(x-y)\dd x\dd y.
\end{equation}
The last term is constant on the particle-number fibre and therefore
disappears upon normalization.  Thus
\eqref{EqMasterFibreIdentity} rewrites the conditional law in
Lemma~\ref{LemConditionalGibbs} as a Gibbs law whose bulk potential is
\(\frac{\beta}{2}W_{I_L}^{\intE}\) and whose remaining exterior
potential is \(\beta\widehat A_{\bxi}\).

We record the integrability needed for the later entropy comparison.
For \(\bxi\in\mathscr X_{I_L}^{\star}\), the function
\(A_{\bxi}\) is bounded on the prescribed fibre, while
\(\mathsf H_{I_L}\) is bounded below and is finite outside the
binomial-null collision set.  Equation~\eqref{EqConditionalGibbs} and
the elementary bound
\(\sup_{t\geq0}t\mathrm e^{-\beta t}<\infty\), applied after a
fibrewise shift, therefore give
\[
 K_{L,\bxi}\bigl(
 |\mathsf H_{I_L}+A_{\bxi}|
 \bigr)<\infty.
\]
Together with \eqref{EqMasterFibreIdentity} and the boundedness of
\(\widehat A_{\bxi}\), this implies
\[
 K_{L,\bxi}\bigl(|W_{I_L}^{\intE}|\bigr)<\infty.
\]

Finally, on the Borel set of triples
\[
 \left\{(\bxi,\bX,\betaeta):
 \bxi\in\mathscr X_{I_L}^{\star},\
 \bX,\betaeta\in\Conf_{n_L(\bxi)}(I_L)\right\},
\]
define
\begin{equation}\label{EqBoundaryVariable}
 \begin{aligned}
 \sB_{I_L}(\betaeta,\bX,\bxi)
 &\defeq
 \widehat A_{\bxi}(\betaeta)-\widehat A_{\bxi}(\bX)\\
 &=
 \mathsf M_{I_L,\R}(\betaeta,\bX+\bxi)
 +\int_{I_L}U_L(x)\dd(\betaeta-\bX)(x).
 \end{aligned}
\end{equation}
The second equality follows from the cocycle identity
\eqref{EqMoveCocycle}.  Extend \(\sB_{I_L}\) by zero outside the
preceding set.  It is then a jointly Borel, real-valued function.
Changing the reference configuration used to define \(A_{\bxi}\)
adds the same constant to both terms in the first line of
\eqref{EqBoundaryVariable}; hence \(\sB_{I_L}\) is gauge invariant.

Thus \(\sB_{I_L}(\betaeta,\bX,\bxi)\) records the change in compensated
exterior energy when \(\bX\) is replaced by \(\betaeta\) on the same
particle-number fibre.  The first line of
\eqref{EqBoundaryVariable} is used in the direct Gibbs comparison,
whereas the second is the form estimated in
Section~\ref{SectionBoundary}.
\section{The circular ensemble, periodic comparison, and Palm limit}
\label{SectionCircularRecovery}

This section records the two inputs supplied by the circular ensemble.  Its
periodic Gibbs law is the reference measure in the direct relative-entropy
comparison, and its stationary periodic lift converges, together with its
reduced Palm law, to \(\Sine_\beta\).  We also compare its periodic energy
with the intrinsic interval energy and obtain the error estimate needed
below.

\subsection{The circular reference and its local limit}

Recall that \(b=\beta/2\).  For \(n\in\N\), put
\(\mathbb T_n=\R/n\mathbb Z\), and
let \(\lambda_n\) be Haar measure on \(\mathbb T_n\), normalised by
\(\lambda_n(\mathbb T_n)=n\).  Let \(\Conf(\mathbb T_n)\) be the space
of integer-valued Radon measures on \(\mathbb T_n\), endowed with its
vague Borel sigma-field, and put
\[
 \Conf_j(\mathbb T_n)\defeq
 \{\betaeta\in\Conf(\mathbb T_n):\betaeta(\mathbb T_n)=j\},
 \qquad
 \Delta_n\defeq\{(u,u):u\in\mathbb T_n\}.
\]
Whenever a configuration on \(I_n\) occurs in a circular expression below,
it is understood to be pushed forward under the quotient map
\(I_n\to\mathbb T_n\).  Define
\[
 \mathsf{g}_n(t)\defeq-\log\left(\left|2\sin\frac{\pi t}{n}\right|\right),
 \qquad
 \mathsf{h}_n(t)\defeq-\log\left(\left|\frac{n}{\pi}
 \sin\frac{\pi t}{n}\right|\right)
 \quad(t\notin n\mathbb Z),
\]
and set \(\mathsf{g}_n(t)=\mathsf{h}_n(t)=+\infty\) for \(t\in n\mathbb Z\).
Both kernels are \(n\)-periodic and hence descend to extended-real
functions on \(\mathbb T_n\).  If
\(\bzeta=\sum_{i=1}^n\delta_{u_i}\) is simple on \(\mathbb T_n\), put
\[
 \mathsf{H}_n^{\per}(\bzeta)\defeq\sum_{1\leq i<j\leq n}\mathsf{g}_n(u_i-u_j),
\]
and set \(\mathsf{H}_n^{\per}(\bzeta)=+\infty\) otherwise.  Put
\[
 Z_{n,\beta}^{\mathrm{circ}}\defeq
 \int_{\Conf_n(I_n)}
 \exp[-\beta \mathsf{H}_n^{\per}(\bzeta)]
 \Bin_{n,I_n}(\dd\bzeta).
\]
\begin{defn}\label{DefCircularEnsemble}
The circular \(\beta\)-ensemble is
\[
 \bQ_{n,\beta}(\dd\bzeta)\defeq
 \frac{\exp[-\beta \mathsf{H}_n^{\per}(\bzeta)]}{Z_{n,\beta}^{\mathrm{circ}}}
 \Bin_{n,I_n}(\dd\bzeta).
\]
Set \(\bQ_{0,\beta}=\delta_\varnothing\).
\end{defn}
Since \(\mathsf{g}_n\geq-\log2\), the circular Hamiltonian is bounded below.  It is
finite outside the binomial-null collision set, and hence (in this paper $\sim$ means mutually absolutely continuous and not asymptotic equivalence)
\[
 0<Z_{n,\beta}^{\mathrm{circ}}<\infty,
 \qquad
 \bQ_{n,\beta}\sim\Bin_{n,I_n}.
\]

We use the same symbol \(\bQ_{n,\beta}\) for the interval law in
Definition~\ref{DefCircularEnsemble} and for its quotient push-forward to
\(\Conf_n(\mathbb T_n)\).  Interval dilations, interval energies, and
fixed-cut coordinates refer to the former realization, whereas rotations,
Haar measure, \(W_n^{\per}\), and \(\operatorname{Per}_n\) refer to the
latter.  Here and below, the seam is the distinguished point of \(\mathbb T_n\)
obtained by identifying the two endpoints \(0\) and \(n\) of the interval
representative \(I_n\). Since \(\bQ_{n,\beta}\sim\Bin_{n,I_n}\), a sampled configuration has no
particle at the seam almost surely, and this convention causes no ambiguity.

For a simple \(\bzeta\in\Conf_n(\mathbb T_n)\), define
\[
 W_n^{\per}(\bzeta)\defeq
 \iint_{\mathbb T_n^2\setminus\Delta_n}
 \mathsf{h}_n(u-v)\dd(\bzeta-\lambda_n)(u)
 \dd(\bzeta-\lambda_n)(v),
\]
where only the point--point diagonal is removed, and set
\(W_n^{\per}(\bzeta)=+\infty\) when \(\bzeta\) is not simple on
\(\mathbb T_n\).  Since
\[
 \mathsf{h}_n=\mathsf{g}_n-c_n^{\mathrm{per}},\qquad
 c_n^{\mathrm{per}}\defeq\log\left(\frac{n}{2\pi}\right),\qquad
 \int_0^n\mathsf{g}_n(t)\dd t=0,
\]
the point--point, point--background, and background--background terms
equal, respectively,
\[
 2\mathsf{H}_n^{\per}-n(n-1)c_n^{\mathrm{per}},\qquad
 2n^2c_n^{\mathrm{per}},\qquad
 -n^2c_n^{\mathrm{per}}.
\]
Thus, putting everything together
\begin{equation}\label{EqPeriodicEnergyIdentity}
 W_n^{\per}=2\mathsf{H}_n^{\per}+n\log\left(\frac{n}{2\pi}\right).
\end{equation}
The elementary Gibbs-integrability argument also gives
\(\mathsf H_n^{\per}\in\mathrm L^1(\bQ_{n,\beta})\).  Consequently,
\eqref{EqPeriodicEnergyIdentity} yields
\[
 W_n^{\per}\in\mathrm L^1(\bQ_{n,\beta}).
\]
Set \(W_0^{\per}(\varnothing)=0\).

For \(\bzeta=\sum_{i=1}^n\delta_{u_i}\in\Conf_n(\mathbb T_n)\), choose
representatives \(u_i\in[0,n)\) and set
\[
 \operatorname{Per}_n(\bzeta)
 \defeq\sum_{i=1}^n\sum_{k\in\mathbb Z}\delta_{u_i+kn},
 \qquad
 \bP_n\defeq(\operatorname{Per}_n)_*\bQ_{n,\beta}.
\]
This does not depend on the representatives.  Rotation invariance of
\(\bQ_{n,\beta}\) makes \(\bP_n\) stationary, and its intensity is
\(n/n=1\).

We record the LS--DHLM finite-energy and local-convergence input together
with the Palm convergence needed below.
\begin{lem}\label{LemSineFiniteEnergy}
The stationary periodic lifts \(\bP_n\) defined above satisfy
\begin{equation}\label{EqCircularLocalAndPalmLimit}
 \bP_n\overset{\mathrm{w}}{\longrightarrow}\Sine_\beta,
 \qquad
 \bP_n^{!0}\overset{\mathrm{w}}{\longrightarrow}
 (\Sine_\beta)^{!0}.
\end{equation}
Moreover,
\[
 \cW^{\mathrm{div}}(\Sine_\beta)<\infty.
\]
\end{lem}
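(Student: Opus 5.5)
Three assertions need proof: $\bP_n\to\Sine_\beta$, $\bP_n^{!0}\to(\Sine_\beta)^{!0}$, and $\cW^{\mathrm{div}}(\Sine_\beta)<\infty$. I would take them in that order and lean on the imported literature wherever it applies directly.

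\emph{Convergence of the lifts.} The law of $\bP_n$ on any fixed window $[-M,M]$ agrees with the rescaled circular $\beta$-ensemble from the introduction. Indeed, rotation invariance lets us recentre the cut, and $\frac{N}{2\pi}\theta$ with $N=n$ maps the centred circle to $\mathbb T_n$. The periodic copies $u_i+kn$ with $k\neq0$ leave $[-M,M]$ once $n>2M$. Hence, for local tests $F$ supported in $[-M,M]$, we have $\E_{\bP_n}[F]=\E[F(\text{rescaled }C\beta E_n)]$ for $n$ large, and the Killip--Stoiciu/Valk\'o--Vir\'ag convergence \cite{VV} gives $\bP_n\to\Sine_\beta$ weakly in the vague topology.

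\emph{Palm convergence.} For a stationary simple process of intensity one, the Campbell formula
\[
 \E_{\bQ^{!0}}[F]=\E_{\bQ}\Bigl[\sum_{x\in\bC\cap[0,1]}F(\theta_x(\bC-\delta_x))\Bigr]
\]
expresses the Palm law through the process. Take $F$ bounded, continuous and local, say depending only on $[-M,M]$. Then $\bgamma\mapsto\sum_{x\in\bgamma\cap[0,1]}F(\theta_x(\bgamma-\delta_x))$ is a local functional of $\bgamma$ on $[-M,M+1]$. It is continuous at configurations with no atoms at $0$ or $1$, and it is bounded by $\|F\|_\infty\bgamma([0,1])$. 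Continuity $\Sine_\beta$-a.s. holds because $\Sine_\beta$ is stationary with intensity one, so it has no fixed atoms.

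It then suffices to prove uniform integrability of $\bgamma([0,1])$ under $\bP_n$. I would get this from the uniform second-moment bound $\sup_n\E_{\bP_n}[\bC([0,1])^2]<\infty$. This bound follows from the two-point correlation function of C$\beta$E: $\rho_2\le C$ uniformly on compacts. Alternatively, it follows from the bounded number variance of linear statistics of C$\beta$E. Since $\E[\bC([0,1])]=1$ for every $n$ and in the limit, weak convergence together with uniform integrability transfers the convergence to the Campbell expressions. This gives $\bP_n^{!0}\to(\Sine_\beta)^{!0}$, first on local test functions and then, by tightness, in law.

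\emph{Finite energy.} The bound $\cW^{\mathrm{div}}(\Sine_\beta)<\infty$ is exactly the content of \cite{LS,EHL}. There $\Sine_\beta$ is the minimiser of the free energy $\beta\cW+\mathrm{Ent}$, which has finite minimum value, and the electric fields used are divergence-compatible screened fields. The same finiteness appears in DHLM \cite{DHLM}. I would check that the normalisations in Definition~\ref{DefElectricEnergies} match \cite[(2.23)--(2.25)]{LS} and \cite[Definition~2.6]{EHL}. Finiteness of the minimum together with the universal lower bound $\widetilde{\cW}^{\mathrm{div}}\ge -C_{\mathrm{el}}$ then gives $\E_{\Sine_\beta}[\widetilde{\cW}^{\mathrm{div}}]<\infty$.

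The main obstacle I expect is the uniform integrability step for the Palm convergence, since this is the only place where quantitative C$\beta$E input beyond weak convergence is needed. I would first try a direct Gibbs estimate bounding $\bQ_{n,\beta}(\bC(J)\ge k)$ for $|J|=1$ by $e^{-ck^2}$, uniformly in $n$. The log-gas repulsion forces a cost of order $k^2\log k$ to pack $k$ points into a unit arc.
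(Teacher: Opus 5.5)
Your proposal is correct and follows essentially the same route as the paper. The local convergence comes from identifying the periodic lift with the rescaled C$\beta$E on fixed windows, the Palm convergence from the Campbell formula together with uniform integrability supplied by uniform second moments of local counts, and the finite energy is imported from \cite{LS}. The differences are cosmetic: you use the sharp window $\1_{[0,1]}$ (continuity away from atoms at $0,1$) where the paper uses a continuous weight $\varrho$ (continuity at simple configurations), and the uniform count bound you were unsure how to source is exactly \cite[Lemma~2.24]{DHLM} (or \cite[Theorem~1]{NV}), so no new Gibbs tail estimate is needed.
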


\begin{proof}
The finite-energy assertion follows from the energy bounds in
\cite[Corollary~1.2 and Sections~2.7.3--2.7.4]{LS}.  For the first convergence in
\eqref{EqCircularLocalAndPalmLimit}, after translating the fundamental period
to the centered cell, the present
circular ensemble $\bQ_{n,\beta}$ is exactly the finite periodic log-gas of
\cite[Definition~2.6]{DHLM}.  On every fixed compact set its periodic lift
agrees, for all sufficiently large \(n\), with the representative in that
cell.  Hence \cite[Proposition~2.9]{DHLM} gives the first convergence in
\eqref{EqCircularLocalAndPalmLimit}.  Moreover,
\cite[Lemma~2.24]{DHLM} gives, for every fixed bounded interval \(J\) and
all sufficiently large \(n\),
\[
 \E_{\bP_n}[\bC(J)^2]\leq C_{\beta,J}.
\]
Equivalently, this follows by adding the square of the rotation-invariant
mean \(\E_{\bP_n}[\bC(J)]=|J|\) to the variance bound in that lemma; the
finitely many remaining \(n\) are harmless.  Thus the second moments are
uniform in \(n\).

It remains to justify the Palm convergence, which does not follow from
unrooted local convergence explained above alone.  Let \(F:\Conf(\R)\to\R\) be bounded,
local, and continuous, and choose a non-negative
\(\varrho\in C_c(\R)\) with \(\int_\R\varrho(x)\dd x=1\).  Consider the
local Campbell observable
\[
 \mathcal K_{F,\varrho}(\bgamma)
 =\sum_{x\in\bgamma}\varrho(x)
 F\bigl(\theta_x(\bgamma-\delta_x)\bigr).
\]
For \(M\geq1\), put
\(\tau_M(z)=(-M)\vee(z\wedge M)\).  A local labelling of atoms shows that
\(\mathcal K_{F,\varrho}\) is continuous at every simple configuration;
the continuity and compact support of \(\varrho\) make atoms entering or
leaving its support contribute vanishingly.  Hence
\(\tau_M\circ\mathcal K_{F,\varrho}\) is a bounded local observable whose
discontinuity set is contained in the non-simple configurations.  The limit
is simple by Lemmas~\ref{LemDHLMKernelBridge} and \ref{LemSimplicity}, so local
convergence applies to each clipped observable.  Moreover,
\[
 |\mathcal K_{F,\varrho}(\bgamma)|
 \leq\|F\|_\infty\|\varrho\|_\infty
       \bgamma(\operatorname{supp}(\varrho)).
\]
The uniform local count second moments therefore give uniform integrability.
Choose a bounded open interval \(J_0\) containing
\(\operatorname{supp}(\varrho)\).  By the portmanteau theorem and the
preceding uniform bound,
\[
 \E_{\Sine_\beta}[\bC(J_0)^2]
 \leq
 \liminf_{n\to\infty}\E_{\bP_n}[\bC(J_0)^2]
 <\infty.
\]
Thus the clipping errors vanish both uniformly for the approximating laws
and under the limiting law.
First letting \(n\to\infty\) for fixed \(M\), and then \(M\to\infty\), gives
\[
 \E_{\bP_n}[\mathcal K_{F,\varrho}]
 \longrightarrow
 \E_{\Sine_\beta}[\mathcal K_{F,\varrho}].
\]
The Campbell identity and intensity one identify the two sides as
\(\bP_n^{!0}(F)\) and \((\Sine_\beta)^{!0}(F)\), respectively.  To record
tightness explicitly, let \(J\subset\R\) be a bounded interval, put
\(A=\operatorname{supp}(\varrho)\), and choose a compact interval \(K\)
containing \(A\cup(A+J)\).  Tonelli's theorem and the Campbell identity give
\[
 \E_{\bP_n^{!0}}[\bC(J)]
 =\E_{\bP_n}\left[
   \sum_{x\in\bC}\varrho(x)
   \bigl(\theta_x(\bC-\delta_x)\bigr)(J)
  \right]
 \leq \|\varrho\|_\infty
       \E_{\bP_n}[\bC(K)^2].
\]
The last quantity is uniformly bounded by the preceding local-count estimate,
so \(\{\bP_n^{!0}\}\) is locally tight.  Every subsequential limit agrees
with \((\Sine_\beta)^{!0}\) on bounded local continuous functions; in
particular, the compactly supported Laplace functionals form a
convergence-determining subclass.  This proves the second assertion of
\eqref{EqCircularLocalAndPalmLimit}.
\end{proof}

\subsection{From the periodic to the intrinsic energy}

The periodic energy uses the torus Green kernel and Haar background, whereas
the intrinsic energy uses the real-line logarithmic kernel and Lebesgue
background on \(I_n\). The purpose of this subsection is to quantitatively compare the error between the two energies.

Define the Borel function \(\mathsf{r}_n:[-n,n]\to\R\) by
\[
 \mathsf{r}_n(t)\defeq
 \log\left(\left|\frac{\sin(\pi t/n)}{\pi t/n}\right|\right)
 \quad(0<|t|<n),\qquad
 \mathsf{r}_n(-n)=\mathsf{r}_n(0)=\mathsf{r}_n(n)\defeq0.
\]
For \(\bzeta\in\Conf_n(I_n)\), put
\[
 \mathsf{Err}_n^{\mathrm{per}}(\bzeta)\defeq
 \iint_{I_n^2}\mathsf{r}_n(u-v)
 \dd(\bzeta-\mathrm{Leb}_{I_n})(u)
 \dd(\bzeta-\mathrm{Leb}_{I_n})(v),
\]
where the integral is the finite expansion into its point--point,
point--background, and background--background terms.  Since
\(\mathsf{h}_n(t)=\mathsf{g}(t)-\mathsf{r}_n(t)\) for \(0<|t|<n\), every simple
\(\bzeta\in\Conf_n(I_n)\) with no point at either endpoint
satisfies
\begin{equation}\label{EqPeriodicIntrinsicDifference}
 W_{I_n}^{\intE}(\bzeta)
 =W_n^{\per}(\bzeta)+\mathsf{Err}_n^{\mathrm{per}}(\bzeta).
\end{equation}
This applies \(\bQ_{n,\beta}\)-almost surely because
\(\bQ_{n,\beta}\ll\Bin_{n,I_n}\).
On the empty sector set
\(\mathsf{Err}_0^{\mathrm{per}}(\varnothing)=0\). For \(0<|t|<n\), define
\[
 k_n(t)\defeq-\mathsf{r}_n''(t)
 =\frac{\pi^2}{n^2}\csc^2\left(\frac{\pi t}{n}\right)-\frac1{t^2},
 \qquad k_n(0)\defeq\frac{\pi^2}{3n^2}.
\]
Whenever \(k_n(u-v)\) is integrated over \([0,n]^2\), we use the
almost-everywhere representative obtained by assigning an arbitrary value,
say zero, at \(u-v=\pm n\).  These two corner values do not affect any
Lebesgue integral.

Thus \(\mathsf{Err}_n^{\mathrm{per}}\) is the exact discrepancy between
the intrinsic interval energy and the periodic circular energy.  It
occurs under both laws in the direct Gibbs comparison, so it must be
controlled separately under the circular reference and under the
neutralised candidate configuration.

\begin{lem}\label{LemPeriodicIntrinsic}
There is \(C_\beta<\infty\) such that, for every \(n\in\N\),
the first intensity measure of \(\bQ_{n,\beta}\) is \(\lambda_n\),
\(\mathsf{Err}_n^{\mathrm{per}}\in\mathrm{L}^1(\bQ_{n,\beta})\), and
\begin{equation}\label{EqPeriodicIntrinsicBound}
 \E_{\bQ_{n,\beta}}\left[|\mathsf{Err}_n^{\mathrm{per}}|\right]
 \leq C_\beta\log^2(2+n).
\end{equation}
Moreover, every simple \(\bzeta\in\Conf_n(I_n)\) with no point at
\(0\) or \(n\) satisfies, for
\(Y_{\bzeta}(u)=\bzeta([0,u])-u\),
\begin{equation}\label{EqRIntegrationByParts}
 \mathsf{Err}_n^{\mathrm{per}}(\bzeta)
 =\int_0^n\int_0^n
 k_n(u-v)Y_{\bzeta}(u)Y_{\bzeta}(v)\dd u\dd v.
\end{equation}
\end{lem}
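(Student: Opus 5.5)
The four claims will be handled in order: the intensity, the identity \eqref{EqRIntegrationByParts}, integrability, and finally \eqref{EqPeriodicIntrinsicBound}.

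\emph{Intensity.} The Hamiltonian \(\mathsf H_n^{\per}\) depends only on differences modulo \(n\). The binomial reference law on \(\mathbb T_n\) is Haar-invariant. Hence \(\bQ_{n,\beta}\) is rotation invariant on \(\mathbb T_n\), so its first intensity is a rotation-invariant measure of total mass \(n\), which must be \(\lambda_n\).

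\emph{Integration by parts.} Fix a simple \(\bzeta\) with no point at \(0\) or \(n\), and put \(\omega=\bzeta-\mathrm{Leb}_{I_n}\). Then \(Y_{\bzeta}\) is bounded, has bounded variation, and vanishes at both endpoints. Near the endpoints it is explicit: \(Y_{\bzeta}(v)=-v\) for \(v\) near \(0\), and \(Y_{\bzeta}(u)=n-u\) for \(u\) near \(n\). The kernel \(\mathsf r_n\) is smooth on \((-n,n)\) and satisfies \(\mathsf r_n(t)=-\pi^2t^2/(6n^2)+O(t^4/n^4)\) at \(0\), so there is no diagonal singularity. It has only logarithmic singularities at \(t=\pm n\).

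I will truncate to \([\delta,n-\delta]^2\) and apply Stieltjes integration by parts in \(u\) and then in \(v\). Using \(\partial_u\partial_v\mathsf r_n(u-v)=-\mathsf r_n''(u-v)=k_n(u-v)\), this gives \(\iint k_nY_{\bzeta}(u)Y_{\bzeta}(v)\) plus boundary terms. The boundary terms involve \(Y_{\bzeta}(\delta)\), \(Y_{\bzeta}(n-\delta)=O(\delta)\) multiplied by \(\mathsf r_n\) or \(\mathsf r_n'\) evaluated near \(\pm n\). These are \(O(\delta\log\delta)\) and \(O(\delta\cdot\delta^{-1}\cdot\delta)\) respectively, so they vanish as \(\delta\downarrow0\).

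For the limit of the main term near the corners \(u-v\approx\pm n\), I will use \(k_n(t)\sim (n-|t|)^{-2}\). Writing \(a=n-u\) and \(b=v\), the integrand there is \(O(ab/(a+b)^2)\), which is integrable. Dominated convergence then yields \eqref{EqRIntegrationByParts}. The same corner analysis, applied to the finite expansion, shows \(\mathsf{Err}_n^{\per}\) is finite almost surely.

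\emph{Integrability and the bound.} I will take absolute values inside \eqref{EqRIntegrationByParts} and apply Cauchy--Schwarz, which gives
\[
\E_{\bQ_{n,\beta}}|\mathsf{Err}_n^{\per}|\leq\int_0^n\!\!\int_0^n|k_n(u-v)|\,\sigma(u)\sigma(v)\dd u\dd v,
\qquad \sigma(u)^2=\E_{\bQ_{n,\beta}}[Y(u)^2].
\]
This also proves \(\mathsf{Err}_n^{\per}\in\mathrm L^1\) once the right-hand side is finite. By the intensity claim, \(\sigma(u)^2\) is the number variance of an arc. Since the complementary arc has the same discrepancy up to sign, this variance depends on \(\min(u,n-u)\).

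The key input, and the step I expect to be the main obstacle, is a uniform-in-\(n\) bound on this circular number variance:
\[
\sigma(u)^2\leq C_\beta\min\{s,\log(2+s)\},\qquad s=\min(u,n-u).
\]
The small-\(s\) part follows from the bounded local second moments (the DHLM Lemma~2.24 input used in Lemma~\ref{LemSineFiniteEnergy}), since \(\E[\bC(J)^2]\lesssim |J|\) for short arcs by local Gibbs density bounds. The logarithmic growth for long arcs must be imported from, or proved along the lines of, the DHLM/LS rigidity estimates for the periodic gas. I would first try to check that the cited variance lemma indeed holds uniformly with log growth. Failing that, I would prove it via a loop-equation or Fourier computation of \(\Var\sum_j f(u_j)\) for smooth \(f\) under the circular ensemble, followed by smoothing of indicators.

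Granting this variance bound, I will split the integral into two regions.
\begin{itemize}
\item On \(|u-v|\leq 3n/4\), \(0\leq k_n\leq C/n^2\). Integrating against \(\log(2+n)\) over a region of area \(n^2\) gives \(C\log(2+n)\).
\item Near the two corners, with \(a=n-u\) and \(b=v\) (or symmetrically), \(k_n\lesssim(a+b)^{-2}\). The contribution is at most
\[
\int_0^n\!\!\int_0^n\frac{\sqrt{\min(a,\log(2+a))\min(b,\log(2+b))}}{(a+b)^2}\dd a\dd b\leq C\log^2(2+n).
\]
Near the origin the integrand is \(\lesssim\sqrt{ab}/(a+b)^2\), which is integrable. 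Elsewhere it is bounded by \(\log(2+n)\int\!\!\int_{a+b\ge1}(a+b)^{-2}\lesssim\log^2(2+n)\).
\end{itemize}
Together these give \eqref{EqPeriodicIntrinsicBound}.
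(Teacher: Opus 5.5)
Your plan matches the paper's proof: you use rotation invariance for the intensity, then a pathwise double Stieltjes integration by parts, then Cauchy--Schwarz against $\E[Y_{\bzeta}(u)^2]\leq C_\beta\min\{s,\log(2+s)\}$, and finally the same split into the $n^{-2}$ bulk part and the $(n-|u-v|)^{-2}$ corner part. The only technical difference is that you truncate the domain to $[\delta,n-\delta]^2$ where the paper caps the kernel near $\pm n$; both work because $Y_{\bzeta}$ is explicit and $O(\delta)$ near the endpoints. The variance input you flag as the obstacle is exactly what the paper imports: the logarithmic part is the uniform point-variance bound for the circular $\beta$-ensemble of Najnudel--Vir\'ag \cite[Theorem~1]{NV}, and the linear small-scale part comes from \cite[Lemma~2.24]{DHLM}.
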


\begin{proof}
We first work under \(\bQ_{n,\beta}\).  For the function
\(Y_{\bzeta}\) defined in the statement, the associated Stieltjes measure is
\(\dd Y_{\bzeta}=\dd(\bzeta-\mathrm{Leb}_{I_n})\).  The endpoints are
unoccupied \(\bQ_{n,\beta}\)-almost surely, while neutrality gives
\(\bzeta(I_n)=n\); hence \(Y_{\bzeta}(0)=Y_{\bzeta}(n)=0\).  For
\(0<|t|<n\), one has \(k_n(t)\geq0\).
There is a universal
\(C<\infty\) such that
\begin{equation}\label{EqKernelBound}
 0\leq k_n(t)\leq C\left(\frac1{n^2}
 +\frac1{(n-|t|)^2}\right),
 \qquad |t|<n.
\end{equation}
Indeed, \(|\sin x|\leq|x|\) gives non-negativity.  If
\(|t|\leq n/2\), the function
\(\csc^2x-x^{-2}\), extended continuously at zero, is bounded on
\([-\pi/2,\pi/2]\).  If \(n/2<|t|<n\), then
\[
 \sin\frac{\pi|t|}{n}
 =\sin\frac{\pi(n-|t|)}{n}
 \geq\frac{2(n-|t|)}{n},
\]
which proves \eqref{EqKernelBound}.

Both \(\Bin_{n,I_n}\) and \(\mathsf{H}_n^{\per}\) are invariant under
translations of \(\mathbb T_n\).  Hence the first intensity measure of
\(\bQ_{n,\beta}\) is translation-invariant.  Its mass is \(n\), so
uniqueness of Haar measure gives the first intensity measure
\(\lambda_n\).
If \(\bzeta=\sum_{i=1}^n\delta_{u_i}\) and
\(\phi_i=2\pi u_i/n\), then, relative to independent uniform angles,
the density of \(\bQ_{n,\beta}\) is proportional to
\[
 \prod_{1\leq i<j\leq n}
 \left|2\sin\frac{\phi_i-\phi_j}{2}\right|^\beta
 =\prod_{1\leq i<j\leq n}
 |\mathrm{e}^{\mathrm{i}\phi_i}-\mathrm{e}^{\mathrm{i}\phi_j}|^\beta.
\]
Thus this is exactly the circular \(\beta\)-ensemble of
\cite[Theorem~1]{NV}; after translation by \(-n/2\), it is also the
finite periodic log-gas of \cite[Definition~2.6]{DHLM}.  Moreover,
\(\bQ_{n,\beta}\ll\Bin_{n,I_n}\), so deterministic arc
endpoints are unoccupied almost surely.

Let
\[
 m(u)\defeq\min(u,n-u),\qquad
 \ell(s)\defeq\min(s,\log(2+s)).
\]
For an arc \(A\subset\mathbb T_n\), the circular count-minus-background
quantity is \(\bzeta(A)-\lambda_n(A)\).  Since
\(\bzeta(\mathbb T_n)-\lambda_n(\mathbb T_n)=0\) and endpoints carry no mass,
\[
 \bzeta(A)-\lambda_n(A)
 =-[\bzeta(A^c)-\lambda_n(A^c)].
\]
We may therefore replace \([0,u]\) by that arc or its complement,
whichever has length \(m(u)\), and rotate the shorter arc into the
centred fundamental domain.  After the angular rescaling
\(t\mapsto2\pi t/n\), with the fixed \(2\pi\) convention absorbed into
\(C_\beta\),
\cite[Theorem~1]{NV} gives
\[
 \Var_{\bQ_{n,\beta}}(\bzeta([0,u]))
 \leq C_\beta\log(2+m(u)),
\]
and \cite[Lemma~2.24]{DHLM} gives
\[
 \Var_{\bQ_{n,\beta}}(\bzeta([0,u]))
 \leq C_\beta m(u).
\]
Consequently,
\begin{equation}\label{EqCBEVariance}
 \E_{\bQ_{n,\beta}}\left[Y_{\bzeta}(u)^2\right]
 \leq C_\beta\ell(m(u)),\qquad 0\leq u\leq n.
\end{equation}

Fix \(\chi_{\mathrm{cap}}\in C^\infty([0,\infty);[0,1])\) which vanishes on
\([0,1]\) and equals one on \([2,\infty)\).  For
\(0<\delta<n/4\), define the even function
\[
 \mathsf{r}_{n,\delta}(t)\defeq
 \begin{cases}
 \mathsf{r}_n(n-2\delta),&n-|t|\leq\delta,\\
 \displaystyle
 \chi_{\mathrm{cap}}\left(\frac{n-|t|}{\delta}\right)\mathsf{r}_n(t)
 +\left[1-\chi_{\mathrm{cap}}\left(\frac{n-|t|}{\delta}\right)\right]
 \mathsf{r}_n(n-2\delta),&n-|t|>\delta.
 \end{cases}
\]
Then \(\mathsf{r}_{n,\delta}\in C^2([-n,n])\), it agrees with \(\mathsf{r}_n\) on
\([-n+2\delta,n-2\delta]\), and
\begin{equation}\label{EqCappedKernelBound}
 |\mathsf{r}_{n,\delta}''(t)|
 \leq C\left(\frac1{n^2}
 +\frac1{(n-|t|+\delta)^2}\right).
\end{equation}
To verify the bound on the transition region, write
\(d=n-|t|\in[\delta,2\delta]\) and
\(\vartheta(t)=\chi_{\mathrm{cap}}(d/\delta)\).  Directly from the formula for
\(\mathsf{r}_n\),
\[
 |\mathsf{r}_n(t)-\mathsf{r}_n(n-2\delta)|\leq C,\qquad
 |\mathsf{r}_n'(t)|\leq C\delta^{-1},\qquad
 |\mathsf{r}_n''(t)|\leq C\delta^{-2},
\]
while \(|\vartheta'|\leq C\delta^{-1}\) and
\(|\vartheta''|\leq C\delta^{-2}\).  Therefore
\[
 \left|[\vartheta(t)
 (\mathsf{r}_n(t)-\mathsf{r}_n(n-2\delta))]''\right|
 \leq C\delta^{-2},
\]
which is \eqref{EqCappedKernelBound} on the transition region.  In the
endpoint cap the function is constant, and on
\([-n+2\delta,n-2\delta]\) the assertion follows from
\eqref{EqKernelBound}.

Two Stieltjes integrations by parts, using
\(Y_{\bzeta}(0)=Y_{\bzeta}(n)=0\), give
\begin{equation}\label{EqCappedIntegrationByParts}
 \iint_{I_n^2}\mathsf{r}_{n,\delta}(u-v)
 \dd Y_{\bzeta}(u)\dd Y_{\bzeta}(v)
 =
 -\int_0^n\int_0^n
 \mathsf{r}_{n,\delta}''(u-v)Y_{\bzeta}(u)Y_{\bzeta}(v)\dd u\dd v.
\end{equation}
Almost surely, there is \(\delta_{\mathrm{sep}}>0\) such that
\[
 Y_{\bzeta}(t)=-t,\qquad Y_{\bzeta}(n-t)=t
 \quad(0<t<\delta_{\mathrm{sep}}).
\]
For \(\delta<\delta_{\mathrm{sep}}/2\), the equality
\(\mathsf{r}_{n,\delta}(u-v)\neq \mathsf{r}_n(u-v)\) can hold only when
\(n-|u-v|<2\delta\).  Hence the exceptional set is contained in the two corner
triangles
\[
 \{(u,v)\in I_n^2:u-v>n-2\delta\}
 \quad\text{and}\quad
 \{(u,v)\in I_n^2:v-u>n-2\delta\}.
\]
Both variables then lie in the atom-free endpoint neighbourhoods from
 the preceding display.  Hence the signed measure \(\dd Y_{\bzeta}\) equals
\(-\mathrm{Leb}_{I_n}\) on each such neighbourhood, and its product
there equals \(\mathrm{Leb}_{I_n}^{\otimes2}\).  The difference between the
left-hand side of \eqref{EqCappedIntegrationByParts} and \(\mathsf{Err}_n^{\mathrm{per}}\) is
therefore an ordinary Lebesgue integral over those two triangles.  At
the corner \(u=n-s\), \(v=t\),
\[
 |\mathsf{r}_{n,\delta}(n-s-t)-\mathsf{r}_n(n-s-t)|
 \leq C\left[1+\left|\log\left(\frac{s+t}{\delta}\right)\right|\right]
\]
on \(s,t\geq0\), \(s+t\leq2\delta\).  Thus each corner contributes at
most
\[
 C\int_{\substack{s,t\geq0\\s+t\leq2\delta}}
 \left[1+\left|\log\left(\frac{s+t}{\delta}\right)\right|\right]
 \dd s\dd t
 \leq C\delta^2.
\]
The left-hand side of \eqref{EqCappedIntegrationByParts} therefore
converges almost surely to \(\mathsf{Err}_n^{\mathrm{per}}\).

By Cauchy--Schwarz and \eqref{EqCBEVariance},
\begin{equation}\label{EqYProductBound}
 \E_{\bQ_{n,\beta}}\left[|Y_{\bzeta}(u)Y_{\bzeta}(v)|\right]
 \leq C_\beta\sqrt{\ell(m(u))\ell(m(v))}.
\end{equation}
Set
\[
 \mathcal I_n\defeq
 \int_0^n\int_0^n
 \left(\frac1{n^2}+\frac1{(n-|u-v|)^2}\right)
 \sqrt{\ell(m(u))\ell(m(v))}\dd u\dd v.
\]
The first term satisfies
\[
 \frac1{n^2}
 \left(\int_0^n\sqrt{\ell(m(u))}\dd u\right)^2
 \leq\log(2+n).
\]
For the second term, use symmetry, restrict to \(u\geq v\), and put
\(s=n-u\), \(t=v\).  Since \(s+t=n-|u-v|\),
\(m(u)\leq s\), and \(m(v)\leq t\), a second symmetry and an enlargement
of the integration domain give
\begin{align*}
 \mathcal I_n
 &\leq\log(2+n)
 +4\int_0^n\frac{\sqrt{\ell(s)}}{s^2}
 \left(\int_0^s\sqrt{\ell(t)}\dd t\right)\dd s\\
 &\leq\log(2+n)+4\int_0^n\frac{\ell(s)}{s}\dd s
 \leq C\log^2(2+n).
\end{align*}
The second inequality uses the monotonicity of \(\ell\), and the
integrand at zero is defined by continuity.

For almost every \(|t|<n\), one has
\[
 \mathsf{r}_{n,\delta}''(t)\longrightarrow \mathsf{r}_n''(t)
 \quad\text{as }\delta\downarrow0,
\]
and \eqref{EqKernelBound} and \eqref{EqCappedKernelBound} give the
common bound
\[
 |\mathsf{r}_{n,\delta}''(t)-\mathsf{r}_n''(t)|
 \leq C\left(\frac1{n^2}
 +\frac1{(n-|t|)^2}\right).
\]
Equations~\eqref{EqCappedKernelBound} and \eqref{EqYProductBound}, and
the finiteness of \(\mathcal I_n\), permit dominated convergence in
\[
 \begin{aligned}
 &\E_{\bQ_{n,\beta}}\left[\left|
 \int_0^n\int_0^n
 (\mathsf{r}_{n,\delta}''-\mathsf{r}_n'')(u-v)
 Y_{\bzeta}(u)Y_{\bzeta}(v)\dd u\dd v\right|\right]\\
 &\quad\leq
 \int_0^n\int_0^n
 |(\mathsf{r}_{n,\delta}''-\mathsf{r}_n'')(u-v)|
 \E_{\bQ_{n,\beta}}\left[|Y_{\bzeta}(u)Y_{\bzeta}(v)|\right]\dd u\dd v.
 \end{aligned}
\]
The right-hand side tends to zero.  Hence the right-hand side of
\eqref{EqCappedIntegrationByParts} converges in
\(\mathrm{L}^1(\bQ_{n,\beta})\) to
\[
 -\int_0^n\int_0^n\mathsf{r}_n''(u-v)Y_{\bzeta}(u)Y_{\bzeta}(v)\dd u\dd v.
\]
The capped left-hand side of
\eqref{EqCappedIntegrationByParts} converges almost surely to \(\mathsf{Err}_n^{\mathrm{per}}\).
The displayed \(\mathrm{L}^1\)-convergence has an almost-surely
convergent subsequence, whose limit is the displayed double integral.
Uniqueness of almost-sure limits therefore identifies that integral
with \(\mathsf{Err}_n^{\mathrm{per}}\).  The capped-kernel argument before
taking expectations applies verbatim to every fixed simple endpoint-free
configuration: the two endpoint neighbourhoods contain no atoms, and all remaining integrals involve a bounded piecewise-affine function with
finitely many jumps.  More explicitly, at
an opposite corner \(u=n-s\), \(v=t\), one has
\(|Y_{\bzeta}(u)Y_{\bzeta}(v)|=st\) for all sufficiently small \(s,t\), while
\[
 k_n(u-v)|Y_{\bzeta}(u)Y_{\bzeta}(v)|
 \leq C\left(\frac{st}{n^2}+\frac{st}{(s+t)^2}\right)\leq C.
\]
The capped derivatives satisfy the corresponding bound.  Indeed,
\eqref{EqCappedKernelBound} and \eqref{EqKernelBound} give, on the same
corner,
\[
 \left|\mathsf{r}_{n,\delta}''(u-v)-\mathsf{r}_n''(u-v)\right|st
 \leq
 Cst\left(
 \frac1{n^2}+\frac1{(s+t+\delta)^2}+\frac1{(s+t)^2}
 \right)
 \leq C.
\]
The two exceptional corner triangles have total area \(O(\delta^2)\), so
their contribution tends to zero.
This supplies deterministic domination at the only singular corners.
Thus the identity is
the pathwise statement \eqref{EqRIntegrationByParts}, not merely a
\(\bQ_{n,\beta}\)-almost-sure identity.
Finally, \eqref{EqKernelBound}, \eqref{EqYProductBound},
\eqref{EqRIntegrationByParts}, and the estimate on \(\mathcal I_n\)
give \eqref{EqPeriodicIntrinsicBound}.
\end{proof}

\section{The exterior boundary estimate}\label{SectionBoundary}

This section controls the long-range interaction left after replacing the
conditional interior configuration by the rescaled circular reference law.  The main
result, Proposition~\ref{PropBoundary}, proves that its expected absolute size
is sublinear in the interval length.  This is the gauge-invariant boundary
term in the direct Gibbs comparison proving
Proposition~\ref{PropCircularEntropy}.

Fix \(L\geq1\).  For \(n\geq1\), put
\[
 \bR_{n,L}\defeq(T_{L/n})_*\bQ_{n,\beta},
 \qquad \bR_{0,L}\defeq\delta_\varnothing.
\]
The dilation sends \(\Bin_{n,I_n}\) to \(\Bin_{n,I_L}\); consequently
\(\bR_{n,L}\sim\Bin_{n,I_L}\).  Its first intensity is
\((n/L)\mathrm{Leb}_{I_L}\), and
\(n\mapsto\bR_{n,L}\) is a Borel kernel on the countable sector space.

\begin{defn}
 Define the replacement law by
\begin{equation}\label{EqReplacementJointLaw}
 \bP_L^{\mathrm{rep}}(\dd\bC,\dd\bY_L)
 \defeq
 \bP(\dd\bC)\bR_{\bC(I_L),L}(\dd\bY_L).
\end{equation}
Under this law put \(\bX_L=\bC_{I_L}\).    
\end{defn}
 By
Proposition~\ref{PropInputs}(a),
\(\bC(I_L)=n_L(\bC_{I_L^c})\) almost surely.
The image of the joint law under
\((\bC,\bY_L)\mapsto(\bC_{I_L^c},\bX_L,\bY_L)\) is
\[
 \balpha_L(\dd\bxi)K_{L,\bxi}(\dd\bX)
 \bR_{n_L(\bxi),L}(\dd\betaeta).
\]
All fibrewise statements below are understood for
\(\balpha_L\)-almost every \(\bxi\), after modification on an exterior
null set if necessary.
Thus \(\bX_L\) and \(\bY_L\) are conditionally independent given
\(\bC_{I_L^c}\), and
\[
 \sN_L=\bX_L(I_L)=\bY_L(I_L)=n_L(\bC_{I_L^c}),\qquad
 \bCh_L\defeq\bY_L-\bX_L,\qquad
 \bCh_L(I_L)=0.
\]
Under \(\bP_L^{\mathrm{rep}}\), the configuration \(\bY_L\) is a
rescaled circular replacement of the actual interior configuration
\(\bX_L\), in the same interval and with the same rigid particle number.
Consequently, the signed replacement charge
\(\bCh_L=\bY_L-\bX_L\) is exactly neutral, which removes the leading
far-field contribution to the exterior interaction.
\begin{defn}
At the right endpoint define
\[
 \sS_{\mathrm R}(s)\defeq\bCh_L((L-s,L]),\qquad
 \sD_{\mathrm{out}}^{\mathrm R}(t)\defeq \bC((L,L+t])-t.
\]
At the left endpoint define
\[
 \sS_{\mathrm L}(s)\defeq\bCh_L([0,s)),\qquad
 \sD_{\mathrm{out}}^{\mathrm L}(t)\defeq \bC([-t,0))-t.
\]
\end{defn}
Thus \(\sS_{\mathrm L},\sS_{\mathrm R}\) are cumulative replacement
discrepancies, while \(\sD_{\mathrm{out}}^{\mathrm L},
\sD_{\mathrm{out}}^{\mathrm R}\) are the original exterior count
discrepancies.
The first intensity measure of \(\bP\) is Lebesgue measure, so \(\bC\)
charges no deterministic point almost surely.  For \(n\geq1\),
\(\bR_{n,L}\ll\Bin_{n,I_L}\), while \(\bR_{0,L}=\delta_\varnothing\); hence
\(\bY_L\) also charges no deterministic point almost surely.

The maps defining \(\sS_{\mathrm L},\sS_{\mathrm R}\) and
\(\sD_{\mathrm{out}}^{\mathrm L},\sD_{\mathrm{out}}^{\mathrm R}\) are jointly Borel in the
configuration variables and their time arguments.  Indeed, each is the
integral of a jointly Borel indicator function against a counting
measure.  Consequently the corresponding non-negative absolute integrals
are measurable.  The signed infinite-tail integrals below are defined only
after Lemma~\ref{LemBoundaryRepresentation} proves their absolute
convergence.

We fix the Lebesgue--Stieltjes convention used below.  The signed
measures \(\dd\sS_{\mathrm R}\) and \(\dd\sS_{\mathrm L}\) are, respectively, the push-forward
of \(\bCh_L\) under \(x\mapsto L-x\) and the measure
\(\bCh_L\) itself on
\([0,L]\).  The signed measures \(\dd\sD_{\mathrm{out}}^{\mathrm R}\) and \(\dd\sD_{\mathrm{out}}^{\mathrm L}\) are the
push-forwards of
\[
 (\bC-\mathrm{Leb})_{(L,\infty)}
 \quad\text{under }y\mapsto y-L,
 \qquad
 (\bC-\mathrm{Leb})_{(-\infty,0)}
 \quad\text{under }y\mapsto-y,
\]
respectively.  Thus, for \(A\in\{\mathrm L,\mathrm R\}\),
\(0\leq s\leq L\), and \(t\geq0\),
\[
 \sS_A(s)=\dd\sS_A([0,s)),\qquad
 \sD_{\mathrm{out}}^A(t)=\dd\sD_{\mathrm{out}}^A((0,t]).
\]
Fixed endpoints are almost surely atom-free, so no Stieltjes boundary atom
occurs there.  Choices at variable jump times differ only on a countable
set of parameter values and do not affect the later Lebesgue integrals.
Recall that Proposition~\ref{PropInputs}(b)--(c) gives
\begin{equation}\label{EqVarianceAllScales}
 v(t)\leq C_{\bP}t\quad(t>0),\qquad
 \lim_{t\to\infty}\frac{v(t)}t=0.
\end{equation}

\subsection{Second moments of the replacement discrepancy}

The replacement law shares its total particle number with the original
interior but has controlled partial-count fluctuations.  This second-moment
bound is the input to the Stieltjes boundary estimate.

\begin{lem}\label{LemReplacementVariance}
There is \(C=C(\bP,\beta)<\infty\) such that, for every \(L\geq1\),
\(0\leq s\leq L\), and \(A\in\{\mathrm L,\mathrm R\}\),
\begin{equation}\label{EqReplacementVariance}
 \E_{\bP_L^{\mathrm{rep}}}\left[\sS_A(s)^2\right]
 \leq C_\beta s+2\frac{s^2}{L^2}v(L)+2v(s)
 \leq Cs.
\end{equation}
\end{lem}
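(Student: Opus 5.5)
The plan is to split the replacement discrepancy into a circular part and a candidate part, centred around the common linear interpolation $(s/L)n$. Take $A=\mathrm L$ and write $n=\sN_L$; the right endpoint is symmetric, by reflection $x\mapsto L-x$ of both $\bY_L$ and $\bX_L$, together with stationarity of $\bP$. Then
\[
 \sS_{\mathrm L}(s)=\bigl[\bY_L([0,s))-\tfrac{s}{L}n\bigr]-\bigl[\bX_L([0,s))-\tfrac{s}{L}n\bigr].
\]
The second bracket equals $\sD_s-\frac sL\sD_L=\mathsf{Br}_L(s)$, because $\bX_L=\bC_{I_L}$ and fixed endpoints are atom-free almost surely. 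Applying $(a+b)^2\leq2a^2+2b^2$, the task reduces to bounding the second moment of each bracket.

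\textbf{Candidate bracket.} I would not use the sharper bound \eqref{EqBridgeVarianceLinearBound} here. Instead, expand $\mathsf{Br}_L(s)^2\leq 2\sD_s^2+2(s/L)^2\sD_L^2$, and take expectations to get $2v(s)+2(s^2/L^2)v(L)$. After absorbing the factor $2$ from the first split into the constants, this produces exactly the last two terms displayed in \eqref{EqReplacementVariance}.

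\textbf{Circular bracket.} Condition on the exterior, equivalently on $n$. Given $n\geq1$, $\bY_L$ has law $\bR_{n,L}=(T_{L/n})_*\bQ_{n,\beta}$, so
\[
 \bY_L([0,s))=\bzeta([0,ns/L)),\qquad \bzeta\sim\bQ_{n,\beta}.
\]
By Lemma~\ref{LemPeriodicIntrinsic}, the first intensity of $\bQ_{n,\beta}$ is $\lambda_n$, so the conditional mean of this count is $ns/L$. The conditional second moment of the first bracket is therefore the variance of an arc count, which \eqref{EqCBEVariance} (from \cite[Lemma~2.24]{DHLM}) bounds by $C_\beta\,m(ns/L)\leq C_\beta\,ns/L$. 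Averaging over $n$ and using intensity one, $\E_\bP[n]=L$, gives $C_\beta s$. On the empty sector the bracket vanishes identically.

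The final inequality $\leq Cs$ then follows from \eqref{EqVarianceAllScales}: $v(s)\leq C_\bP s$, and $(s^2/L^2)v(L)\leq C_\bP s^2/L\leq C_\bP s$ since $s\leq L$.

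\textbf{Main obstacle.} The delicate point is uniformity of the circular variance bound: \eqref{EqCBEVariance} must hold with a constant independent of $n$ and valid for all $n\geq1$. The linear bound $\Var\leq C_\beta m(u)$ is used rather than the logarithmic one, because the linear form averages cleanly against $\E[n]=L$, whereas averaging the logarithmic form would require extra control of the law of $n$. The estimate from \cite{DHLM} is only stated for large $n$, so the finitely many small $n$ need separate treatment. For those, I would use the trivial bound $\Var\leq\E[\bzeta(\cdot)^2]\leq n^2$, combined with the fact that the variance is at most $n\cdot\min(1,\,\cdot)$, which is still $\leq C\,ns/L$ for bounded $n$. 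All constants are absorbed into $C_\beta$.
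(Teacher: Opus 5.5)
Your proposal follows the paper's proof. In both, you condition on $\bC$, bound the circular part by the arc-count variance estimate \eqref{EqCBEVariance} in its linear form averaged against $\E_{\bP}[\sN_L]=L$, and bound the candidate part by $2v(s)+2\frac{s^2}{L^2}v(L)$.

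The one step to correct is your initial split $(a+b)^2\leq 2a^2+2b^2$. It yields $2C_\beta s+4\frac{s^2}{L^2}v(L)+4v(s)$. The extra factor $2$ cannot be ``absorbed into the constants'', because the coefficients $2$ in the middle expression of \eqref{EqReplacementVariance} are explicit. The paper avoids this loss with one observation: your circular bracket has conditional mean zero given $\bC$, while the candidate bracket is $\bC$-measurable. So the cross term vanishes, and $\E[\sS_A(s)^2]$ equals exactly the averaged conditional variance of the replacement count plus $\E[(\frac sL\sD_L-(\bC(J)-s))^2]$, where $J$ is the boundary interval of length $s$. This gives the displayed constants; the final bound $\leq Cs$, which is all that is used later, holds either way.

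Two smaller remarks. First, $\bP$ is not assumed reflection-invariant, so for $A=\mathrm R$ do not appeal to a reflection symmetry of $\bX_L$. Use directly that stationarity gives $\E_{\bP}[(\bC((L-s,L])-s)^2]=v(s)$. Second, your small-$n$ patch is fine, though it is cleanest written as $\Var(X)\leq\E[X^2]\leq n\E[X]=n\cdot\frac{ns}{L}\leq n_0\frac{ns}{L}$ for a count $0\leq X\leq n$ with $n\leq n_0$. The paper does not need it, since it uses \eqref{EqCBEVariance} as stated uniformly for all $n\in\N$.
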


\begin{proof}
The assertion is immediate for \(s=0\).  For \(s=L\), exact neutrality
\(\bCh_L(I_L)=0\), together with the almost-sure absence of atoms at \(0\)
and \(L\), gives \(\sS_A(L)=0\) for either endpoint convention.  Fix
\(0<s<L\), condition on
\(\bC\), and write \(n=\sN_L\).  If \(n=0\), then
\(\sS_A(s)=0\).  Suppose that \(n\geq1\), and put
\[
 r\defeq\frac{ns}{L},\qquad a\defeq\min(r,n-r).
\]
Under the inverse dilation \(x\mapsto(n/L)x\), either boundary interval
has preimage an arc of length \(r\) in \(\mathbb T_n\).  Put
\[
 J_{L,s}^{\mathrm R}\defeq(L-s,L],\qquad
 J_{L,s}^{\mathrm L}\defeq[0,s).
\]
Conditionally on \(\bC\), the replacement \(\bY_L\) has law
\(\bR_{n,L}\), whose first intensity is
\((n/L)\mathrm{Leb}_{I_L}\).  Therefore
\[
 \E_{\bP_L^{\mathrm{rep}}}
 \left[\bY_L(J_{L,s}^A)\mid\bC\right]
 =\frac{ns}{L}=r.
\]
Under inverse dilation, the centered count
\(\bY_L(J_{L,s}^A)-r\) is the circular count discrepancy in an arc of
length \(r\).  Replacing that arc by its
complement when necessary and using rotation invariance,
\eqref{EqCBEVariance} gives
\[
 \Var(\bY_L(J_{L,s}^A)\mid\bC)
 \leq C_\beta\ell(a)\leq C_\beta a\leq C_\beta r.
\]

Let
\[
 \sD_{\mathrm{in}}^A(s)\defeq\bX_L(J_{L,s}^A)-s.
\]
Since \(|\sS_A(s)|\leq\sN_L\) and
\(\E_{\bP_L^{\mathrm{rep}}}\left[\sN_L^2\right]=L^2+v(L)<\infty\), the conditional second-moment identity
is applicable.  Moreover,
\[
 \E_{\bP_L^{\mathrm{rep}}}\left[\sS_A(s)\mid\bC\right]
 =r-\bX_L(J_{L,s}^A)
 =\frac{s}{L}\sD_L-\sD_{\mathrm{in}}^A(s).
\]
The \(\bC\)-marginal of \(\bP_L^{\mathrm{rep}}\) is \(\bP\), so
stationarity and deterministic endpoint atomlessness give
\[
 \E_{\bP_L^{\mathrm{rep}}}
 \left[(\sD_{\mathrm{in}}^A(s))^2\right]=v(s).
\]
Using this identity, \(\E_{\bP_L^{\mathrm{rep}}}[r]=s\), and
\eqref{EqVarianceAllScales}, we obtain
\begin{align*}
 \E_{\bP_L^{\mathrm{rep}}}[\sS_A(s)^2]
 &=
 \E_{\bP_L^{\mathrm{rep}}}\left[\Var(\bY_L(J_{L,s}^A)\mid\bC)\right]
 +\E_{\bP_L^{\mathrm{rep}}}\left[\left(\frac{s}{L}\sD_L-\sD_{\mathrm{in}}^A(s)\right)^2\right]\\
 &\leq C_\beta s
 +2\frac{s^2}{L^2}v(L)+2v(s)
 \leq Cs.
\end{align*}
This proves \eqref{EqReplacementVariance}.
\end{proof}

\subsection{Stieltjes representation}

The boundary error can be rewritten as the sum of two bilinear Stieltjes tail
integrals.  
For a finite signed measure \(\mu\), \(\|\mu\|_{\mathrm{TV}}\)
denotes its total variation norm; for a bounded function \(h\) on \(I_L\), put
\(\operatorname{osc}_{I_L}h=\sup_{I_L}h-\inf_{I_L}h\).

\begin{lem}\label{LemBoundaryRepresentation}
There is \(C=C(\bP,\beta)<\infty\) such that, for every \(L\geq1\) and
\(A\in\{\mathrm L,\mathrm R\}\),
\begin{equation}\label{EqBoundaryAbsolute}
 \E_{\bP_L^{\mathrm{rep}}}\left[\int_0^L\int_0^\infty
 \frac{|\sD_{\mathrm{out}}^A(t)\sS_A(s)|}{(s+t)^2}\dd t\dd s\right]
 \leq CL.
\end{equation}
The integrals
\[
 \widetilde{\sB}_{I_L}^A\defeq
 \int_0^L\int_0^\infty
 \frac{\sD_{\mathrm{out}}^A(t)\sS_A(s)}{(s+t)^2}\dd t\dd s
\]
are absolutely convergent almost surely and belong to \(\mathrm{L}^1\).
At the single point \((s,t)=(0,0)\), the displayed integrands are assigned
the value zero; this choice is immaterial for the Lebesgue integrals.
The
boundary variable in \eqref{EqBoundaryVariable} satisfies
\begin{equation}\label{EqBoundaryRepresentation}
 \sB_{I_L}(\bY_L,\bX_L,\bC_{I_L^c})
 =\widetilde{\sB}_{I_L}^{\mathrm R}+\widetilde{\sB}_{I_L}^{\mathrm L}
 \quad\text{almost surely}.
\end{equation}
\end{lem}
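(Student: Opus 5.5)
The plan has three parts: an absolute moment bound for the integrals, a double Stieltjes integration by parts at a finite cutoff, and a limit \(p\to\infty\) justified by Lemma~\ref{LemExteriorPotential}.

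\textbf{Absolute bound.} Fix \(A\in\{\mathrm L,\mathrm R\}\). By Tonelli's theorem and Cauchy--Schwarz, the expectation in \eqref{EqBoundaryAbsolute} is at most
\[
\int_0^L\int_0^\infty\frac{\sqrt{\E[\sD_{\mathrm{out}}^A(t)^2]\,\E[\sS_A(s)^2]}}{(s+t)^2}\,\dd t\,\dd s .
\]
Stationarity and \eqref{EqVarianceAllScales} give \(\E[\sD_{\mathrm{out}}^A(t)^2]=v(t)\leq C_{\bP}t\), and Lemma~\ref{LemReplacementVariance} gives \(\E[\sS_A(s)^2]\leq Cs\). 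The integrand is therefore at most \(C\sqrt{st}/(s+t)^2\). Integrating in \(t\) with \(t=s\tau\) gives
\[
C\int_0^\infty\frac{\sqrt\tau}{(1+\tau)^2}\,\dd\tau<\infty ,
\]
uniformly in \(s\). Integrating over \(s\in[0,L]\) then yields \(CL\). This proves \eqref{EqBoundaryAbsolute}, almost sure absolute convergence, and \(\widetilde{\sB}^A_{I_L}\in\mathrm L^1\).

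\textbf{Representation at a finite cutoff.} Start from the second line of \eqref{EqBoundaryVariable}. By Lemma~\ref{LemExteriorPotential}, \(\mathsf M_{I_L,\R}(\bY_L,\bX_L+\bC_{I_L^c})\) is the limit of \(\mathsf M_{I_L,\Lambda_p}\), which equals
\[
\iint \mathsf g(x-y)\,\dd\bCh_L(x)\,\dd\bC(y)
\]
over \(I_L\times(\Lambda_p\setminus I_L)\). Because \(\bCh_L(I_L)=0\), the Lebesgue part of the exterior contributes
\[
-\iint_{I_L\times(\Lambda_p\setminus I_L)}\mathsf g(x-y)\,\dd y\,\dd\bCh_L(x).
\]
As \(p\to\infty\) this should cancel the compensator \(\int U_L\,\dd\bCh_L\) up to a constant that is killed by neutrality. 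Indeed, \(\int_{\R}\mathsf g(x-y)\,\dd y\) diverges, but the part independent of \(x\) integrates to zero against \(\bCh_L\), and \(\int_{I_L}\mathsf g(x-y)\,\dd y=U_L(x)\). Convergence here needs care and is handled by computing the combined kernel \(\int_{\Lambda_p}\mathsf g(x-y)\,\dd y\) explicitly.

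This leaves \(\iint\mathsf g(x-y)\,\dd\bCh_L(x)\,\dd(\bC-\mathrm{Leb})(y)\) over the two exterior half-lines truncated at \(\Lambda_p\). On the right side, write \(x=L-s\) and \(y=L+t\), so that \(\mathsf g(x-y)=-\log(s+t)\). Integrate by parts in \(s\) against \(\dd\sS_{\mathrm R}\); the boundary terms vanish because \(\sS_{\mathrm R}(0)=\sS_{\mathrm R}(L)=0\). Then integrate by parts in \(t\) against \(\dd\sD_{\mathrm{out}}^{\mathrm R}\); the boundary term at \(t=0\) vanishes and the term at the cutoff \(t_p\) remains. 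Since \(\partial_s\partial_t(-\log(s+t))=1/(s+t)^2\), this produces the truncated version of \(\widetilde{\sB}^{\mathrm R}_{I_L}\). The left side is handled in the same way. Both integrations by parts are legitimate because the finite-cutoff measures are finite and the kernel is smooth away from \(s=t=0\). Near that point, the absence of atoms at \(0\) and \(L\) makes \(\sS_A\) and \(\sD^A_{\mathrm{out}}\) equal to \(\mp\) Lebesgue measure, so the double integral is absolutely convergent, as in the corner argument of Lemma~\ref{LemPeriodicIntrinsic}.

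\textbf{Removing the cutoff (main obstacle).} The step I expect to be hardest is letting \(p\to\infty\), specifically controlling the cutoff boundary term
\[
\sD^A_{\mathrm{out}}(t_p)\int_0^L\frac{\sS_A(s)}{s+t_p}\,\dd s .
\]
It is bounded by \(|\sD^A_{\mathrm{out}}(t_p)|\,L\sup|\sS_A|/t_p\). Since the energy assumption forces asymptotic density one, so \(\sD^A_{\mathrm{out}}(t)=o(t)\) almost surely, one might hope this tends to zero, but \(o(t)/t\to0\) only makes the term small, not necessarily vanishing, so I would use a Taylor expansion instead. Expanding \(1/(s+t_p)\) around \(1/t_p\), the leading term is \(\int_0^L\sS_A\,\dd s\), which is fixed in \(p\) and nonzero. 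I will check whether it is matched by a corresponding term in the Lebesgue compensator. The uniform convergence of the move functions in Lemma~\ref{LemExteriorPotential} guarantees that the full limit exists, and the tail integrals converge absolutely almost surely along a subsequence by the first step. Identifying these two limits, possibly along \(p\in\N\) chosen with \(\sD^A_{\mathrm{out}}(t_p)\) small relative to \(t_p\), gives \eqref{EqBoundaryRepresentation}.
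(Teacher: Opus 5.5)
Your first two steps match the paper. These are the Tonelli/Cauchy--Schwarz bound using \(v(t)\leq C_{\bP}t\) and Lemma~\ref{LemReplacementVariance}, and the finite-cutoff identity obtained by absorbing the Lebesgue compensator into \(V_p(x)=\int_{\Lambda_p}\mathsf g(x-y)\,\dd y\) and integrating by parts twice. One small correction: \(\bCh_L\) is purely atomic, so \(\sS_A\) vanishes identically near \(s=0\) rather than equalling a Lebesgue ramp. This is why the corner singularity of \(-\log(s+t)\) is never reached.

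The gap is the cutoff-removal step, which you leave open and misdiagnose. Your own bound already closes it. Since \(|\sS_A|\leq\sN_L\), one has \(|\mathsf U_L^{\mathrm{rep}}(L+t)|\leq L\sN_L/t\), so the boundary term is at most \(L\sN_L|\sD_{\mathrm{out}}^A(t_p)|/t_p\). For a fixed configuration this tends to zero as soon as \(\sD_{\mathrm{out}}^A(t)/t\to0\); that is exactly what \(o(t)/t\to0\) means. Your ``fixed nonzero'' Taylor term is really \(t_p^{-1}\sD_{\mathrm{out}}^A(t_p)\int_0^L\sS_A(s)\,\dd s\), which vanishes for the same reason. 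Nothing in the Lebesgue compensator needs matching either. It has been entirely absorbed into \(\int_{I_L}V_p\,\dd\bCh_L\), which by neutrality is at most \(2\sN_L\operatorname{osc}_{I_L}V_p\leq CL^2\sN_L/p\to0\).

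To complete this pathwise route you still need the almost-sure one-sided statement \(\sD_{\mathrm{out}}^A(t)=o(t)\). This is not literally the symmetric density-one property of finite-energy configurations. It does follow from \(v(t)\leq C_{\bP}t\) by Borel--Cantelli along \(t=k^2\), together with monotonicity of the counting function. The truncated double integrals then converge almost surely along the full sequence, with no subsequence needed, by the absolute convergence from your first step. The uniform move convergence gives an almost-sure limit of \(\sB_{I_L,p}\), and equating the two limits yields the representation.

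The paper instead stays in \(\mathrm L^1\). It shows \(\|\mathsf U_L^{\mathrm{rep}}(L+R)\|_2\leq CL^{3/2}/R\), hence \(\E|\mathsf U_L^{\mathrm{rep}}(L+R)\sD_{\mathrm{out}}^{\mathrm R}(R)|\leq CL^{3/2}/\sqrt R\). It bounds the tail and \(V_p\) terms in expectation. It then identifies the \(\mathrm L^1\) limit with the almost-sure limit of \(\sB_{I_L,p}\) by uniqueness of limits in probability. This uses only the second-moment inputs and no pathwise density statement.
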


\begin{proof}
Almost surely, \(\bCh_L\) is a finite signed point measure of total mass
zero with no atom at \(0\) or \(L\).  Finiteness and endpoint atomlessness
make each \(\sS_A\) vanish in a neighbourhood of \(s=0\); exact neutrality
then makes it vanish in a neighbourhood of \(s=L\).  In particular, each
\(\sS_A\) is of bounded variation.

For \(y\in I_L^c\), put
\[
 \mathsf U_L^{\mathrm{rep}}(y)\defeq
 \int_{I_L}\mathsf{g}(x-y)\dd\bCh_L(x).
\]
Since each \(\sS_A\) vanishes on a neighbourhood of \(0\), the
representations below extend to absolutely continuous functions on every
compact interval \([0,R]\).  In particular, the Stieltjes integrations by
parts are legitimate at \(t=0\).  Thus, for \(y=L+t>L\), since
\(\sS_{\mathrm R}(0)=\sS_{\mathrm R}(L)=0\), Stieltjes integration by parts gives
\begin{equation}\label{EqPsiRepresentation}
 \mathsf U_L^{\mathrm{rep}}(L+t)=\int_0^L\frac{\sS_{\mathrm R}(s)}{s+t}\dd s,\qquad
 \frac{\dd}{\dd t}\mathsf U_L^{\mathrm{rep}}(L+t)
 =-\int_0^L\frac{\sS_{\mathrm R}(s)}{(s+t)^2}\dd s.
\end{equation}
For a finite exterior cutoff \(R\),
\begin{equation}
\begin{split}
 \int_{(L,L+R]}\mathsf U_L^{\mathrm{rep}}(y)\dd(\bC-\mathrm{Leb})(y)
 =
 \mathsf U_L^{\mathrm{rep}}(L+R)\sD_{\mathrm{out}}^{\mathrm R}(R)+\int_0^R\int_0^L
 \frac{\sD_{\mathrm{out}}^{\mathrm R}(t)\sS_{\mathrm R}(s)}{(s+t)^2}\dd s\dd t.
\end{split}
\label{EqStieltjesFinite}
\end{equation}
For \(y=-t<0\), Stieltjes integration by parts in the interior variable gives
\[
 \mathsf U_L^{\mathrm{rep}}(-t)=\int_0^L\frac{\sS_{\mathrm L}(s)}{s+t}\dd s,
 \qquad
 \frac{\dd}{\dd t}\mathsf U_L^{\mathrm{rep}}(-t)
 =-\int_0^L\frac{\sS_{\mathrm L}(s)}{(s+t)^2}\dd s.
\]
The push-forward under \(y\mapsto-y\) of
\((\bC-\mathrm{Leb})_{(-\infty,0)}\) has cumulative function
\(\sD_{\mathrm{out}}^{\mathrm L}\), and hence
\[
 \begin{aligned}
 \int_{[-R,0)}\mathsf U_L^{\mathrm{rep}}(y)\dd(\bC-\mathrm{Leb})(y)
 &=\int_{(0,R]}\mathsf U_L^{\mathrm{rep}}(-t)\dd\sD_{\mathrm{out}}^{\mathrm L}(t)\\
 &=\mathsf U_L^{\mathrm{rep}}(-R)\sD_{\mathrm{out}}^{\mathrm L}(R)
 -\int_0^R\sD_{\mathrm{out}}^{\mathrm L}(t)
 \frac{\dd}{\dd t}\mathsf U_L^{\mathrm{rep}}(-t)\dd t\\
 &=\mathsf U_L^{\mathrm{rep}}(-R)\sD_{\mathrm{out}}^{\mathrm L}(R)
 +\int_0^R\int_0^L
 \frac{\sD_{\mathrm{out}}^{\mathrm L}(t)\sS_{\mathrm L}(s)}{(s+t)^2}\dd s\dd t.
 \end{aligned}
\]

Since the \(\bC\)-marginal of the replacement law is \(\bP\),
stationarity gives
\[
 \E_{\bP_L^{\mathrm{rep}}}
 \left[(\sD_{\mathrm{out}}^A(t))^2\right]=v(t).
\]
By Lemma~\ref{LemReplacementVariance},
\eqref{EqVarianceAllScales}, and Cauchy--Schwarz,
\[
 \int_0^L\int_0^\infty
 \frac{\E_{\bP_L^{\mathrm{rep}}}\left[|\sD_{\mathrm{out}}^A(t)\sS_A(s)|\right]}{(s+t)^2}\dd t\dd s
 \leq
 C\int_0^L\int_0^\infty
 \frac{\sqrt{st}}{(s+t)^2}\dd t\dd s
 =\frac{C\pi L}{2}.
\]
Tonelli's theorem proves \eqref{EqBoundaryAbsolute} and the asserted
absolute convergence.

Here \(\|\mathord\cdot\|_r\) denotes the \(\mathrm{L}^r\)-norm under the
joint law for \(r\in\{1,2\}\).  Also, for \(R\geq L\),
\eqref{EqPsiRepresentation} gives
\[
 \|\mathsf U_L^{\mathrm{rep}}(L+R)\|_2
 \leq C\int_0^L\frac{\sqrt s}{R+s}\dd s
 \leq C\frac{L^{3/2}}{R}.
\]
Thus
\[
 \E_{\bP_L^{\mathrm{rep}}}\left[|\mathsf U_L^{\mathrm{rep}}(L+R)
 \sD_{\mathrm{out}}^{\mathrm R}(R)|\right]
 \leq C\frac{L^{3/2}}{\sqrt R}\longrightarrow0.
\]
The same estimate holds at the left endpoint.  Moreover,
\[
 \int_0^L\int_R^\infty
 \frac{\sqrt{st}}{(s+t)^2}\dd t\dd s
 \leq
 2R^{-1/2}\int_0^L\sqrt{s}\dd s
 =\frac{4L^{3/2}}{3\sqrt R}.
\]
It follows from \eqref{EqStieltjesFinite} and its left analogue that the
centred right and left exterior integrals converge in
\(\mathrm{L}^1\), as their
cutoffs tend to infinity, to \(\widetilde{\sB}_{I_L}^{\mathrm R}\) and
\(\widetilde{\sB}_{I_L}^{\mathrm L}\), respectively.

For an integer \(p\geq4L\), put
\[
 \sB_{I_L,p}\defeq \mathsf{M}_{I_L,\Lambda_p}(\bY_L,\bC)
 +\int_{I_L}U_L(x)\dd\bCh_L(x).
\]
Put \(V_p(x)=\int_{\Lambda_p}\mathsf{g}(x-y)\dd y\).  Adding and subtracting
Lebesgue measure outside \(I_L\) gives
\begin{equation*}
 \sB_{I_L,p}
 =
 \iint_{I_L\times(\Lambda_p\setminus I_L)}
 \mathsf{g}(x-y)\dd\bCh_L(x)\dd(\bC-\mathrm{Leb})(y)
 +\int_{I_L}V_p(x)\dd\bCh_L(x),
\end{equation*}
Since
\(\bCh_L(I_L)=0\) and
\[
 V_p'(x)=\log\left(\frac{p/2-x}{p/2+x}\right),\qquad
 \sup_{x\in I_L}|V_p'(x)|\leq C\frac Lp,
\]
we have \(\operatorname{osc}_{I_L}V_p\leq CL^2/p\).  Also,
\(\|\bCh_L\|_{\mathrm{TV}}\leq\bY_L(I_L)+\bX_L(I_L)=2\sN_L\) and
\(\E_{\bP_L^{\mathrm{rep}}}\left[\sN_L\right]=L\), so
\[
 \E_{\bP_L^{\mathrm{rep}}}\left[\left|\int_{I_L}V_p(x)\dd\bCh_L(x)\right|\right]
 =
 \E_{\bP_L^{\mathrm{rep}}}\left[\left|\int_{I_L}[V_p(x)-V_p(0)]\dd\bCh_L(x)\right|\right]
 \leq C\frac{L^3}{p}.
\]
The right and left tail lengths are \(p/2-L\) and \(p/2\), respectively.
Since \(p\geq4L\), both lengths are at least \(L\), so the preceding
estimates for \(R\geq L\) apply.
The preceding tail and endpoint estimates yield
\[
 \left\|\sB_{I_L,p}
 -\widetilde{\sB}_{I_L}^{\mathrm R}-\widetilde{\sB}_{I_L}^{\mathrm L}\right\|_1
 \leq C\left(\frac{L^3}{p}
 +\frac{L^{3/2}}{\sqrt{p/2-L}}\right).
\]
Work on the common full set where rigidity, exterior admissibility, and the
 uniform move limit of Proposition~\ref{PropInputs}\textup{(d)} all hold.  On
this set the move converges uniformly over the \(\sN_L\)-particle fibre.
Consequently, \(\sB_{I_L,p}\) converges almost surely under the joint law to
the variable in \eqref{EqBoundaryVariable}.
The preceding \(\mathrm{L}^1\)-convergence and uniqueness of limits in
probability
prove \eqref{EqBoundaryRepresentation}.
\end{proof}

Using sublinear large-scale variance in the preceding representation, this
proposition shows that the expected boundary correction is \(o(L)\).  This is
the boundary estimate used in the proof of
Proposition~\ref{PropCircularEntropy}.

\begin{prop}\label{PropBoundary}
We have,
\begin{equation}\label{EqBoundarySmall}
 \lim_{L\to\infty}\frac1L\E_{\bP_L^{\mathrm{rep}}}\left[
 |\sB_{I_L}(\bY_L,\bX_L,\bC_{I_L^c})|\right]=0.
\end{equation}
\end{prop}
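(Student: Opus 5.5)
By Lemma~\ref{LemBoundaryRepresentation}, it suffices to show that for each \(A\in\{\mathrm L,\mathrm R\}\)
\[
 \frac1L\E_{\bP_L^{\mathrm{rep}}}\bigl[|\widetilde{\sB}_{I_L}^A|\bigr]\longrightarrow0 .
\]
The crude bound \eqref{EqBoundaryAbsolute} is only \(O(L)\). It comes from Cauchy--Schwarz with \(\|\sS_A(s)\|_2\leq C\sqrt s\) and \(\|\sD_{\mathrm{out}}^A(t)\|_2\leq C\sqrt t\). To gain \(o(L)\), we will use the sublinear variance \(v(t)=o(t)\) on large scales.

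\textbf{Step 1: rescale.} Put \(s=L\sigma\) and \(t=L\tau\). Then
\[
 \frac1L\E\int_0^L\int_0^\infty\frac{|\sD_{\mathrm{out}}^A(t)\sS_A(s)|}{(s+t)^2}\,\dd t\,\dd s
 \leq\int_0^1\int_0^\infty\frac{\|\sS_A(L\sigma)\|_2}{\sqrt L}\,\frac{\sqrt{v(L\tau)}}{\sqrt L}\,\frac{\dd\tau\,\dd\sigma}{(\sigma+\tau)^2}.
\]
The integrand has two factors to control.
\begin{itemize}
\item By Lemma~\ref{LemReplacementVariance}, \(\|\sS_A(L\sigma)\|_2/\sqrt L\leq C\sqrt\sigma\) uniformly in \(L\).
\item By \eqref{EqVarianceAllScales}, \(\sqrt{v(L\tau)/L}\leq C\sqrt\tau\) uniformly, and for every fixed \(\tau>0\) it tends to \(0\), since \(v(L\tau)/(L\tau)\to0\).
\end{itemize}
Hence the integrand is dominated by \(C\sqrt{\sigma\tau}/(\sigma+\tau)^2\). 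This dominating function is integrable on \((0,1)\times(0,\infty)\): the computation giving \(C\pi/2\) shows this.

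\textbf{Step 2: dominated convergence.} The integrand tends pointwise to \(0\) for every \(\tau>0\), because the second factor does and the first stays bounded. Dominated convergence then gives the claim for each endpoint. Summing over \(A\in\{\mathrm L,\mathrm R\}\) proves \eqref{EqBoundarySmall}.

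The tail \(\tau\to\infty\) needs no separate treatment. The domination \(\sqrt{\sigma\tau}/(\sigma+\tau)^2\sim\sqrt\sigma\,\tau^{-3/2}\) is integrable there, and the pointwise convergence holds for every \(\tau\) regardless.

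\textbf{Step 3: check that the factor bounds are legitimate.} There is little real obstacle; the one point to verify is that the Cauchy--Schwarz product bound is valid pointwise in \((s,t)\) before integrating. It is, because the expectation is taken inside by Tonelli. No use of the finer term \(2v(s)\) in Lemma~\ref{LemReplacementVariance} is needed, since the outer factor alone produces the vanishing.

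If one preferred to extract decay from \(\sS_A\) instead, the \(C_\beta s\) term in Lemma~\ref{LemReplacementVariance} is genuinely linear, so this would not work. This is why the vanishing has to come from the exterior discrepancy \(\sD_{\mathrm{out}}^A\).
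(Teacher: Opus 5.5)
Your proposal is correct and is essentially the paper's proof: Tonelli, then Cauchy--Schwarz pointwise in $(s,t)$ using Lemma~\ref{LemReplacementVariance} and $\E[(\sD_{\mathrm{out}}^A(t))^2]=v(t)$, then the rescaling $s=L\sigma$, $t=L\tau$, and finally dominated convergence against the integrable kernel $\sqrt{\sigma\tau}/(\sigma+\tau)^2$, with the vanishing supplied by $v(L\tau)/(L\tau)\to0$. The only cosmetic difference is that you bound the two endpoint terms separately, whereas the paper bounds their sum in one display.
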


\begin{proof}
By Lemmas~\ref{LemBoundaryRepresentation} and
\ref{LemReplacementVariance},
\[
 \E_{\bP_L^{\mathrm{rep}}}\left[
 |\sB_{I_L}(\bY_L,\bX_L,\bC_{I_L^c})|\right]
 \leq C\int_0^L\int_0^\infty
 \frac{\sqrt{s\,v(t)}}{(s+t)^2}\dd t\dd s.
\]
The discrepancy estimates \eqref{EqVarianceAllScales} and
Proposition~\ref{PropInputs}(c) show that
\(\sqrt{v(t)/t}\) is bounded and tends to zero as \(t\to\infty\).
After the substitutions \(s=Lu\) and \(t=Lr\),
\begin{equation}\label{EqBoundaryScaled}
 \frac{\E_{\bP_L^{\mathrm{rep}}}\left[
 |\sB_{I_L}(\bY_L,\bX_L,\bC_{I_L^c})|\right]}{L}
 \leq C\int_0^1\int_0^\infty
 \sqrt{\frac{v(Lr)}{Lr}}\frac{\sqrt{ur}}{(u+r)^2}\dd r\dd u,
\end{equation}
For every \(u>0\),
\[
 \int_0^\infty\frac{\sqrt{ur}}{(u+r)^2}\dd r
 =\int_0^\infty\frac{\sqrt x}{(1+x)^2}\dd x
 =\frac{\pi}{2};
\]
thus the kernel is integrable on \((0,1)\times(0,\infty)\).
For every \(r>0\), \(v(Lr)/(Lr)\to0\), while its square root is bounded.
Dominated convergence in \eqref{EqBoundaryScaled} proves
\eqref{EqBoundarySmall}.
\end{proof}

\section{The relative entropy estimate}
\label{SectionCircularEntropy}

This section proves the fundamental relative entropy estimate by comparing two
Gibbs laws on the same rigid particle-number fibre.  After pulling the
circular ensemble back to \(I_L\), the binomial reference and all
configuration-independent sector constants cancel.  The remaining terms are
the averaged dilation statistic, the periodic-to-intrinsic errors under the
two laws, and the gauge-invariant exterior boundary correction.

\subsection{The periodic error under the candidate law}

For \(a>0\), extend the kernel notation of
Lemma~\ref{LemPeriodicIntrinsic} by
\[
 k_a(t)
 \defeq
 \left(\frac{\pi}{a}\right)^2
 \csc^2\left(\frac{\pi t}{a}\right)-\frac1{t^2},
 \qquad 0<|t|<a,
\]
and put \(k_a(0)=\pi^2/(3a^2)\).  In square Lebesgue integrals we assign
the harmless value \(k_a(\pm a)=0\).  Recall that
\begin{equation}\label{EqKernelBoundContinuous}
 0\leq k_a(t)
 \leq
 C\left(
 \frac1{a^2}+\frac1{(a-|t|)^2}
 \right),
 \qquad |t|<a,
\end{equation}
with a universal constant \(C\).

\begin{lem}\label{LemActualPeriodicError}
For every \(L>0\),
\[
 \mathsf{Err}_{\sN_L}^{\mathrm{per}}(\bZ_L)
 \in\mathrm L^1(\bP).
\]
As \(L\to\infty\),
\begin{equation}\label{EqActualPeriodicError}
 \E_{\bP}\left[|\mathsf{Err}_{\sN_L}^{\mathrm{per}}(\bZ_L)|\right]=o(L).
\end{equation}
\end{lem}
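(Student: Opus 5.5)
The plan is to use the pathwise identity \eqref{EqRIntegrationByParts} of Lemma~\ref{LemPeriodicIntrinsic}, which holds for \emph{every} simple endpoint-free configuration in \(\Conf_n(I_n)\), and transport it back to \(I_L\) through the dilation. On \(\{\sN_L=n\geq1\}\), \(\bZ_L\) is almost surely simple with no atoms at \(0\) or \(n\) (Lemma~\ref{LemSimplicity} and endpoint atomlessness), and, as in the proof of Lemma~\ref{LemAveragedDilationStatistic}, \(Y_{\bZ_L}((n/L)s)=\mathsf{Br}_L(s)\). Substituting \(u=(n/L)s\), \(v=(n/L)s'\) and using the scaling \(k_n((n/L)t)=(L/n)^2k_L(t)\) (immediate from the formula for \(k_a\)) yields the pathwise identity
\[
 \mathsf{Err}_{\sN_L}^{\mathrm{per}}(\bZ_L)
 =\int_0^L\int_0^L k_L(s-s')\,\mathsf{Br}_L(s)\mathsf{Br}_L(s')\dd s\dd s',
\]
which is valid on \(\{\sN_L\geq1\}\) and trivially on the empty sector, where \(\mathsf{Br}_L\equiv0\). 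The point is that the random sector \(n\) disappears: the kernel is the deterministic \(k_L\).

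Next, take absolute values, apply Tonelli and then Cauchy--Schwarz, \(\E_{\bP}|\mathsf{Br}_L(s)\mathsf{Br}_L(s')|\leq\sqrt{q_L(s)q_L(s')}\). By \eqref{EqKernelBoundContinuous} this gives
\[
 \E_{\bP}\bigl|\mathsf{Err}_{\sN_L}^{\mathrm{per}}(\bZ_L)\bigr|
 \leq C\int_0^L\int_0^L\Bigl(\frac1{L^2}+\frac1{(L-|s-s'|)^2}\Bigr)\sqrt{q_L(s)q_L(s')}\dd s\dd s'.
\]
Finiteness of this bound gives the \(\mathrm L^1\) claim. To do so, insert \eqref{EqBridgeVarianceLinearBound}, \(q_L(s)\leq 2C_{\bP}\,m_L(s)\) with \(m_L(s)=s(L-s)/L\leq\min(s,L-s)\). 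The first kernel term contributes at most \(CL^{-2}(\int_0^L\sqrt{m_L})^2\leq CL\). For the second term, use the corner substitution from Lemma~\ref{LemPeriodicIntrinsic}, namely \(s=L-a\), \(s'=b\), so that \(L-|s-s'|=a+b\). It is bounded by \(C\int_0^L\int_0^L\sqrt{ab}/(a+b)^2\dd a\dd b\leq CL\). Everything is therefore \(O(L)\).

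To upgrade \(O(L)\) to \(o(L)\), rescale \(s=Lx\), \(s'=Ly\). The bound becomes \(L\) times
\[
 \iint_{[0,1]^2}\Bigl(1+\frac1{(1-|x-y|)^2}\Bigr)\sqrt{\frac{q_L(Lx)}{L}\frac{q_L(Ly)}{L}}\dd x\dd y .
\]
The integrand tends to zero pointwise on \((0,1)^2\) by \eqref{EqRescaledBridgeLimit}. It is dominated by \(C(1+(1-|x-y|)^{-2})\sqrt{x(1-x)y(1-y)}\), which is integrable: near the opposite corners, with \(1-x=a\) and \(y=b\), it behaves like \(\sqrt{ab}/(a+b)^2\), which is integrable on \([0,1]^2\). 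Dominated convergence then gives \eqref{EqActualPeriodicError}.

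The main obstacle I expect is the first step. One has to justify that \eqref{EqRIntegrationByParts} applies pathwise to \(\bZ_L\), which is not circular-ensemble distributed, including the corner singularity of \(k_n\). The last part of the proof of Lemma~\ref{LemPeriodicIntrinsic} explicitly establishes the identity for every fixed simple endpoint-free configuration, so it should apply directly. After that, the only care needed is the exact dilation bookkeeping, namely the Jacobian \((n/L)^2\) cancelling against the kernel scaling.
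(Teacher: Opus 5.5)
Your proposal is correct and matches the paper's proof essentially step for step. Like the paper, you transport the pathwise identity \eqref{EqRIntegrationByParts} through the dilation to get a deterministic $k_L$-kernel representation in $\mathsf{Br}_L$, then apply Tonelli and Cauchy--Schwarz, rescale to $[0,1]^2$, and conclude by dominated convergence with the corner-integrable majorant $\bigl(1+(1-|x-y|)^{-2}\bigr)\sqrt{x(1-x)y(1-y)}$.
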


\begin{proof}
Lemma~\ref{LemSimplicity} makes \(\bC\) simple almost surely.  A stationary
finite-intensity process has no point at a prescribed deterministic location,
so \(\bC\) has no point at \(0\) or \(L\) almost surely.  On
\(\{\sN_L=n\geq1\}\), put
\[
 Y_{\bZ_L}(u)=\bZ_L([0,u])-u,
 \qquad 0\leq u\leq n.
\]
The deterministic integration-by-parts identity in
Lemma~\ref{LemPeriodicIntrinsic} gives
\begin{equation}\label{EqActualPeriodicIBP}
 \mathsf{Err}_n^{\mathrm{per}}(\bZ_L)
 =
 \int_0^n\int_0^n
 k_n(u-v)Y_{\bZ_L}(u)Y_{\bZ_L}(v)
 \dd u\dd v.
\end{equation}
If \(u=ns/L\), then
\[
 Y_{\bZ_L}(u)
 =
 \bC([0,s])-\frac{ns}{L}
 =
 \sD_s-\frac{s}{L}\sD_L
 =
 \mathsf{Br}_L(s).
\]
Moreover,
\[
 k_n\left(\frac nL t\right)
 =
 \left(\frac Ln\right)^2k_L(t).
\]
Changing variables in \eqref{EqActualPeriodicIBP} therefore gives the exact representation
\begin{equation}\label{EqActualPeriodicBridge}
 \mathsf{Err}_{\sN_L}^{\mathrm{per}}(\bZ_L)
 =
 \int_0^L\int_0^L
 k_L(s-t)\mathsf{Br}_L(s)\mathsf{Br}_L(t)\dd s\dd t.
\end{equation}
On \(\{\sN_L=0\}\), one has
\(\sD_s=-s\) for \(0\leq s\leq L\), and hence \(\mathsf{Br}_L(s)=0\).
Thus \eqref{EqActualPeriodicBridge} also holds on the empty sector under
the preceding convention.

Define
\[
 \overline q_L(x)\defeq\frac{q_L(Lx)}{L},
 \qquad 0\leq x\leq1.
\]
Equation~\eqref{EqBridgeVarianceLinearBound} gives
\begin{equation}\label{EqRescaledBridgeBound}
 0\leq\overline q_L(x)
 \leq2C_{\bP}x(1-x).
\end{equation}
By \eqref{EqRescaledBridgeLimit},
\(\overline q_L(x)\to0\) for every \(0<x<1\).

Since \(k_L\geq0\), \eqref{EqActualPeriodicBridge}, Cauchy--Schwarz, and
Tonelli's theorem give
\[
 \E_{\bP}[|\mathsf{Err}_{\sN_L}^{\mathrm{per}}(\bZ_L)|]
 \leq
 \int_0^L\int_0^L
 k_L(s-t)\sqrt{q_L(s)q_L(t)}
 \dd s\dd t.
\]
Using
\[
 k_L(Lz)=L^{-2}k_1(z),
 \qquad
 q_L(Lx)=L\overline q_L(x),
\]
and rescaling the preceding inequality yields
\begin{equation}\label{EqActualPeriodicRescaled}
 \frac1L
 \E_{\bP}[|\mathsf{Err}_{\sN_L}^{\mathrm{per}}(\bZ_L)|]
 \leq
 \int_0^1\int_0^1
 k_1(x-y)\sqrt{\overline q_L(x)\overline q_L(y)}
 \dd x\dd y.
\end{equation}
By \eqref{EqKernelBoundContinuous} and
\eqref{EqRescaledBridgeBound}, the integrand is bounded by a constant
multiple of
\[
 \left(1+\frac1{(1-|x-y|)^2}\right)
 \sqrt{x(1-x)y(1-y)}.
\]
This function is integrable on \((0,1)^2\).  Only the two opposite corners
require verification.  Near \((x,y)=(1,0)\), put \(a=1-x\) and \(c=y\).
The singular part is bounded by
\[
 C\frac{\sqrt{ac}}{(a+c)^2}.
\]
On \(0<a,c<\delta\), the change of variables
\(r=a+c\), \(u=a/(a+c)\) has
\(\dd a\,\dd c=r\,\dd r\,\dd u\).  Enlarging its image to
\((0,2\delta)\times(0,1)\), the integral is bounded by a constant multiple of
\[
 \int_0^{2\delta}\int_0^1
 \sqrt{u(1-u)}\dd u\dd r<\infty
\]
for a sufficiently small \(\delta>0\).
The corner \((0,1)\) is identical.  Thus the right-hand side of
\eqref{EqActualPeriodicRescaled} is finite for every \(L>0\), and dominated
convergence proves \eqref{EqActualPeriodicError}.
\end{proof}

\subsection{Direct Gibbs comparison}

For \(n\geq1\), write
\[
 \mathsf T_{n,L}\betaeta\defeq(T_{n/L})_*\betaeta.
\]
Thus \(\mathsf T_{n,L}\) sends \(\bR_{n,L}\) to
\(\bQ_{n,\beta}\).

Define
\[
 \nu_{L,\bxi}
 =(T_{n_L(\bxi)/L})_*K_{L,\bxi}
 \quad\text{when }n_L(\bxi)\geq1,
 \qquad
 \nu_{L,\bxi}=\delta_\varnothing
 \quad\text{when }n_L(\bxi)=0.
\]
The law \(\nu_{L,\bxi}\) is the conditional candidate law transported
from \(I_L\) to the neutral interval \(I_{n_L(\bxi)}\).  It is therefore
the version of \(K_{L,\bxi}\) that lives on the same space as
\(\bQ_{n_L(\bxi),\beta}\). Since \(n_L\) is Borel with countable range and dilation is Borel on every
positive sector, \(\bxi\mapsto\nu_{L,\bxi}\) is a Borel probability kernel.
Moreover, observe that,
\[
 \int_{\Conf(I_L^c)}\nu_{L,\bxi}\,\balpha_L(\dd\bxi)
 =\operatorname{Law}_{\bP}(\bZ_L).
\]

We have the following bound for the relative entropy. Each term on the right hand side of \eqref{EqDirectFibreComparison} can then be controlled.

\begin{lem}\label{LemDirectGibbsComparison}
For \(\balpha_L\)-almost every \(\bxi\), put
\(n=n_L(\bxi)\) and \(D=n-L\).  If \(n\geq1\), then
\begin{equation}\label{EqDirectFibreComparison}
\begin{aligned}
 \Ent(K_{L,\bxi}|\bR_{n,L})
 &\leq b \bigg[
 -K_{L,\bxi}
   \bigl(\mathsf{Err}_n^{\per}\circ\mathsf T_{n,L}\bigr)
 +\E_{\bQ_{n,\beta}}\bigl[\mathsf{Err}_n^{\per}\bigr]-\frac{2D}{n}
 K_{L,\bxi}\bigl(\mathcal A_n\circ\mathsf T_{n,L}\bigr)
 \bigg]\\
 &+\beta
 \int_{\Conf(I_L)}K_{L,\bxi}(\dd\bX)
 \int_{\Conf(I_L)}\bR_{n,L}(\dd\bY)\,
 \sB_{I_L}(\bY,\bX,\bxi).
\end{aligned}
\end{equation}
On the sector \(n=0\), every quotient-dependent expression is assigned
the value zero; then
\(K_{L,\bxi}=\bR_{0,L}=\delta_\varnothing\), and both the entropy and
all remaining comparison terms are zero.
\end{lem}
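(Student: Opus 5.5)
The plan is the standard Gibbs comparison on a common reference measure, followed by inserting the dilation and periodic identities for the potentials. Fix \(\bxi\) in the full set \(\mathscr X_{I_L}^\star\) with \(n=n_L(\bxi)\geq1\).

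\emph{Gibbs form of both laws.} By Lemma~\ref{LemConditionalGibbs} and \eqref{EqMasterFibreIdentity}, \(K_{L,\bxi}=\mathrm e^{-V_K}\Bin_{n,I_L}/Z_K\) with \(V_K=bW_{I_L}^{\intE}+\beta\widehat A_{\bxi}\), where fibre constants are absorbed into \(Z_K\). Dilating Definition~\ref{DefCircularEnsemble} by \(T_{L/n}\), which sends \(\Bin_{n,I_n}\) to \(\Bin_{n,I_L}\), and using \eqref{EqPeriodicEnergyIdentity}, gives \(\bR_{n,L}=\mathrm e^{-V_R}\Bin_{n,I_L}/Z_R\) with \(V_R=bW_n^{\per}\circ\mathsf T_{n,L}\).

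\emph{Entropy bound.} Put \(\Phi=V_R-V_K\). Then
\[
 \log\frac{\dd K}{\dd\bR}=\Phi+\log\frac{Z_R}{Z_K},
\qquad
 \log\frac{Z_K}{Z_R}=\log\bR_{n,L}(\mathrm e^{\Phi})\geq\bR_{n,L}(\Phi)
\]
by Jensen. Hence \(\Ent(K|\bR)\leq K(\Phi)-\bR(\Phi)\).

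\emph{Expanding \(\Phi\).} This step needs integrability. \(K(|W_{I_L}^{\intE}|)<\infty\) was recorded after \eqref{EqMasterFibreIdentity}, and \(W_n^{\per}\in\mathrm L^1(\bQ_{n,\beta})\). The remaining cross integrabilities come from the pathwise identities together with boundedness of \(\widehat A_{\bxi}\), of \(\mathcal A_n\) on the fibre, and the \(\mathrm L^1\) bound of Lemma~\ref{LemPeriodicIntrinsic}.

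Apply \eqref{EqAffineIdentity} with \(\bzeta=\mathsf T_{n,L}\betaeta\), and combine it with \eqref{EqPeriodicIntrinsicDifference} (valid off the null endpoint/collision set for both laws, as both are \(\ll\Bin_{n,I_L}\)). This gives
\[
 W_n^{\per}(\mathsf T_{n,L}\betaeta)-W_{I_L}^{\intE}(\betaeta)
 =-\mathsf{Err}_n^{\per}(\mathsf T_{n,L}\betaeta)-\tfrac{2D}{n}\mathcal A_n(\mathsf T_{n,L}\betaeta)+c_{n,L}.
\]
So \(\Phi=b[-\mathsf{Err}\circ\mathsf T-\tfrac{2D}{n}\mathcal A_n\circ\mathsf T]-\beta\widehat A_{\bxi}+\mathrm{const}\), and the constant cancels in \(K(\Phi)-\bR(\Phi)\).

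The \(\mathsf{Err}\) terms give \(-K(\mathsf{Err}\circ\mathsf T)+\E_{\bQ_{n,\beta}}[\mathsf{Err}]\). The \(\mathcal A_n\) term under \(\bR\) equals \(\E_{\bQ_{n,\beta}}[\mathcal A_n]=0\), because the first intensity of \(\bQ_{n,\beta}\) is \(\lambda_n\) (Lemma~\ref{LemPeriodicIntrinsic}). The \(\widehat A\) terms give \(\beta[\bR(\widehat A_{\bxi})-K(\widehat A_{\bxi})]\). Since \(\widehat A_{\bxi}\) is bounded on the fibre, this is the double integral of \(\sB_{I_L}(\bY,\bX,\bxi)\) by \eqref{EqBoundaryVariable}. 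Together these give \eqref{EqDirectFibreComparison}.

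On the sector \(n=0\), both laws are \(\delta_\varnothing\) and everything vanishes.

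The main obstacle is justifying the splitting of \(K(\Phi)-\bR(\Phi)\) into separately finite terms. Jensen needs \(\Phi\in\mathrm L^1(\bR)\), and each piece must be integrable under both laws. I would first verify these integrabilities, mainly \(\mathsf{Err}\circ\mathsf T\in\mathrm L^1(K)\). That should follow because \(W_{I_L}^{\intE}\in\mathrm L^1(K)\), while \(W_n^{\per}\circ\mathsf T\) differs from it by bounded-on-fibre terms. For the latter I would use \(|W_n^{\per}-2\mathsf H_{I_n}\text{-type}|\) comparisons: the kernels \(\mathsf g\) and \(\mathsf h_n\) differ by the bounded-below \(\mathsf r_n\), whose singularity only appears at opposite endpoints. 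If needed, I would instead compare via \(\mathsf H_n^{\per}\leq\mathsf H_{I_n}+C_n\) on the fibre.
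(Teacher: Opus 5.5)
Your structure matches the paper's proof. You use the same $\Phi_{L,\bxi}=b[W_n^{\per}\circ\mathsf T_{n,L}-W_{I_L}^{\intE}]-\beta\widehat A_{\bxi}$ and the same Jensen bound $\Ent(K_{L,\bxi}|\bR_{n,L})\leq K_{L,\bxi}(\Phi_{L,\bxi})-\bR_{n,L}(\Phi_{L,\bxi})$, followed by the same affine expansion and the same cancellations.

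The gap is in the step you single out yourself, $\mathsf{Err}_n^{\per}\circ\mathsf T_{n,L}\in\mathrm L^1(K_{L,\bxi})$, and both justifications you offer are wrong.
\begin{itemize}
\item \emph{The bounded-difference claim.} By the identity you display, $W_n^{\per}\circ\mathsf T_{n,L}-W_{I_L}^{\intE}=-\mathsf{Err}_n^{\per}\circ\mathsf T_{n,L}+(\text{bounded on the fibre})$. So ``differs by bounded-on-fibre terms'' would mean $\mathsf{Err}_n^{\per}$ is bounded, and it is not. The function $\mathsf r_n\leq0$ is bounded above but not below, and $\mathsf r_n(t)\to-\infty$ as $|t|\to n$. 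Hence the point--point part $\sum_{i\neq j}\mathsf r_n(u_i-u_j)$ is unbounded below when two particles approach opposite ends of $I_n$. Knowing $W_{I_L}^{\intE}\in\mathrm L^1(K_{L,\bxi})$ only reduces the question to $W_n^{\per}\circ\mathsf T_{n,L}\in\mathrm L^1(K_{L,\bxi})$, which is the same question.
\item \emph{The fallback.} The inequality $\mathsf H_n^{\per}\leq\mathsf H_{I_n}+C_n$ is false. One has $\mathsf H_n^{\per}-\mathsf H_{I_n}=\sum_{i<j}[\log(n/2\pi)-\mathsf r_n(u_i-u_j)]$. This is unbounded above, since $-\mathsf r_n(t)\to+\infty$ as $|t|\to n$. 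It is bounded below by $\binom{n}{2}\log(n/2\pi)$, which only shows that $\mathsf{Err}_n^{\per}$ is bounded \emph{above}. You need control from below.
\end{itemize}

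The paper closes this step with Lemma~\ref{LemActualPeriodicError}. The bridge representation \eqref{EqActualPeriodicBridge} gives $\E_{\bP}[|\mathsf{Err}^{\per}_{\sN_L}(\bZ_L)|]<\infty$ for each fixed $L$. Disintegration over the exterior then gives $K_{L,\bxi}(|\mathsf{Err}_n^{\per}\circ\mathsf T_{n,L}|)<\infty$ after discarding a $\balpha_L$-null set.

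A more elementary repair fits your approach directly.
\begin{itemize}
\item By \eqref{EqConditionalGibbs}, $\dd K_{L,\bxi}/\dd\Bin_{n,I_L}\propto\prod_{i<j}|x_i-x_j|^\beta\,\mathrm e^{-\beta A_{\bxi}}$, which is bounded on the fibre.
\item One has $|\mathsf{Err}_n^{\per}|\leq C_n+\sum_{i\neq j}|\mathsf r_n(u_i-u_j)|$, and $\mathsf r_n$ has only logarithmic singularities at $\pm n$. Hence $\mathsf{Err}_n^{\per}\in\mathrm L^1(\Bin_{n,I_n})$.
\item Since $\mathsf T_{n,L}$ pushes $\Bin_{n,I_L}$ forward to $\Bin_{n,I_n}$, this gives the required integrability for every $\bxi\in\mathscr X_{I_L}^\star$.
\end{itemize}
With either fix, $\Phi_{L,\bxi}\in\mathrm L^1(K_{L,\bxi})\cap\mathrm L^1(\bR_{n,L})$, and the rest of your argument goes through as written.
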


\begin{proof}
If \(n=0\), rigidity and the definition of the reference give
\[
 K_{L,\bxi}=\bR_{0,L}=\delta_\varnothing,
\]
so the assertion is immediate.  Assume \(n\geq1\).  Let
\(\mathscr G_{n,L}\) be the Borel set of simple \(n\)-point configurations in
\(I_L\) having no point at either endpoint.  Both \(K_{L,\bxi}\) and
\(\bR_{n,L}\) assign full mass to this set and are equivalent there to
\(\Bin_{n,I_L}\).

On \(\mathscr G_{n,L}\), let us define
\begin{equation}\label{EqDirectPotentialDifference}
 \Phi_{L,\bxi}(\betaeta)
 \defeq
 b\left[
  W_n^{\per}(\mathsf T_{n,L}\betaeta)
  -W_{I_L}^{\intE}(\betaeta)
 \right]-\beta\widehat A_{\bxi}(\betaeta),
\end{equation}
Extend this function by zero outside \(\mathscr G_{n,L}\).  Thus the
definition involves only finite quantities and no difference of extended-real
energies.  Equation~\eqref{EqConditionalGibbs}, together with
\eqref{EqMasterFibreIdentity}, gives
\[
 \frac{\dd K_{L,\bxi}}{\dd\Bin_{n,I_L}}(\betaeta)
 \propto
 \exp\left[
  -bW_{I_L}^{\intE}(\betaeta)-\beta\widehat A_{\bxi}(\betaeta)
 \right].
\]
Similarly, Definition~\ref{DefCircularEnsemble} and
\eqref{EqPeriodicEnergyIdentity} give
\[
 \frac{\dd\bR_{n,L}}{\dd\Bin_{n,I_L}}(\betaeta)
 \propto
 \exp\left[-bW_n^{\per}(\mathsf T_{n,L}\betaeta)\right].
\]
Consequently,
\begin{equation}\label{EqDirectDensityRatio}
 \frac{\dd K_{L,\bxi}}{\dd\bR_{n,L}}
 =
 \frac{\exp(\Phi_{L,\bxi})}
 {\bR_{n,L}[\exp(\Phi_{L,\bxi})]}.
\end{equation}
Thus \(\Phi_{L,\bxi}\) is the unnormalised logarithmic density ratio of
\(K_{L,\bxi}\) relative to \(\bR_{n,L}\).  This representation allows
Jensen's inequality to bound the relative entropy by a difference of
expectations, in which every fibrewise constant cancels.
Also observe that, the denominator is finite and strictly positive because it is the ratio of
the two finite Gibbs normalising constants.

The functions \(\widehat A_{\bxi}\) and
\(\mathcal A_n\circ\mathsf T_{n,L}\) are bounded on the fixed fibre.
Lemma~\ref{LemPeriodicIntrinsic} gives
\[
 \bR_{n,L}
 \bigl(|\mathsf{Err}_n^{\per}\circ\mathsf T_{n,L}|\bigr)
 =
 \E_{\bQ_{n,\beta}}[|\mathsf{Err}_n^{\per}|]<\infty.
\]
Moreover, Lemma~\ref{LemActualPeriodicError} and disintegration imply, after
removing an exterior null set, that
\[
 K_{L,\bxi}
 \bigl(|\mathsf{Err}_n^{\per}\circ\mathsf T_{n,L}|\bigr)<\infty.
\]
Equations~\eqref{EqAffineIdentity} and
\eqref{EqPeriodicIntrinsicDifference} give, on \(\mathscr G_{n,L}\),
\begin{equation}\label{EqDirectAffineExpansion}
 \begin{aligned}
 W_n^{\per}(\mathsf T_{n,L}\betaeta)-W_{I_L}^{\intE}(\betaeta)
 &=-\mathsf{Err}_n^{\per}(\mathsf T_{n,L}\betaeta)
   -\frac{2D}{n}\mathcal A_n(\mathsf T_{n,L}\betaeta)+c_{n,L},\\
 c_{n,L}&=D^2\log(L)-\frac32D^2-n\log\left(\frac Ln\right).
 \end{aligned}
\end{equation}
It follows that
\(\Phi_{L,\bxi}\in\mathrm L^1(K_{L,\bxi})
\cap\mathrm L^1(\bR_{n,L})\).  Equation~\eqref{EqDirectDensityRatio} and
Jensen's inequality therefore give
\begin{align}
 \Ent(K_{L,\bxi}|\bR_{n,L})
 &=
 K_{L,\bxi}(\Phi_{L,\bxi})
 -\log\bR_{n,L}[\exp(\Phi_{L,\bxi})]\notag\\
 &\leq
 K_{L,\bxi}(\Phi_{L,\bxi})
 -\bR_{n,L}(\Phi_{L,\bxi}).
\label{EqDirectPairwiseJensen}
\end{align}

The right-hand side of \eqref{EqDirectPairwiseJensen} is the integral of
\(\Phi_{L,\bxi}(\bX)-\Phi_{L,\bxi}(\bY)\) against
\(K_{L,\bxi}(\dd\bX)\bR_{n,L}(\dd\bY)\).  The constant \(c_{n,L}\)
cancels from this difference.  Furthermore, the
first intensity measure of \(\bQ_{n,\beta}\) is Lebesgue measure by
Lemma~\ref{LemPeriodicIntrinsic}; hence
\[
 \bR_{n,L}
 \bigl(\mathcal A_n\circ\mathsf T_{n,L}\bigr)
 =
 \E_{\bQ_{n,\beta}}[\mathcal A_n]
 =
 \int_{I_n}U_n(u)
 \dd\bigl(\lambda_n-\mathrm{Leb}_{I_n}\bigr)(u)=0.
\]
Also,
\[
 \bR_{n,L}
 \bigl(\mathsf{Err}_n^{\per}\circ\mathsf T_{n,L}\bigr)
 =
 \E_{\bQ_{n,\beta}}[\mathsf{Err}_n^{\per}],
\]
while, before any averaging over the exterior,
\[
 \widehat A_{\bxi}(\bY)-\widehat A_{\bxi}(\bX)
 =\sB_{I_L}(\bY,\bX,\bxi).
\]
Substitution proves \eqref{EqDirectFibreComparison}.
\end{proof}

The kernels \(\bxi\mapsto K_{L,\bxi}\) and
\(\bxi\mapsto\bR_{n_L(\bxi),L}\) are Borel.  The standard representation of
relative entropy as the supremum over a countable sequence of finite
measurable partitions therefore shows that
\[
 \bxi\longmapsto
 \Ent(K_{L,\bxi}|\bR_{n_L(\bxi),L})
\]
is Borel.  For \(\balpha_L\)-almost every \(\bxi\) with
\(n_L(\bxi)\geq1\), the fibrewise dilation is a bimeasurable bijection
and therefore leaves relative entropy unchanged.  When
\(n_L(\bxi)=0\), both laws are \(\delta_\varnothing\).  Hence
\begin{align}
 \mathcal H_L^{\mathrm{circ}}
 &\defeq
 \int_{\Conf(I_L^c)}
 \Ent(K_{L,\bxi}|\bR_{n_L(\bxi),L})
 \balpha_L(\dd\bxi)=
 \int_{\Conf(I_L^c)}
 \Ent(\nu_{L,\bxi}|\bQ_{n_L(\bxi),\beta})
 \balpha_L(\dd\bxi).
\label{EqCircularEntropyFixedSpace}
\end{align}
Finally, put
\[
 p_{L,n}\defeq\bP(\sN_L=n),\qquad n\in\mathbb Z_+.
\]

We can now prove our main estimate. 

\begin{prop}\label{PropCircularEntropy}
We have
\begin{equation}\label{EqCircularEntropy}
 \mathcal H_L^{\mathrm{circ}}=o(L).
\end{equation}
\end{prop}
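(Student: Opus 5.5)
The plan is to integrate the fibrewise inequality of Lemma~\ref{LemDirectGibbsComparison} against \(\balpha_L(\dd\bxi)\), using the identity \eqref{EqCircularEntropyFixedSpace} to express \(\mathcal H_L^{\mathrm{circ}}\) as the exterior average of \(\Ent(K_{L,\bxi}|\bR_{n_L(\bxi),L})\). Since the entropy is non-negative, it suffices to bound each averaged term on the right-hand side of \eqref{EqDirectFibreComparison} in absolute value by \(o(L)\). The zero sector contributes nothing by the convention in that lemma. Measurability of every integrand in \(\bxi\) follows from the Borel kernels and the countable pasting over sectors recorded earlier.

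First consider the candidate-side terms. Disintegrating \(\bP\) along \(\bxi\mapsto K_{L,\bxi}\) gives
\[
\int K_{L,\bxi}\bigl(|\mathsf{Err}^{\per}_{n}\circ\mathsf T_{n,L}|\bigr)\balpha_L(\dd\bxi)=\E_{\bP}\bigl[|\mathsf{Err}^{\per}_{\sN_L}(\bZ_L)|\bigr].
\]
By Lemma~\ref{LemActualPeriodicError}, this is \(o(L)\). Similarly, since \(D/n=\sD_L/\sN_L\) is exterior-measurable by rigidity,
\[
\int\Bigl|\tfrac{2D}{n}K_{L,\bxi}(\mathcal A_n\circ\mathsf T_{n,L})\Bigr|\balpha_L(\dd\bxi)\le 2\,\E_{\bP}\Bigl[\Bigl|\tfrac{\sD_L}{\sN_L}\mathcal A_{\sN_L}(\bZ_L)\Bigr|\Bigr].
\]
By Lemma~\ref{LemAveragedDilationStatistic}, this is at most \(C\sqrt{Lv(L)}=o(L)\).

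Next consider the reference term \(\E_{\bQ_{n,\beta}}[\mathsf{Err}^{\per}_n]\). Averaging over exteriors, Lemma~\ref{LemPeriodicIntrinsic} gives
\[
\sum_n p_{L,n}\E_{\bQ_{n,\beta}}\bigl[|\mathsf{Err}^{\per}_n|\bigr]\le C_\beta\,\E_{\bP}\bigl[\log^2(2+\sN_L)\bigr].
\]
Concavity of \(\log^2(2+x)\) for large \(x\) (or simply \(\log^2(2+x)\le C(1+\sqrt{x})\) with Jensen) and \(\E_{\bP}[\sN_L]=L\) bound this by \(C(1+\sqrt L)=o(L)\).

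Finally, the boundary term. By construction of \(\bP_L^{\mathrm{rep}}\), its image under \((\bC,\bY_L)\mapsto(\bC_{I_L^c},\bX_L,\bY_L)\) is \(\balpha_L(\dd\bxi)K_{L,\bxi}(\dd\bX)\bR_{n_L(\bxi),L}(\dd\bY)\). Hence the averaged double integral of \(\sB_{I_L}(\bY,\bX,\bxi)\) equals \(\E_{\bP_L^{\mathrm{rep}}}[\sB_{I_L}(\bY_L,\bX_L,\bC_{I_L^c})]\). In absolute value this is \(o(L)\) by Proposition~\ref{PropBoundary}. Summing the four contributions, with weights \(b\), \(b\), \(2b\) and \(\beta\), gives \eqref{EqCircularEntropy}.

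The only delicate point is justifying the exchange of the exterior average with the fibrewise integrals. This requires the integrability of each term on a \(\balpha_L\)-full set, which Lemmas~\ref{LemActualPeriodicError} and~\ref{LemBoundaryRepresentation} supply in \(\mathrm L^1\), so Fubini applies. Since every estimate needed has already been established, I expect no real obstacle beyond this bookkeeping.
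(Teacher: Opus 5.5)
Your proposal is correct and is essentially the paper's proof: you integrate the fibrewise bound of Lemma~\ref{LemDirectGibbsComparison} against \(\balpha_L\), take absolute values term by term, and control the four contributions as follows:
\begin{enumerate}[label=\textup{(\roman*)}]
\item the candidate periodic error by Lemma~\ref{LemActualPeriodicError};
\item the reference periodic error by Lemma~\ref{LemPeriodicIntrinsic}, together with \(\log^2(2+n)\leq C(1+\sqrt n)\) and Jensen;
\item the dilation statistic by Lemma~\ref{LemAveragedDilationStatistic};
\item the boundary term by Proposition~\ref{PropBoundary}, via the push-forward identification of \(\bP_L^{\mathrm{rep}}\) with \(\balpha_L(\dd\bxi)K_{L,\bxi}(\dd\bX)\bR_{n_L(\bxi),L}(\dd\bY)\).
\end{enumerate}
The only quibble is that \(\log^2(2+x)\) is concave only for \(x>\mathrm e-2\), so the \(C(1+\sqrt x)\) route you give in parentheses is the one to use.
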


\begin{proof}
We use throughout the conventions
\[
 \bQ_{0,\beta}=\bR_{0,L}=\delta_\varnothing,
 \qquad
 \mathsf{Err}_0^{\per}(\varnothing)
 =\mathcal A_0(\varnothing)=0,
\]
and assign the value zero on \(\{n=0\}\) to every expression involving
\(\mathsf T_{n,L}\) or \(D/n\).  Thus the zero-particle sector makes no
contribution to any of the terms below.

We first control the periodic error under the reference laws. Observe that,
Lemma~\ref{LemPeriodicIntrinsic} and the elementary inequality
\[
 \log^2(2+n)\leq C(1+\sqrt n)
\]
give the following,
\[
\begin{aligned}
 \sum_{n\geq0}p_{L,n}
 \E_{\bQ_{n,\beta}}
 \bigl[|\mathsf{Err}_n^{\per}|\bigr]
 \leq
 C_\beta\sum_{n\geq0}p_{L,n}(1+\sqrt n)=C_\beta
 \left(1+\E_{\bP}[\sqrt{\sN_L}]\right)&\leq
 C_\beta
 \left(1+\sqrt{\E_{\bP}[\sN_L]}\right)\\
 &=C_\beta(1+\sqrt L)
 =o(L).
\end{aligned}
\]
Here Jensen's inequality was used in the second inequality, while
Proposition~\ref{PropInputs}\textup{(b)} gives
\(\E_{\bP}[\sN_L]=L\).  In particular, the reference periodic-error
mixture is finite.

We next make explicit how the remaining fibrewise quantities are
averaged.  Introduce the joint probability measure
\[
 \mathbb M_L(\dd\bxi,\dd\bX,\dd\bY)
 \defeq
 \balpha_L(\dd\bxi)
 K_{L,\bxi}(\dd\bX)
 \bR_{n_L(\bxi),L}(\dd\bY).
\]
By disintegration of \(\bP\), number rigidity, and
\eqref{EqReplacementJointLaw}, \(\mathbb M_L\) is the push-forward of
\(\bP_L^{\mathrm{rep}}\) under
\[
 (\bC,\bY_L)
 \longmapsto
 (\bC_{I_L^c},\bC_{I_L},\bY_L).
\]
Consequently, under this identification,
\[
 n_L(\bxi)=\bX(I_L)=\bY(I_L)=\sN_L,
 \qquad
 n_L(\bxi)-L=\sD_L,
\]
and, on \(\{\sN_L\geq1\}\),
\[
 \mathsf T_{\sN_L,L}\bX
 =(T_{\sN_L/L})_*\bC_{I_L}
 =\bZ_L.
\]
Rigidity also gives
\[
 \balpha_L\{\bxi:n_L(\bxi)=n\}
 =\bP(\sN_L=n)=p_{L,n}.
\]

For \(\balpha_L\)-almost every \(\bxi\), put
\[
 n=n_L(\bxi),\qquad D=n-L.
\]
Taking absolute values term by term in
\eqref{EqDirectFibreComparison},
gives
\[
\begin{aligned}
 \Ent(K_{L,\bxi}|\bR_{n,L})
 &\leq{}
 bK_{L,\bxi}\left(
   |\mathsf{Err}_n^{\per}\circ\mathsf T_{n,L}|
 \right)
 +b\E_{\bQ_{n,\beta}}
   \bigl[|\mathsf{Err}_n^{\per}|\bigr]+
 2bK_{L,\bxi}\left(
   \left|
   \frac{D}{n}
   \mathcal A_n\circ\mathsf T_{n,L}
   \right|
 \right)\\
 &+
 \beta
 \int_{\Conf(I_L)}K_{L,\bxi}(\dd\bX)
 \int_{\Conf(I_L)}\bR_{n,L}(\dd\bY)\,
 |\sB_{I_L}(\bY,\bX,\bxi)|.
\end{aligned}
\]
We now identify the average of each term on the right-hand side. First, disintegration and the definition of \(\bZ_L\) give
\[
\begin{aligned}
 &\int_{\Conf(I_L^c)}
 K_{L,\bxi}\left(
 |\mathsf{Err}_{n_L(\bxi)}^{\per}
       \circ\mathsf T_{n_L(\bxi),L}|
 \right)\balpha_L(\dd\bxi)=
 \E_{\bP}\left[
 |\mathsf{Err}_{\sN_L}^{\per}(\bZ_L)|
 \right]
 =o(L),
\end{aligned}
\]
where the final estimate is
Lemma~\ref{LemActualPeriodicError}. Second, the law of \(n_L(\bC_{I_L^c})\) is the law of \(\sN_L\), and
therefore
\[
\begin{aligned}
 &\int_{\Conf(I_L^c)}
 \E_{\bQ_{n_L(\bxi),\beta}}
 \bigl[|\mathsf{Err}_{n_L(\bxi)}^{\per}|\bigr]
 \balpha_L(\dd\bxi)=
 \sum_{n\geq0}p_{L,n}
 \E_{\bQ_{n,\beta}}
 \bigl[|\mathsf{Err}_n^{\per}|\bigr]
 =o(L).
\end{aligned}
\]
Third, another application of the disintegration identity gives
\[
\begin{aligned}
 &\int_{\Conf(I_L^c)}
 K_{L,\bxi}\left(
 \left|
 \frac{n_L(\bxi)-L}{n_L(\bxi)}
 \mathcal A_{n_L(\bxi)}
 \circ\mathsf T_{n_L(\bxi),L}
 \right|
 \right)\balpha_L(\dd\bxi)=
 \E_{\bP}\left[
 \left|
 \frac{\sD_L}{\sN_L}
 \mathcal A_{\sN_L}(\bZ_L)
 \right|
 \right]\leq C\sqrt{Lv(L)}
 =o(L)
\end{aligned}
\]
by Lemma~\ref{LemAveragedDilationStatistic}.  As above, the quotient is
defined to be zero on \(\{\sN_L=0\}\). Finally, the push-forward description of \(\mathbb M_L\) yields
\[
\begin{aligned}
 &\int_{\Conf(I_L^c)}
 \int_{\Conf(I_L)}K_{L,\bxi}(\dd\bX)
 \int_{\Conf(I_L)}\bR_{n_L(\bxi),L}(\dd\bY)\,
 |\sB_{I_L}(\bY,\bX,\bxi)|
 \balpha_L(\dd\bxi)\\
 &\qquad=
 \E_{\bP_L^{\mathrm{rep}}}\left[
 \left|
 \sB_{I_L}(\bY_L,\bX_L,\bC_{I_L^c})
 \right|
 \right]
 =o(L)
\end{aligned}
\]
by Proposition~\ref{PropBoundary}.

Putting everything together using
\eqref{EqCircularEntropyFixedSpace}, we obtain
\[
\begin{aligned}
 0\leq\mathcal H_L^{\mathrm{circ}}
 &\leq{}
 b\E_{\bP}\left[
 |\mathsf{Err}_{\sN_L}^{\per}(\bZ_L)|
 \right]+
 b\sum_{n\geq0}p_{L,n}
 \E_{\bQ_{n,\beta}}
 \bigl[|\mathsf{Err}_n^{\per}|\bigr]+
 2b\E_{\bP}\left[
 \left|
 \frac{\sD_L}{\sN_L}
 \mathcal A_{\sN_L}(\bZ_L)
 \right|
 \right]\\
 &+
 \beta\E_{\bP_L^{\mathrm{rep}}}\left[
 \left|
 \sB_{I_L}(\bY_L,\bX_L,\bC_{I_L^c})
 \right|
 \right].
\end{aligned}
\]
and since as shown above each of the four terms on the right-hand side are \(o(L)\), we get
\[
 \mathcal H_L^{\mathrm{circ}}=o(L),
\]
which proves \eqref{EqCircularEntropy}.
\end{proof}

\section{Cyclic-gap transport and Palm identification}
\label{SectionPalm}

Proposition~\ref{PropCircularEntropy}, equivalently
\eqref{EqCircularEntropy}, gives \(\mathcal H_L^{\mathrm{circ}}=o(L)\).
We now show how this estimate can be leveraged to compare the limiting Palm measures of the conditional distribution of a DLR solution $\bP$ to the limiting Palm measure of the circular $\beta$ ensemble. We need some preliminary constructions.

\subsection{Fixed-sector averaging}

Recall that \(p_{L,n}=\bP(\sN_L=n)\).

\begin{defn}
For \(p_{L,n}>0\), define
\begin{equation}\label{EqFixedSectorAverage}
 \overline\nu_{L,n}
 \defeq
 \frac{1}{p_{L,n}}
 \int_{\{n_L(\bxi)=n\}}
 \nu_{L,\bxi}\,\balpha_L(\dd\bxi).
\end{equation}
When \(p_{L,n}=0\), set
\(\overline\nu_{L,n}=\bQ_{n,\beta}\) (the actual choice on such a sector is
irrelevant).  
\end{defn}
Convexity of relative entropy gives
\begin{equation}\label{EqFixedSectorEntropy}
 \sum_{n\geq0}p_{L,n}
 \Ent(\overline\nu_{L,n}|\bQ_{n,\beta})
 \leq \mathcal H_L^{\mathrm{circ}}=o(L).
\end{equation}
Indeed, apply convexity on each set \(\{n_L(\bxi)=n\}\) and then sum over
\(n\). 

For \(p_{L,n}>0\), observe that we have,
\[
 \overline\nu_{L,n}
 =\operatorname{Law}_{\bP}(\bZ_L\mid\sN_L=n).
\]

For \(n\geq1\), let
\[
 \mathcal S_n^\circ
 \defeq
 \left\{(x_1,\ldots,x_n)\in(0,\infty)^n:
 \sum_{i=1}^nx_i=n\right\}.
\]
If
\(\bzeta=\sum_{i=1}^n\delta_{u_i}\), with
\(0<u_1<\cdots<u_n<n\), extend the indices by
\(u_{i+n}=u_i+n\) and define the cyclic gap vector rooted at \(u_j\) by
\[
 \mathsf{Gap}_{n,j}(\bzeta)
 =
 (u_{j+1}-u_j,\ldots,u_{j+n}-u_{j+n-1})
 \in\mathcal S_n^\circ.
\]
The uniformly rooted gap kernel is
\[
\mathsf{Gap}_n(\bzeta)
 \defeq
 \frac1n\sum_{j=1}^n
 \delta_{\mathsf{Gap}_{n,j}(\bzeta)}.
\]
On the complement of the Borel set of configurations admitting the unique
ordering \(0<u_1<\cdots<u_n<n\), define
\(\mathsf{Gap}_n(\bzeta)=\delta_{(1,\ldots,1)}\).  The ordering map on the
simple sector is Borel, so this makes \(\mathsf{Gap}_n\) a globally defined
Borel probability kernel.  If \(p_{L,n}>0\), the conditional Gibbs formula
gives \(\overline\nu_{L,n}\ll\Bin_{n,I_n}\), while
\(\bQ_{n,\beta}\) is equivalent to that binomial law on the collision-free
sector.  Thus
\[
 \overline\nu_{L,n}\ll\Bin_{n,I_n}\sim\bQ_{n,\beta}.
\]
For
\(p_{L,n}=0\), it follows from the convention
\(\overline\nu_{L,n}=\bQ_{n,\beta}\).
\begin{defn}
We define,
\begin{equation}\label{EqGapLaws}
 \rho_{L,n}
 \defeq
 \int_{\Conf_n(I_n)}
 \mathsf{Gap}_n(\bzeta)\,
 \overline\nu_{L,n}(\dd\bzeta),
 \qquad
 \mu_n
 \defeq
 \int_{\Conf_n(I_n)}
 \mathsf{Gap}_n(\bzeta)\,
 \bQ_{n,\beta}(\dd\bzeta).
\end{equation}
\end{defn}
The measure \(\overline\nu_{L,n}\) averages the exterior-conditioned
candidate laws at fixed particle number, so that convexity converts the
averaged fibre entropy into a deterministic sectorwise bound.  The laws
\(\rho_{L,n}\) and \(\mu_n\) then retain only a uniformly particle-rooted
cyclic gap vector under the candidate and circular sectors,
respectively, removing absolute position and allowing cyclic symmetry
to distribute the entropy cost among the \(n\) gaps.

Applying the same probability kernel to the preceding absolute-continuity
relation gives \(\rho_{L,n}\ll\mu_n\).
Both laws are invariant under the cyclic shift
\[
 \sigma_n(x_1,\ldots,x_n)=(x_2,\ldots,x_n,x_1).
\]
Using the fact that relative entropy cannot increase when the same probability kernel is applied to both laws, used first under the uniform-root kernel and then under the
gap map, gives the entropy inequality
\begin{equation}\label{EqGapEntropyContraction}
 \Ent(\rho_{L,n}|\mu_n)
 \leq
 \Ent(\overline\nu_{L,n}|\bQ_{n,\beta}).
\end{equation}

\subsection{Transport on the cyclic-gap simplex}

Let us define,
\[
 \psi(q)\defeq q-1-\log(q),\qquad q>0.
\]

The optimal-transport theorem below is a so-called above-tangent inequality.  For a
Gibbs law \(\mu(\dd x)\propto\mathrm e^{-V(x)}\dd x\), it constructs, for
each \(\rho\ll\mu\), a coupling \((\mathsf X,\mathsf Y)\) of \(\mu\) and
\(\rho\) such that
\[
 \E\!\left[
 V(\mathsf Y)-V(\mathsf X)
 -\nabla V(\mathsf X)\mathbin{\cdot}
  (\mathsf Y-\mathsf X)
 \right]
 \leq \Ent(\rho\mid\mu).
\]
The integrand is the non-negative amount by which \(V(\mathsf Y)\) lies
above the tangent plane to the convex function \(V\) at \(\mathsf X\).
For the circular gap potential below, this remainder controls
\(\beta\sum_i\psi(\mathsf Y_i/\mathsf X_i)\). In the following lemma we first write $\mu_n$ as a Gibbs measure with potential $V_n$ from \eqref{EqCircularGapPotential} and then apply standard transport results from the literature \cite{BLTransport,CETransport}. The only issue is $V_n$ is rather singular and we need an approximation scheme in order to apply these results.

\begin{lem}\label{LemCyclicGapTransport}
For every \(n\geq3\) and every probability measure
\(\rho\) on \(\mathcal S_n^\circ\) with
\(\Ent(\rho|\mu_n)<\infty\), there is a coupling
 \(\pi\) of \(\mu_n\) and \(\rho\).  Denote its coordinate random
 elements by \((\mathsf X,\mathsf Y)\).  Then
\begin{equation}\label{EqCyclicGapTransport}
 \beta\,\E_\pi\left[
 \sum_{i=1}^n
 \psi\left(\frac{\mathsf Y_i}{\mathsf X_i}\right)\right]
 \leq\Ent(\rho|\mu_n).
\end{equation}
If \(\rho\) is invariant under \(\sigma_n\), the coupling can be chosen
invariant under simultaneous cyclic shifts.  In that case, for every
\(1\leq i\leq n\),
\begin{equation}\label{EqOneGapTransport}
 \E_\pi\left[
 \psi\left(\frac{\mathsf Y_i}{\mathsf X_i}\right)\right]
 \leq\frac{\Ent(\rho|\mu_n)}{\beta n}.
\end{equation}
\end{lem}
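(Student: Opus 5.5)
The plan is to write $\mu_n$ explicitly as a Gibbs law on the simplex $\mathcal S_n^\circ$ with a convex potential, apply the above-tangent transport inequality of \cite{CETransport,BLTransport}, and bound the Bregman remainder from below by $\beta\sum_i\psi(y_i/x_i)$.

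\textbf{Step 1: Gibbs form of $\mu_n$.} Write $s_k=x_1+\cdots+x_k$ for the partial sums of a gap vector $x\in\mathcal S_n^\circ$. Under $\bQ_{n,\beta}$, uniformly rooted, the gap vector has a density with respect to Lebesgue measure on the $(n-1)$-dimensional affine simplex. Integrating out the root position $u_j$ uses rotation invariance, and the factor $n$ from the choice of root cancels with the $1/n$ in the averaging. The resulting density is proportional to $\mathrm e^{-V_n}$, where
\begin{equation}\label{EqCircularGapPotential}
 V_n(x)=-\beta\sum_{0\leq k<l\leq n-1}\log\left|2\sin\left(\frac{\pi(s_l-s_k)}{n}\right)\right|,\qquad s_0=0.
\end{equation}
Each difference $s_l-s_k\in(0,n)$ is a sum of consecutive gaps, hence linear in $x$. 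The map $t\mapsto-\log\sin(\pi t/n)$ is convex on $(0,n)$, so $V_n$ is convex on $\mathcal S_n^\circ$.

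\textbf{Step 2: the Bregman lower bound.} Split off the nearest-neighbour pairs $l=k+1$, together with the wrap-around pair, which equals the single gap $x_n$ because $n-s_{n-1}=x_n$. Write
\[
 -\log\sin(\pi t/n)=-\log t+\phi_n(t),
\]
where $\phi_n$ is convex on $(0,n)$, since $-\log(\sin\theta/\theta)$ is convex. The Bregman divergence of a sum of convex functions of linear forms is the sum of the individual Bregman divergences, and each is nonnegative. Dropping all terms except the $-\beta\log x_i$ pieces leaves
\[
 D_{V_n}(y,x)\geq\beta\sum_{i=1}^n\bigl[-\log y_i+\log x_i+(y_i-x_i)/x_i\bigr]=\beta\sum_i\psi(y_i/x_i).
\]
The pair $(k,l)=(0,n-1)$ corresponds to $s_{n-1}=n-x_n$, and $\sin$ is symmetric under $t\mapsto n-t$, so it gives $-\log x_n$ plus a convex remainder. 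The hypothesis $n\geq3$ ensures that these $n$ nearest-neighbour pairs are distinct.

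\textbf{Step 3: the transport inequality, which is the main obstacle.} For $\mu=\mathrm e^{-V}$ with $V$ convex and smooth on an open convex set (here relatively open in an affine subspace), Cordero-Erausquin's argument applies to the Brenier map $T$ pushing $\mu$ to $\rho$. Taking $\mathsf Y=T(\mathsf X)$, it gives $\E[D_V(\mathsf Y,\mathsf X)]\leq\Ent(\rho|\mu)$. The argument combines the Monge--Ampère equation with $\log\det\nabla T\leq\Delta_{\mathrm A}(\varphi)-n+1$ and an integration by parts whose boundary terms must be controlled.

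The difficulty is that $V_n$ blows up at the boundary of the simplex. I would handle this by approximation. First regularise $V_n$ by replacing $-\log$ with convex functions $g_\eps$ that are bounded near $0$ and increase to $-\log$. Then truncate $\rho$ to compactly supported, bounded-density approximants $\rho_k$ with $\Ent(\rho_k|\mu_n)\to\Ent(\rho|\mu_n)$. Apply the smooth result to get couplings $\pi_{\eps,k}$, extract a weak limit point using tightness from fixed marginals, and pass to the limit. Lower semicontinuity of $\psi(y/x)$, which is continuous and nonnegative, handles the left side via Fatou and the portmanteau theorem. The $\mathrm e^{-V_\eps}$-side entropies converge by monotone convergence.

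\textbf{Step 4: cyclic invariance.} Uniqueness of the Brenier map, together with $\sigma_n$-invariance of $\mu_n$, of $\rho$, and of the quadratic cost, makes $T$ commute with $\sigma_n$. The coupling $(\mathsf X,T\mathsf X)$ is then invariant under simultaneous shifts. Alternatively, average any coupling over the cyclic group; this preserves both the marginals and the bound. Under invariance each summand in \eqref{EqCyclicGapTransport} has the same expectation, and dividing by $n$ gives \eqref{EqOneGapTransport}.
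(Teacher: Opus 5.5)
Your Steps 1, 2 and 4 are essentially the paper's proof. The paper also writes $\mu_n\propto\mathrm e^{-V_n}$ with $V_n=\beta\mathsf H_n^{\per}(\bzeta_{t,x})$, with the root integrated out at constant Jacobian. It gets convexity and the scalar bound $\mathsf g_n(y)-\mathsf g_n(x)-\mathsf g_n'(x)(y-x)\geq\psi(y/x)$ from $\mathsf g_n''(r)\geq r^{-2}$, which is equivalent to your convexity of $\phi_n$. It extracts $\beta\sum_i\psi(y_i/x_i)$ from the nearest-neighbour pair of each gap, these pairs being distinct since $n\geq3$. It obtains cyclic invariance by averaging the coupling.

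The real difference is Step 3. The paper never works on a domain with boundary. It takes Moreau envelopes $V_n^{(m)}$ of the $+\infty$-extension of $V_n$ to the whole hyperplane $\{\sum_i x_i=n\}$. These are finite, $C^1$, convex and quadratically coercive, so the finite-potential Cordero-Erausquin theorem applies verbatim. The paper does not truncate $\rho$; instead it proves $V_n\in\mathrm L^1(\rho)$ by testing the entropy inequality on $a(V_n-M)$ with $a<1$. It then passes to the limit using local uniform convergence of $V_n^{(m)}$ and $\nabla V_n^{(m)}$ on the open simplex, compactly supported cutoffs, and monotone convergence. This avoids boundary terms and Brenier regularity on domains, and it works for all $\beta>0$. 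Your route, with potentials finite on the closed simplex, truncated $\rho_k$ and an explicit Brenier map, can be made to work, but two points need repair.

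\textbf{Boundary terms.} Your $V_\eps$ is finite up to $\partial\mathcal S_n^\circ$, so $\mathrm e^{-V_\eps}$ jumps there. The integration by parts therefore leaves $\int_{\partial\mathcal S_n^\circ}\mathrm e^{-V_\eps}(T(x)-x)\cdot\mathsf n(x)\,\sigma(\dd x)$. This term is controlled not by the approximation but by convexity of the simplex: $T$ takes values in its closure, so $(T(x)-x)\cdot\mathsf n(x)\leq0$. You thus need the convex-domain form of the inequality. Also, regularising only the $-\log t$ part leaves the blow-up of $\phi_n(t)$ as $t\uparrow n$. Applying a convex non-increasing $g_\eps$ to $2\sin(\pi t/n)$ removes both singularities and keeps convexity.

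\textbf{The limit on the left-hand side.} The $\psi$-bound cannot survive regularisation, because a convex function bounded near $0$ cannot have second derivative $\geq u^{-2}$. So at level $(\eps,k)$ you only have $\E_{\pi_{\eps,k}}[D_{V_\eps}(\mathsf Y,\mathsf X)]\leq\Ent(\rho_k|\mu_\eps)$. Semicontinuity of $\psi$ cannot be applied to $\pi_{\eps,k}$. You must pass to the limit in the Bregman costs instead:
\begin{itemize}
\item the limiting coupling lives on $(\mathcal S_n^\circ)^2$;
\item for $g_\eps=-\log$ on $[\eps,\infty)$, $D_{V_\eps}=D_{V_n}$ on each compact subset of $(\mathcal S_n^\circ)^2$ once $\eps$ is small;
\item hence $\int\chi D_{V_n}\,\dd\pi\leq\Ent(\rho|\mu_n)$ for every $0\leq\chi\leq1$ in $C_c((\mathcal S_n^\circ)^2)$;
\item monotone convergence then gives the Bregman inequality for $V_n$, and only now does your Step 2 apply.
\end{itemize}
This is exactly the paper's cutoff argument.
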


\begin{proof}
Recall the circular kernel \(\mathsf{g}_n\) from
Section~\ref{SectionCircularRecovery}.  With cyclic indices, define on
\(\mathcal S_n^\circ\)
\begin{equation}\label{EqCircularGapPotential}
 V_n(x)
 \defeq
 \frac{\beta}{2}
 \sum_{i=1}^n\sum_{k=1}^{n-1}
 \mathsf{g}_n(x_i+\cdots+x_{i+k-1}).
\end{equation}

We first identify the law \(\mu_n\) explicitly.  For
\(x=(x_1,\ldots,x_n)\in\mathcal S_n^\circ\), put
\[
 s_0(x)=0,
 \qquad
 s_k(x)=\sum_{\ell=1}^k x_\ell,
 \quad 1\leq k\leq n-1,
\]
and, for \(t\in[0,n)\), define
\[
 \bzeta_{t,x}
 \defeq
 \sum_{k=0}^{n-1}\delta_{(t+s_k(x))\bmod n}.
\]
Thus \(t\) is the position of the marked particle and \(x\) is the vector
of clockwise gaps starting from that particle.

For cyclic indices, set
\[
 r_{i,k}(x)
 \defeq x_i+\cdots+x_{i+k-1},
 \qquad
 1\leq i\leq n,\quad 1\leq k\leq n-1.
\]
The quantity \(r_{i,k}(x)\) is the clockwise distance from the \(i\)-th
particle to the \(k\)-th subsequent particle.  Its reverse-oriented distance
is
\[
 r_{i+k,n-k}(x)=n-r_{i,k}(x).
\]
Since
\[
 \mathsf{g}_n(n-r)=\mathsf{g}_n(r),\qquad 0<r<n,
\]
the two orientations contribute the same interaction.  Every unordered pair
of particles occurs exactly twice in the double sum, once in each
orientation.  Consequently,
\[
 \frac12
 \sum_{i=1}^n\sum_{k=1}^{n-1}\mathsf{g}_n(r_{i,k}(x))
 =
 \sum_{0\leq i<j\leq n-1}
 \mathsf{g}_n\bigl(s_j(x)-s_i(x)\bigr)
 =
 \mathsf H_n^{\per}(\bzeta_{t,x}),
\]
and hence
\[
 V_n(x)=\beta\mathsf H_n^{\per}(\bzeta_{t,x}).
\]
In particular, the right-hand side is independent of the absolute rotation
\(t\).

Let
\[
 A_n\defeq
 \left\{x\in\R^n:\sum_{i=1}^n x_i=n\right\},
\]
and let \(\mathcal L_{A_n}\) denote its induced
\((n-1)\)-dimensional Lebesgue measure.  Mark one of the \(n\) particles
uniformly and list the particles clockwise from the marked particle.  Apart
from the null sets corresponding to collisions or to a particle at the fixed
cut, this gives the coordinates
\[
 (t,x_1,\ldots,x_{n-1}),
 \qquad
 x_n=n-\sum_{i=1}^{n-1}x_i.
\]
Before reduction modulo \(n\), the corresponding lifted particle locations
are
\[
 t,\quad
 t+x_1,\quad
 t+x_1+x_2,\quad\ldots,\quad
 t+x_1+\cdots+x_{n-1}.
\]
On each of the finitely many regions determined by which of these locations
cross the cut at \(n\), the derivative of this change of variables is
triangular and has constant non-zero determinant.  Passing from
\(\dd x_1\cdots\dd x_{n-1}\) to \(\mathcal L_{A_n}(\dd x)\) changes this
determinant only by another constant.  In particular, no factor depending on
\(x\) appears.

Therefore, for every bounded Borel \(F\geq0\),
\[
 \begin{aligned}
 \int_{\mathcal S_n^\circ}F(x)\mu_n(\dd x)
 &=
 \frac1n
 \int_{\Conf_n(I_n)}
 \sum_{j=1}^n
 F\bigl(\mathsf{Gap}_{n,j}(\bzeta)\bigr)
 \bQ_{n,\beta}(\dd\bzeta)\\
 &=
 C_{n,\beta}
 \int_0^n\int_{\mathcal S_n^\circ}
 F(x)\exp\!\left[-\beta
 \mathsf H_n^{\per}(\bzeta_{t,x})\right]
 \mathcal L_{A_n}(\dd x)\dd t\\
 &=
 nC_{n,\beta}
 \int_{\mathcal S_n^\circ}
 F(x)\mathrm e^{-V_n(x)}
 \mathcal L_{A_n}(\dd x),
 \end{aligned}
\]
where \(C_{n,\beta}>0\) is independent of \(F\), \(t\), and \(x\).
Normalising at \(F=1\), we obtain
\begin{equation}\label{EqCircularGapDensity}
 Z_n^{\mathrm{gap}}
 \defeq
 \int_{\mathcal S_n^\circ}
 \mathrm e^{-V_n(x)}\mathcal L_{A_n}(\dd x),
 \qquad
 \mu_n(\dd x)
 =
 \frac{\1_{\mathcal S_n^\circ}(x)\mathrm e^{-V_n(x)}}
 {Z_n^{\mathrm{gap}}}\mathcal L_{A_n}(\dd x).
\end{equation}
Thus \(\mu_n\) is the Gibbs law with potential \(V_n\) on the cyclic-gap
simplex.  Uniform rooting introduces no additional factor depending on the gap lengths.
Since
\begin{equation}\label{EqCircularPairCurvature}
 \mathsf{g}_n''(r)
 =
 \left(\frac{\pi}{n}\right)^2
 \csc^2\left(\frac{\pi r}{n}\right)
 \geq\frac1{r^2},
 \qquad 0<r<n,
\end{equation}
the function \(V_n\) is convex.  Extend it by \(+\infty\) from
\(\mathcal S_n^\circ\) to \(A_n\).  The resulting function is proper,
lower semicontinuous, and convex.  Indeed, if \(x_i\to0\), the two
nearest-neighbour terms corresponding to that gap have total contribution
\[
 \beta \mathsf{g}_n(x_i)=-\beta\log(x_i)+O_{n,\beta}(1),
\]
whereas every other circular pair term is bounded below by
\(-\log(2)\).  Therefore
\[
 V_n(x)\longrightarrow+\infty
 \qquad\text{as }x\in\mathcal S_n^\circ
 \text{ approaches }\partial\mathcal S_n^\circ.
\]
This proves lower semicontinuity of the \(+\infty\)-extension and also shows
that
\[
 M\defeq\inf_{\mathcal S_n^\circ}V_n\in\R.
\]

We use the following finite-potential transport theorem
\cite[Proposition~1.1]{CETransport}; see also
\cite[Section~4]{BLTransport}.  Let \(E\) be a finite-dimensional Euclidean
space, let \(V:E\to\R\) be finite, differentiable, and convex, and put
\[
 \mu_V(\dd x)=Z_V^{-1}\mathrm e^{-V(x)}\dd x.
\]
Under the moment condition
\[
 \int_E
 \left(1+|x|^2+|\nabla V(x)|^2\right)
 \mathrm e^{-V(x)}\dd x<\infty,
\]
the theorem asserts that, for every probability measure
\(\nu\ll\mu_V\) with finite relative entropy,
\[
 \inf_{\pi\in\Pi(\mu_V,\nu)}
 \int_{E^2}
 \left[
 V(y)-V(x)-\nabla V(x)\mathbin{\cdot}(y-x)
 \right]\pi(\dd x,\dd y)
 \leq\Ent(\nu\mid\mu_V).
\]
Here \(\Pi(\mu_V,\nu)\) denotes the set of couplings of \(\mu_V\) and
\(\nu\).

This theorem does not apply directly to \(V_n\), because its
lower-semicontinuous extension equals \(+\infty\) outside
\(\mathcal S_n^\circ\) and diverges at its boundary.  We therefore
approximate \(V_n\) by finite differentiable convex potentials, apply the
theorem to those approximations, and then pass to the limit.

Identify \(A_n\) isometrically with \(\R^{n-1}\), and write \(\dd z\) for
the resulting Lebesgue measure.  Below, \(\nabla_{A_n}\) denotes the gradient
in this affine space.  Put
\[
 \Omega=\mathcal S_n^\circ,
\]
and let \(\overline V_n\) equal \(V_n\) on \(\Omega\) and \(+\infty\) on
\(A_n\setminus\Omega\).  For \(m\geq1\), define the Moreau envelope
\[
 V_n^{(m)}(z)
 \defeq
 \inf_{w\in A_n}
 \left\{
 \overline V_n(w)+\frac m2|z-w|^2
 \right\}.
\]
Define its normalising constant and Gibbs law by
\[
 Z_m\defeq\int_{A_n}\mathrm e^{-V_n^{(m)}(z)}\dd z,
 \qquad
 \mu_n^{(m)}(\dd z)
 \defeq
 \frac{\mathrm e^{-V_n^{(m)}(z)}}{Z_m}\dd z.
\]
Also put
\[
 Z\defeq\int_\Omega\mathrm e^{-V_n(x)}\dd x.
\]
By \eqref{EqCircularGapDensity}, \(Z\) is the normalising constant of
\(\mu_n\), up to the fixed constant arising from the chosen isometric
identification of \(A_n\).

Because \(\overline V_n\) is proper, lower semicontinuous, and convex with
bounded effective domain, \(V_n^{(m)}\) is finite, continuously
differentiable, and convex.  Moreover,
\[
 V_n^{(m)}(z)
 \geq
 M+\frac m2\operatorname{dist}(z,\Omega)^2,
\]
so \(V_n^{(m)}\) is quadratically coercive.  Standard properties of Moreau
envelopes give
\[
 V_n^{(m)}\uparrow\overline V_n,
 \qquad
 V_n^{(m)}\longrightarrow V_n,
 \qquad
 \nabla_{A_n}V_n^{(m)}
 \longrightarrow\nabla_{A_n}V_n
\]
locally uniformly on \(\Omega\).

For completeness, let \(p_m(z)\) be the proximal minimiser in the definition
of \(V_n^{(m)}\).  For every compact set \(K\Subset\Omega\),
\[
 \sup_{z\in K}|p_m(z)-z|^2
 \leq
 \frac2m\sup_{z\in K}\bigl[V_n(z)-M\bigr]
 \longrightarrow0.
\]
For sufficiently large \(m\), one has \(p_m(K)\Subset\Omega\), and the
first-order condition gives
\[
 \nabla_{A_n}V_n^{(m)}(z)
 =
 \nabla_{A_n}V_n(p_m(z)).
\]
Uniform continuity of \(\nabla_{A_n}V_n\) on a compact neighbourhood of
\(K\) proves the asserted local uniform convergence of the gradients.

Since
\[
 \mathrm e^{-V_n^{(m)}}\leq\mathrm e^{-V_n^{(1)}}
\]
and the right-hand side is integrable, dominated convergence gives
\begin{equation}\label{EqMoreauReferenceConvergence}
 Z_m\longrightarrow Z,
 \qquad
 \mu_n^{(m)}\longrightarrow\mu_n
 \quad\text{in total variation}.
\end{equation}

Put
\[
 H\defeq\Ent(\rho\mid\mu_n)<\infty.
\]
For every \(a\in(0,1)\),
\[
 \begin{aligned}
 \int_\Omega\mathrm e^{a(V_n(x)-M)}\mu_n(\dd x)
 &=
 \frac{\mathrm e^{-aM}}{Z}
 \int_\Omega\mathrm e^{-(1-a)V_n(x)}\dd x<\infty,
 \end{aligned}
\]
because \(\Omega\) is bounded and \(V_n\geq M\).  Apply the entropy
variational inequality to
\[
 a(V_n-M)\wedge R
\]
and then let \(R\to\infty\).  This gives
\[
 a\int_\Omega(V_n(x)-M)\rho(\dd x)
 \leq
 H+\log\left(
 \int_\Omega
 \mathrm e^{a(V_n(x)-M)}\mu_n(\dd x)
 \right)
 <\infty.
\]
Thus \(V_n\) is \(\rho\)-integrable.  Since, on \(\Omega\),
\[
 M\leq V_n^{(m)}\leq V_n,
\]
dominated convergence and \eqref{EqMoreauReferenceConvergence} imply
\begin{align}
 \Ent(\rho\mid\mu_n^{(m)})
 &=
 H+\log\left(\frac{Z_m}{Z}\right)
 +\int_\Omega
 \bigl(V_n^{(m)}(x)-V_n(x)\bigr)\rho(\dd x)
 \longrightarrow H.
\label{EqMoreauEntropyConvergence}
\end{align}
We now verify the moment hypothesis of this theorem for
\(V_n^{(m)}\).  The proximal representation gives
\[
 \nabla_{A_n}V_n^{(m)}(z)=m(z-p_m(z)).
\]
Since \(p_m(z)\in\Omega\), the set \(\Omega\) is bounded, and
\[
 V_n^{(m)}(z)
 \geq
 M+\frac m2\operatorname{dist}(z,\Omega)^2,
\]
we have
\[
 \int_{A_n}
 \left(
 1+|z|^2+
 |\nabla_{A_n}V_n^{(m)}(z)|^2
 \right)
 \mathrm e^{-V_n^{(m)}(z)}\dd z
 <\infty.
\]
Thus this theorem applies to \(V_n^{(m)}\).  Moreover,
\(\rho\ll\mu_n\) implies \(\rho\ll\mu_n^{(m)}\), and
\eqref{EqMoreauEntropyConvergence} shows that
\[
 \Ent(\rho\mid\mu_n^{(m)})<\infty
\]
for all sufficiently large \(m\).  We may therefore choose a coupling
\(\pi_m\in\Pi(\mu_n^{(m)},\rho)\) such that, with
\[
 c_m(x,y)
 \defeq
 V_n^{(m)}(y)-V_n^{(m)}(x)
 -\nabla_{A_n}V_n^{(m)}(x)\mathbin{\cdot}(y-x),
\]
one has
\begin{equation}\label{EqMoreauTransport}
 \int_{A_n^2}c_m(x,y)\pi_m(\dd x,\dd y)
 \leq
 \Ent(\rho\mid\mu_n^{(m)})+m^{-1}.
\end{equation}

The first marginals \(\mu_n^{(m)}\) converge to \(\mu_n\) in total
variation, and the second marginal is always \(\rho\).  Hence the sequence
\((\pi_m)\) is tight.  Passing to a subsequence, not relabelled,
\[
 \pi_m\overset{\mathrm w}{\longrightarrow}\pi
\]
for some coupling \(\pi\in\Pi(\mu_n,\rho)\).  Define, for
\(x,y\in\Omega\),
\[
 c(x,y)
 \defeq
 V_n(y)-V_n(x)
 -\nabla_{A_n}V_n(x)\mathbin{\cdot}(y-x).
\]
Choose functions
\[
 0\leq\chi_j\uparrow1,
 \qquad
 \chi_j\in C_c(\Omega^2),
\]
and extend them by zero to \(A_n^2\).  The local uniform convergence of the
potentials and their gradients gives, for each fixed \(j\),
\[
 \begin{aligned}
 \int_{\Omega^2}\chi_j(x,y)c(x,y)\pi(\dd x,\dd y)
 &=
 \lim_{m\to\infty}
 \int_{A_n^2}\chi_j(x,y)c_m(x,y)\pi_m(\dd x,\dd y)\leq H.
 \end{aligned}
\]
All the Bregman costs are non-negative by convexity.  Letting \(j\to\infty\)
and applying monotone convergence therefore yields
\begin{equation}\label{EqAboveTangentApplication}
 \E_\pi\left[
 V_n(\mathsf Y)-V_n(\mathsf X)
 -\nabla_{A_n}V_n(\mathsf X)
  \mathbin{\cdot}(\mathsf Y-\mathsf X)
 \right]
 \leq\Ent(\rho\mid\mu_n).
\end{equation}
Notice that this approximation avoids requiring the boundary integrability
of
\[
 |\nabla_{A_n}V_n|^2\mathrm e^{-V_n},
\]
and therefore works for every \(\beta>0\).

It remains to convert the Bregman cost of \(V_n\) into a cost on the
individual gaps.  For \(0<x,y<n\), integrating
\eqref{EqCircularPairCurvature} along the line segment from \(x\) to \(y\)
gives
\begin{equation}\label{EqScalarBregmanGap}
 \mathsf{g}_n(y)-\mathsf{g}_n(x)-\mathsf{g}_n'(x)(y-x)
 \geq
 \frac{y}{x}-1-\log\left(\frac{y}{x}\right)
 =
 \psi\left(\frac{y}{x}\right).
\end{equation}
For each \(i\), the terms labelled \((i,1)\) and \((i+1,n-1)\) in
\eqref{EqCircularGapPotential} correspond, respectively, to \(x_i\) and
\(n-x_i\).  Their total coefficient in \(V_n\) is \(\beta\).  For
\(n\geq3\), these are distinct terms.  Furthermore,
\[
 \mathsf{g}_n(n-r)=\mathsf{g}_n(r),
 \qquad
 \mathsf{g}_n'(n-r)=-\mathsf{g}_n'(r),
\]
so their Bregman divergences, viewed as functions of the gap coordinate
\(r\), agree.  Every remaining term in
\eqref{EqCircularGapPotential} has non-negative Bregman divergence by
convexity.  It follows that
\[
 \begin{aligned}
 &V_n(\mathsf Y)-V_n(\mathsf X)
 -\nabla_{A_n}V_n(\mathsf X)
  \mathbin{\cdot}(\mathsf Y-\mathsf X)\geq
 \beta\sum_{i=1}^n
 \psi\left(\frac{\mathsf Y_i}{\mathsf X_i}\right).
 \end{aligned}
\]
Combining this inequality with
\eqref{EqAboveTangentApplication} proves
\eqref{EqCyclicGapTransport}.

Finally, suppose that \(\rho\) is invariant under the cyclic shift
\(\sigma_n\).  The reference law \(\mu_n\) is also cyclically invariant.
Average \(\pi\) under the \(n\) transformations
\[
 (\mathsf X,\mathsf Y)
 \longmapsto
 (\sigma_n^j\mathsf X,\sigma_n^j\mathsf Y),
 \qquad 0\leq j\leq n-1.
\]
The resulting coupling has the same marginals, is invariant under
simultaneous cyclic shifts, and satisfies the same bound
\eqref{EqCyclicGapTransport}.  Its \(n\) coordinate expectations are equal,
so
\[
 \E_\pi\left[
 \psi\left(\frac{\mathsf Y_i}{\mathsf X_i}\right)
 \right]
 =
 \frac1n
 \E_\pi\left[
 \sum_{j=1}^n
 \psi\left(\frac{\mathsf Y_j}{\mathsf X_j}\right)
 \right]
 \leq
 \frac{\Ent(\rho\mid\mu_n)}{\beta n}.
\]
This proves \eqref{EqOneGapTransport}.
\end{proof}

For each \(n\geq3\) with \(p_{L,n}>0\), apply
Lemma~\ref{LemCyclicGapTransport} with \(\rho=\rho_{L,n}\), and symmetrise
the resulting coupling.  Denote its law by \(\pi_{L,n}\) and write it
 as \((\mathsf X^{L,n},\mathsf Y^{L,n})\).  When \(p_{L,n}=0\), set instead
\(\pi_{L,n}=(\operatorname{Id},\operatorname{Id})_*\mu_n\); this convention
has no effect on any weighted sum.  Equations
\eqref{EqFixedSectorEntropy}, \eqref{EqGapEntropyContraction}, and
\eqref{EqOneGapTransport} give
\begin{equation}\label{EqGoodSectorOneGap}
 \sum_{\substack{n\geq3\\L/2\leq n\leq2L}}
 p_{L,n}
 \E_{\pi_{L,n}}\left[
 \psi\left(\frac{\mathsf Y_1^{L,n}}{\mathsf X_1^{L,n}}\right)\right]
 \leq
 \frac{2\mathcal H_L^{\mathrm{circ}}}{\beta L}
 =o(1).
\end{equation}
Because each \(\pi_{L,n}\) is cyclically invariant, the same bound holds
after replacing coordinate \(1\), separately in each sector, by any choice
\(i_n\in\{1,\ldots,n\}\).

Observe that, the reference gaps have mean one:
\begin{equation}\label{EqReferenceGapMean}
 \E_{\pi_{L,n}}[\mathsf X_i^{L,n}]=1.
\end{equation}
Indeed, their sum is \(n\), and \(\mu_n\) is cyclically invariant.
For \(\varepsilon>0\) and \(M>1\),
\begin{align}
 \pi_{L,n}\left(
 |\mathsf Y_i^{L,n}-\mathsf X_i^{L,n}|>\varepsilon\right)
 &\leq
 \pi_{L,n}(\mathsf X_i^{L,n}>M)+
 \pi_{L,n}\left(
 \left|\frac{\mathsf Y_i^{L,n}}{\mathsf X_i^{L,n}}-1\right|
 >\frac{\varepsilon}{M}\right)\notag\\
 &\leq
 \frac1M+
 \frac{
 \E_{\pi_{L,n}}[\psi(\mathsf Y_i^{L,n}/\mathsf X_i^{L,n})]
 }{
 \displaystyle
 \inf_{\{q>0:\,|q-1|\geq\varepsilon/M\}}\psi(q)
 }.
\label{EqGapDifferenceFromPsi}
\end{align}
First let \(L\to\infty\) in the weighted version of this estimate, and
then let \(M\to\infty\).  For every fixed \(k\), all the displayed
coordinates exist once \(L\) is sufficiently large.  A union bound then
shows that
\begin{equation}\label{EqFixedGapBlock}
 \sum_{\substack{n\geq3\\L/2\leq n\leq2L}}
 p_{L,n}\,
 \pi_{L,n}\left(
 \max_{1\leq i\leq k}
 |\mathsf Y_i^{L,n}-\mathsf X_i^{L,n}|
 \mathbin{\vee}
 \max_{n-k+1\leq j\leq n}
 |\mathsf Y_j^{L,n}-\mathsf X_j^{L,n}|>\varepsilon
 \right)
 \longrightarrow0.
\end{equation}

\subsection{The Palm limit of the circular ensembles}

Recall from Section~\ref{SectionCircularRecovery} that \(\bP_n\) is the
stationary periodic lift of \(\bQ_{n,\beta}\).  Lemma
\ref{LemSineFiniteEnergy}, specifically
\eqref{EqCircularLocalAndPalmLimit}, gives the unrooted and reduced-Palm
limits simultaneously.

We next identify this Palm law with the uniformly rooted cyclic gap
 law.  Extend \(x=(x_1,\ldots,x_n)\in\mathcal S_n^\circ\) periodically
 by \(x_{i+n}=x_i\), and put
\[
 s_0(x)=0,\qquad
 s_k(x)=\sum_{i=1}^kx_i\quad(k\geq1),\qquad
 s_{-k}(x)=-\sum_{i=0}^{k-1}x_{-i}\quad(k\geq1).
\]
Define the periodised reduced rooted configuration
\begin{equation}\label{EqRootedGapConfiguration}
 \mathsf{Root}_n(x)
 \defeq
 \sum_{k\in\mathbb Z\setminus\{0\}}\delta_{s_k(x)}.
\end{equation}
For the local vague topology, \(\mathsf{Root}_n\) is continuous on
\(\mathcal S_n^\circ\), hence Borel: on each compact window it is locally a
 finite sum of atoms whose partial-sum locations depend continuously on \(x\).
The atoms at nonzero multiples of \(n\) are the other periodic copies
of the selected root.  Rotation invariance and the Campbell formula on one
period give, for every bounded Borel \(F\),
\[
 \bP_n^{!0}(F)
 =\frac1n\int_{\Conf_n(I_n)}
   \sum_{j=1}^n
   F\!\left(\mathsf{Root}_n(\mathsf{Gap}_{n,j}(\bzeta))\right)
   \bQ_{n,\beta}(\dd\bzeta)
 =(\mathsf{Root}_n)_*\mu_n(F).
\]
Thus
\begin{equation}\label{EqReferenceGapPalmIdentity}
 (\mathsf{Root}_n)_*\mu_n=\bP_n^{!0}.
\end{equation}

Define the cyclically rooted candidate law
\begin{equation}\label{EqCyclicCandidateRootLaw}
 \widehat{\mathfrak R}_L
 \defeq
 p_{L,0}\delta_\varnothing+
 \sum_{n\geq1}p_{L,n}(\mathsf{Root}_n)_*\rho_{L,n}.
\end{equation}

\begin{prop}\label{PropCyclicCandidateLimit}
The cyclically rooted candidate laws from \eqref{EqCyclicCandidateRootLaw} satisfy
\begin{equation}\label{EqCyclicCandidateLimit}
 \widehat{\mathfrak R}_L
 \overset{\mathrm{w}}{\longrightarrow}
 (\Sine_\beta)^{!0}.
\end{equation}
\end{prop}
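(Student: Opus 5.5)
We compare \(\widehat{\mathfrak R}_L\) with the matching mixture of circular Palm laws, and then invoke Lemma~\ref{LemSineFiniteEnergy}. Put
\[
 \widehat{\mathfrak S}_L\defeq p_{L,0}\delta_\varnothing+\sum_{n\geq1}p_{L,n}(\mathsf{Root}_n)_*\mu_n
 =p_{L,0}\delta_\varnothing+\sum_{n\geq1}p_{L,n}\bP_n^{!0},
\]
where the second expression uses \eqref{EqReferenceGapPalmIdentity}. Fix a bounded local function \(F\) that is Lipschitz for a suitable local metric, say depending only on the restriction to \([-R,R]\) and continuous there, or a Laplace functional \(F(\bgamma)=\exp(-\int f\dd\bgamma)\) with \(f\in C_c\). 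It suffices to show
\[
 |\widehat{\mathfrak R}_L(F)-\widehat{\mathfrak S}_L(F)|\to0
 \qquad\text{and}\qquad
 \widehat{\mathfrak S}_L(F)\to(\Sine_\beta)^{!0}(F),
\]
since Laplace functionals with compactly supported \(f\) are convergence determining for locally tight families.

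\emph{The second convergence.} By Chebyshev's inequality and \(v(L)=o(L)\), the mass of the sectors \(\{n<L/2\}\cup\{n>2L\}\) tends to zero. On the remaining sectors \(n\to\infty\) uniformly as \(L\to\infty\). Hence \(\sup_{L/2\leq n\leq 2L}|\bP_n^{!0}(F)-(\Sine_\beta)^{!0}(F)|\to0\) by Lemma~\ref{LemSineFiniteEnergy}, and the bad sectors contribute at most \(2\|F\|_\infty\cdot o(1)\). For the same reason, the sectors \(n\leq 2\) and the empty sector in \(\widehat{\mathfrak R}_L\) are negligible.

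\emph{The first convergence via the coupling.} On a good sector, under \(\pi_{L,n}\) we compare \(\mathsf{Root}_n(\mathsf Y^{L,n})\) with \(\mathsf{Root}_n(\mathsf X^{L,n})\). Their atoms in \([-R,R]\) are partial sums \(s_k\) of consecutive gaps on either side of the root. First choose \(k\) so that, under the reference, the probability that fewer than \(k\) gaps on either side exhaust \([-R-1,R+1]\) is below \(\eta\), uniformly in large \(n\). This uses tightness of \(\bP_n^{!0}\), which follows from the uniform local count bound in the proof of Lemma~\ref{LemSineFiniteEnergy}: the event that \(s_k(\mathsf X)\leq R+1\) means \(\bP_n^{!0}\) has at least \(k\) points in \([0,R+1]\), whose probability is at most \(\E_{\bP_n^{!0}}[\bC([0,R+1])]/k\leq C_R/k\). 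Next apply \eqref{EqFixedGapBlock} with this \(k\) and \(\varepsilon\) small. Outside an event of weighted probability \(\eta+o(1)\), the first \(k\) gaps forward and the last \(k\) gaps (the backward gaps, indices \(n-k+1,\dots,n\)) of \(\mathsf Y\) are within \(\varepsilon\) of those of \(\mathsf X\). Then all partial sums \(|s_j|\leq k\) agree within \(k\varepsilon\), and the \(\mathsf Y\)-configuration also exhausts \([-R,R]\) within these \(k\) gaps. Consequently the two restricted configurations to \([-R,R]\) are matched atom by atom with displacement at most \(k\varepsilon\), up to atoms crossing \(\pm R\). Taking \(F\) a Laplace functional with \(f\in C_c\) handles this cleanly, since \(|\int f\dd\bgamma-\int f\dd\bgamma'|\leq k\cdot\mathrm{Lip}(f)\,k\varepsilon\) under such matching. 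We then let \(L\to\infty\), then \(\varepsilon\to0\), then \(\eta\to0\).

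\emph{Main obstacle.} The main technical point is the uniform-in-\(n\) tightness in the choice of \(k\), together with the fact that \eqref{EqFixedGapBlock} is only a weighted-average statement over sectors. Both are handled by working throughout with weighted sums over \(L/2\leq n\leq2L\) and bounding differences by \(\sum_n p_{L,n}\pi_{L,n}(\text{bad})\). Local tightness of \(\widehat{\mathfrak R}_L\) then follows from that of \(\widehat{\mathfrak S}_L\) plus the coupling, which identifies the limit as \((\Sine_\beta)^{!0}\).
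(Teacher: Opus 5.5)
Your proposal is correct and follows essentially the same route as the paper. You discard the Chebyshev-bad sectors, couple the rooted configurations through \(\pi_{L,n}\), use tightness of \(\bP_n^{!0}\) to choose \(k\), use \eqref{EqFixedGapBlock} to match atoms, and conclude from \(\bP_n^{!0}\to(\Sine_\beta)^{!0}\) uniformly over \(n\geq L/2\); the only difference is cosmetic: you test against Lipschitz Laplace functionals, whereas the paper builds the local metric \(\mathsf d_{\mathrm{loc}}\) and applies the coupling inequality for \(\mathsf d_{\mathrm{BL}}\).
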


\begin{proof}
Chebyshev's inequality and Proposition~\ref{PropInputs}(c) give
\begin{equation}\label{EqRootGoodSector}
 \sum_{\{n<L/2\text{ or }n>2L\}}p_{L,n}
 \leq\frac{4v(L)}{L^2}=o(1).
\end{equation}
In particular, the sectors \(n=0,1,2\) have vanishing total
probability.

Put
\[
 \mathfrak q_L^{\mathrm{good}}\defeq
 \sum_{\substack{n\geq3\\L/2\leq n\leq2L}}p_{L,n}.
\]
Then \(\mathfrak q_L^{\mathrm{good}}\to1\), so
\(\mathfrak q_L^{\mathrm{good}}>0\) for all sufficiently large \(L\).  On those
\(L\), define the probability coupling
\[
 \pi_L^{\mathrm{good}}
 \defeq\frac1{\mathfrak q_L^{\mathrm{good}}}
 \sum_{\substack{n\geq3\\L/2\leq n\leq2L}}
 p_{L,n}
 (\mathsf{Root}_n,\mathsf{Root}_n)_*\pi_{L,n}.
\]
We claim that the distance between its two coordinates in the local vague
topology converges to zero in \(\pi_L^{\mathrm{good}}\)-probability.

 To see this without using discontinuous hard restrictions, choose
 \(\chi_m\in C_c(\R)\), with \(0\leq\chi_m\leq1\), equal to one on
 \([-m,m]\) and supported in \((-m-1,m+1)\).  For a locally finite
 measure \(\bgamma\), set
 \[
  (\chi_m\bgamma)(A)\defeq\int_A\chi_m(x)\bgamma(\dd x).
 \]
 For finite Radon measures, let
 \[
  \mathsf d_{\mathrm{fm}}(\bgamma,\betaeta)
  \defeq
  \sup_{\substack{\|f\|_\infty\leq1\\
                   \operatorname{Lip}(f)\leq1}}
  \left|\int_\R f(x)\bgamma(\dd x)
        -\int_\R f(x)\betaeta(\dd x)\right|.
 \]
 Now put
\[
 \mathsf d_{\mathrm{loc}}(\bgamma,\betaeta)
 \defeq
 \sum_{m\geq1}2^{-m}\left(
  1\wedge
  \mathsf d_{\mathrm{fm}}(\chi_m\bgamma,\chi_m\betaeta)
 \right).
\]
 This metric \(\mathsf d_{\mathrm{loc}}\) generates the local vague topology.

Fix \(m\).  Lemma~\ref{LemSineFiniteEnergy} and
\eqref{EqReferenceGapPalmIdentity} imply tightness of the first marginals
of \(\pi_L^{\mathrm{good}}\).  Hence, given \(\varepsilon>0\), one may choose
\(k\) so that, for all large \(L\), the first coordinate has at most \(k\)
atoms in \([-m-2,m+2]\), except on an event of probability at most
\(\varepsilon\).  On its complement, use \eqref{EqFixedGapBlock} with
\(k+1\).  If the first \(k+1\) gaps on both sides differ by at most
\(\delta/(k+1)\), corresponding atoms differ in position by at most
\(\delta<1\), and the \((k+1)\)-st target atom on each side remains outside
\((-m-1,m+1)\).  Thus every target atom in the support of \(\chi_m\) is
paired with one of the first \(k\) reference atoms.  Positivity and ordering
of the gaps place every subsequent target atom still farther outside; every
reference atom meeting the support has one of the paired ranks.  Writing \(\omega_m\)
for the modulus of continuity of
\(\chi_m\), every test function \(f\) with
\(\|f\|_\infty\leq1\) and \(\operatorname{Lip}(f)\leq1\) then satisfies
\[
 \left|
  \int_\R f(x)\chi_m(x)\dd\mathsf{Root}_n(\mathsf X^{L,n})(x)
  -\int_\R f(x)\chi_m(x)\dd\mathsf{Root}_n(\mathsf Y^{L,n})(x)
 \right|
 \leq C(k+1)[\delta+\omega_m(\delta)].
\]
The modulus term covers a paired atom that crosses the edge of
\(\operatorname{supp}(\chi_m)\).  The block-failure probability under
\(\pi_L^{\mathrm{good}}\) is the weighted sum in
\eqref{EqFixedGapBlock} divided by \(\mathfrak q_L^{\mathrm{good}}\); it
tends to zero because \(\mathfrak q_L^{\mathrm{good}}\to1\).  Therefore the
\(m\)-th finite-measure distance
converges to zero in \(\pi_L^{\mathrm{good}}\)-probability after first taking
\(L\to\infty\) and then \(\delta\downarrow0\).  For fixed \(M\), apply this
to \(m\leq M\); the remaining tail in \(\mathsf d_{\mathrm{loc}}\) is at most
\(2^{-M}\).  Finally let \(M\to\infty\) to prove the claim.

Let \(\mathsf d_{\mathrm{BL}}\) denote the bounded-Lipschitz metric on
probability laws induced by \(\mathsf d_{\mathrm{loc}}\).  Since
\(\mathsf d_{\mathrm{loc}}\leq1\), this convergence in probability
also gives convergence in mean.  Hence the coupling inequality yields
\[
 \mathsf d_{\mathrm{BL}}\!\left(
  (\operatorname{pr}_1)_*\pi_L^{\mathrm{good}},
  (\operatorname{pr}_2)_*\pi_L^{\mathrm{good}}
 \right)
 \leq
 \int\mathsf d_{\mathrm{loc}}(\bgamma,\betaeta)\,
 \pi_L^{\mathrm{good}}(\dd\bgamma,\dd\betaeta)
 \longrightarrow0.
\]

Since \(n\geq L/2\) on these sectors,
\eqref{EqCircularLocalAndPalmLimit} gives
\[
 \sup_{n\geq L/2}
 \mathsf d_{\mathrm{BL}}\bigl(\bP_n^{!0},(\Sine_\beta)^{!0}\bigr)
 \longrightarrow0.
 \]
Together
with \(\mathfrak q_L^{\mathrm{good}}\to1\) and
\eqref{EqRootGoodSector}, this also restores the
discarded sectors and proves
\eqref{EqCyclicCandidateLimit}.
\end{proof}

\subsection{Removing the cyclic seam and the random dilation}

The preceding law uses an artificial periodic continuation across the
cut of \(I_L\).  We now compare it to the ordinary uniformly rooted
restriction, namely the probability law \(\mathfrak R_L\) defined by
\eqref{EqActualRootLaw}.  

\begin{lem}\label{LemRemoveCyclicSeam}
For every bounded local continuous function \(F\),
\begin{equation}\label{EqRemoveCyclicSeam}
 \widehat{\mathfrak R}_L(F)-\mathfrak R_L(F)
 \longrightarrow0.
\end{equation}
\end{lem}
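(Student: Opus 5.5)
The plan is to realise \(\widehat{\mathfrak R}_L\) and \(\mathfrak R_L\) on a single probability space, using the same configuration and the same uniformly chosen root, and to show that the two rooted configurations agree on any fixed window unless the root lies near the seam. Draw \(\bC\sim\bP\) and let \(\bZ_L\) be as before. On \(\{\sN_L=n\geq1\}\), write \(0<u_1<\cdots<u_n<n\) for the atoms of \(\bZ_L\), which are almost surely simple and avoid the endpoints, and choose \(j\) uniformly in \(\{1,\ldots,n\}\) independently of \(\bC\).

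First I identify both laws with this construction. Since \(\overline\nu_{L,n}=\operatorname{Law}_{\bP}(\bZ_L\mid\sN_L=n)\), the definitions \eqref{EqGapLaws} and \eqref{EqCyclicCandidateRootLaw} show that \(\widehat{\mathfrak R}_L\) is the law of \(\mathsf{Root}_n(\mathsf{Gap}_{n,j}(\bZ_L))\), with \(\delta_\varnothing\) on the empty sector. This is the periodic lift of \(\bZ_L\), translated so that \(u_j\) sits at the origin, with that root atom removed. On the other side, dilation commutes with translation: \((T_{n/L})_*\theta_x(\bC_{I_L}-\delta_x)=\theta_{u}(\bZ_L-\delta_{u})\) for \(u=nx/L\). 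Hence \eqref{EqActualRootLaw} says that \(\mathfrak R_L\) is the law of \(\theta_{u_j}(\bZ_L-\delta_{u_j})\) for the same uniform root. The empty sector contributes \(p_{L,0}F(\varnothing)\) to both laws.

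Next I bound the difference. Let \(F\) depend only on the restriction to \([-m,m]\). The two rooted configurations agree on \([-m,m]\) whenever \(u_j\in(m,n-m)\): in that case \([u_j-m,u_j+m]\subset(0,n)\), so no periodic copy enters the window. Therefore
\[
 |\widehat{\mathfrak R}_L(F)-\mathfrak R_L(F)|
 \leq 2\|F\|_\infty\,
 \E_{\bP}\!\left[\1_{\{\sN_L\geq1\}}\frac{\bZ_L([0,m])+\bZ_L([\sN_L-m,\sN_L])}{\sN_L}\right].
\]
I split this expectation according to sectors.

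On the bad sectors \(\{\sN_L<L/2\text{ or }\sN_L>2L\}\), the quotient is at most \(1\). Their total probability is \(o(1)\) by \eqref{EqRootGoodSector}, so they contribute \(o(1)\).

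On the good sectors \(L/2\leq\sN_L\leq2L\), undoing the dilation turns \([0,m]\) into \([0,mL/\sN_L]\subset[0,2m]\), and similarly at the right end. This gives the bound
\[
 \frac{2}{L}\,\E_{\bP}\bigl[\bC([0,2m])+\bC([L-2m,L])\bigr]=\frac{8m}{L}
\]
by intensity one, which also tends to zero.

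The step needing the most care is the agreement claim near the seam. There are two points to check. First, the reduced rooted configuration from \(\mathsf{Root}_n\) is built from gap partial sums and includes the periodic copies of the root at nonzero multiples of \(n\); these lie outside \([-m,m]\) once \(n>m\), which holds on the good sectors. Second, endpoint atoms have probability zero, so the boundary cases do not matter. Since locality alone suffices, continuity of \(F\) is not even used.
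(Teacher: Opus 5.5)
Your proof is correct. It uses the same coupling as the paper: one sample of $\bC$, the dilated configuration $\bZ_L$, and a single uniformly chosen root, from which both the cyclic and the ordinary rooted configurations are read off. Where you differ is in how you control the mismatch between the two.

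The paper works with ranks. It invokes tightness of $(\widehat{\mathfrak R}_L)$, which it obtains from Proposition~\ref{PropCyclicCandidateLimit}, to say that the cyclic rooted configuration has at most $k$ atoms in $[-R,R]$ with high probability. It then shows that no wrapped atom can enter the window unless the root is among the $k+1$ particles nearest the seam in rank, an event of conditional probability at most $2(k+1)/n$.

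You work with positions instead. Wrapped atoms can enter $[-m,m]$ only if the root lies within distance $m$ of the seam. The conditional probability of that is the number of atoms of $\bZ_L$ in $[0,m]\cup[n-m,n]$ divided by $n$. Undoing the dilation and using intensity one turns this into an explicit $O(m/L)+O(v(L)/L^2)$ bound.

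Your route is more elementary and quantitative. It also makes the lemma logically independent of the transport and Palm-limit machinery behind Proposition~\ref{PropCyclicCandidateLimit}, since it needs only intensity one and the Chebyshev sector bound. The paper's rank argument avoids any first-moment computation in the original coordinates, but pays for it by importing tightness from that deeper result.

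One harmless slip: on the bad sectors, the quotient $(\bZ_L([0,m])+\bZ_L([\sN_L-m,\sN_L]))/\sN_L$ can be as large as $2$ when the two end intervals overlap, not $1$. You can either use that bound or bound the mismatch probability itself by $1$; the contribution is $o(1)$ either way.
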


\begin{proof}
We construct an explicit coupling of the two rooted laws.  Sample
\(\bC\) under \(\bP\), put
\[
 n=\sN_L,
 \qquad
 \bzeta=\bZ_L=(T_{n/L})_*\bC_{I_L}
\]
on \(\{n\geq1\}\), and set \(\bzeta=\varnothing\) when \(n=0\).
Conditionally on \(n\geq1\), the configuration \(\bzeta\) is almost surely
simple and has no point at \(0\) or \(n\).  Write
\[
 \bzeta=\sum_{i=1}^n\delta_{u_i},
 \qquad
 0<u_1<\cdots<u_n<n,
\]
and, conditionally on \(\bzeta\), choose an index \(J\) uniformly from
\(\{1,\ldots,n\}\).

Define the ordinary reduced rooted configuration by
\[
 \Gamma_L^{\mathrm{ord}}
 \defeq
 \sum_{\substack{1\leq i\leq n\\i\neq J}}
 \delta_{u_i-u_J},
\]
and define its periodised version by
\[
 \Gamma_L^{\mathrm{cyc}}
 \defeq
 \sum_{\substack{q\in\mathbb Z,\ 1\leq i\leq n\\
                  (q,i)\neq(0,J)}}
 \delta_{u_i-u_J+qn}.
\]
On \(\{n=0\}\), set both configurations equal to \(\varnothing\).
Let \(\mathbb P_L^{\mathrm{cpl}}\) and
\(\mathbb E_L^{\mathrm{cpl}}\) denote probability and expectation under
this auxiliary construction.

On \(\{n\geq1\}\), the original position of the selected particle is
\((L/n)u_J\), and therefore
\[
 \Gamma_L^{\mathrm{ord}}
 =
 (T_{n/L})_*
 \theta_{(L/n)u_J}
 \left(
  \bC_{I_L}-\delta_{(L/n)u_J}
 \right).
\]
Moreover, by the definitions of the cyclic gaps and their periodised rooted
configuration,
\[
 \Gamma_L^{\mathrm{cyc}}
 =
 \mathsf{Root}_n\bigl(\mathsf{Gap}_{n,J}(\bzeta)\bigr).
\]
Since, conditionally on \(\{\sN_L=n\}\), the law of \(\bzeta\) is
\(\overline\nu_{L,n}\) and \(J\) is uniform on
\(\{1,\ldots,n\}\), it follows that
\begin{equation}\label{EqSeamCouplingMarginals}
 \operatorname{Law}_{\mathbb P_L^{\mathrm{cpl}}}
 (\Gamma_L^{\mathrm{cyc}})
 =\widehat{\mathfrak R}_L,
 \qquad
 \operatorname{Law}_{\mathbb P_L^{\mathrm{cpl}}}
 (\Gamma_L^{\mathrm{ord}})
 =\mathfrak R_L.
\end{equation}

Let \(F\) depend only on the restriction to a compact interval contained in
\([-R,R]\), and fix \(\varepsilon>0\).
Proposition~\ref{PropCyclicCandidateLimit} implies that
\((\widehat{\mathfrak R}_L)\) is tight.  Hence there are
\(L_0<\infty\) and a compact set
\(\mathcal K_\varepsilon\subset\Conf(\R)\) such that
\[
 \inf_{L\geq L_0}
 \widehat{\mathfrak R}_L(\mathcal K_\varepsilon)
 \geq1-\varepsilon.
\]
Compactness in the vague topology implies
\[
 \sup_{\bgamma\in\mathcal K_\varepsilon}
 \bgamma([-R,R])<\infty.
\]
Indeed, choose \(\varphi\in C_c(\R)\) with
\(\varphi\geq\1_{[-R,R]}\).  The map
\[
 \bgamma\longmapsto\int_\R\varphi(x)\bgamma(\dd x)
\]
is continuous and is therefore bounded on
\(\mathcal K_\varepsilon\).  Consequently, there exists \(k\in\N\) such
that
\begin{equation}\label{EqCyclicWindowCount}
 \sup_{L\geq L_0}
 \mathbb P_L^{\mathrm{cpl}}\left(
 \Gamma_L^{\mathrm{cyc}}([-R,R])>k
 \right)
 \leq\varepsilon.
\end{equation}

For \(n\geq1\), introduce the event that the selected root lies within
\(k\) particle ranks of the cut between \(u_n\) and \(u_1\):
\[
 \mathcal B_{n,k}
 \defeq
 \{J\leq k+1\}\cup\{J\geq n-k\}.
\]
Since \(J\) is conditionally uniform,
\begin{equation}\label{EqRootNearSeam}
 \mathbb P_L^{\mathrm{cpl}}
 \left(\mathcal B_{n,k}\mid\bzeta\right)
 \leq\frac{2(k+1)}{n}.
\end{equation}

We claim that, for all sufficiently large \(L\), on the event
\[
 \mathcal G_{L,R,k}
 \defeq
 \left\{\frac L2\leq n\leq2L\right\}
 \cap\mathcal B_{n,k}^{\,c}
 \cap
 \left\{\Gamma_L^{\mathrm{cyc}}([-R,R])\leq k\right\},
\]
one has
\begin{equation}\label{EqSeamLocalAgreement}
 (\Gamma_L^{\mathrm{cyc}})_{[-R,R]}
 =
 (\Gamma_L^{\mathrm{ord}})_{[-R,R]}.
\end{equation}

To prove the claim, take \(L\) sufficiently large that \(L/2>2R\).
On \(\{L/2\leq n\leq2L\}\), this gives \(n>2R\).
Therefore an atom of
\(\Gamma_L^{\mathrm{cyc}}\setminus\Gamma_L^{\mathrm{ord}}\) lying in
\([-R,R]\) can only have one of the following two forms:
\begin{equation}\label{EqPossibleWrappedAtoms}
 u_i-u_J+n\in(0,R],
 \quad i<J,
 \qquad\text{or}\qquad
 u_i-u_J-n\in[-R,0),
 \quad i>J.
\end{equation}

Suppose first that
\[
 u_i-u_J+n\in(0,R],
 \qquad i<J.
\]
Every intervening clockwise atom
\[
 u_{J+1}-u_J,\ldots,u_n-u_J,\,
 u_1-u_J+n,\ldots,u_i-u_J+n
\]
then also belongs to \((0,R]\).  The number of these atoms is
\[
 n-J+i\geq n-J+1.
\]
On \(\mathcal B_{n,k}^{\,c}\), one has \(J<n-k\), and hence
\[
 n-J+i\geq k+2.
\]
It follows that
\[
 \Gamma_L^{\mathrm{cyc}}([-R,R])\geq k+2,
\]
contrary to the definition of \(\mathcal G_{L,R,k}\).

Similarly, suppose that
\[
 u_i-u_J-n\in[-R,0),
 \qquad i>J.
\]
Every intervening counterclockwise atom also belongs to \([-R,0)\), and
their number is
\[
 J+n-i\geq J.
\]
On \(\mathcal B_{n,k}^{\,c}\), one has \(J>k+1\), so again
\[
 \Gamma_L^{\mathrm{cyc}}([-R,R])\geq k+2,
\]
which is a contradiction.  Thus no wrapped atom can enter \([-R,R]\) on
\(\mathcal G_{L,R,k}\), proving \eqref{EqSeamLocalAgreement}.

A union bound, \eqref{EqRootNearSeam}, and
\eqref{EqRootGoodSector} now give
\[
 \begin{aligned}
 &\mathbb P_L^{\mathrm{cpl}}\left(
 (\Gamma_L^{\mathrm{cyc}})_{[-R,R]}
 \neq
 (\Gamma_L^{\mathrm{ord}})_{[-R,R]}
 \right)\\
 &\quad\leq
 \sum_{\{n<L/2\text{ or }n>2L\}}p_{L,n}
 +
 \mathbb P_L^{\mathrm{cpl}}\left(
  \Gamma_L^{\mathrm{cyc}}([-R,R])>k
 \right)+
 \sum_{\substack{n\geq1\\L/2\leq n\leq2L}}
 p_{L,n}\frac{2(k+1)}{n}\\
 &\quad\leq
 \frac{4v(L)}{L^2}
 +\varepsilon
 +\frac{4(k+1)}{L}.
 \end{aligned}
\]
Consequently,
\[
 \limsup_{L\to\infty}
 \mathbb P_L^{\mathrm{cpl}}\left(
 (\Gamma_L^{\mathrm{cyc}})_{[-R,R]}
 \neq
 (\Gamma_L^{\mathrm{ord}})_{[-R,R]}
 \right)
 \leq\varepsilon.
\]
Since \(\varepsilon>0\) is arbitrary,
\begin{equation}\label{EqSeamMismatchProbability}
 \mathbb P_L^{\mathrm{cpl}}\left(
 (\Gamma_L^{\mathrm{cyc}})_{[-R,R]}
 \neq
 (\Gamma_L^{\mathrm{ord}})_{[-R,R]}
 \right)
 \longrightarrow0.
\end{equation}

Finally, locality of \(F\), \eqref{EqSeamCouplingMarginals}, and
\eqref{EqSeamMismatchProbability} yield
\[
 \begin{aligned}
 \left|
 \widehat{\mathfrak R}_L(F)-\mathfrak R_L(F)
 \right|
 &=
 \left|
 \mathbb E_L^{\mathrm{cpl}}\left[
 F(\Gamma_L^{\mathrm{cyc}})
 -F(\Gamma_L^{\mathrm{ord}})
 \right]\right|\\
 &\leq
 2\lVert F\rVert_\infty
 \mathbb P_L^{\mathrm{cpl}}\left(
 (\Gamma_L^{\mathrm{cyc}})_{[-R,R]}
 \neq
 (\Gamma_L^{\mathrm{ord}})_{[-R,R]}
 \right)\longrightarrow0.
 \end{aligned}
\]
This proves \eqref{EqRemoveCyclicSeam}.
\end{proof}
\begin{prop}\label{PropCandidatePalmLimit}
The ordinary rooted laws satisfy
\begin{equation}\label{EqCandidatePalmLimit}
 \mathfrak R_L\overset{\mathrm{w}}{\longrightarrow}\bP^{!0}.
\end{equation}
\end{prop}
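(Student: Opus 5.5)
The plan is to compare \(\mathfrak R_L\) with the undilated Campbell average and identify the latter with \(\bP^{!0}\) through the Campbell identity. Define the undilated rooted law
\[
 \mathfrak R_L^{0}(F)\defeq\bP(\sN_L=0)F(\varnothing)+\E_{\bP}\!\left[\1_{\{\sN_L>0\}}\frac1{\sN_L}\sum_{x\in\bC\cap I_L}F\bigl(\theta_x(\bC_{I_L}-\delta_x)\bigr)\right],
\]
and a second comparison law \(\mathfrak R_L^{1}(F)\defeq L^{-1}\E_{\bP}[\sum_{x\in\bC\cap I_L}F(\theta_x(\bC-\delta_x))]\). By the Campbell characterisation of \(\bP^{!0}\) with \(h(x,\betaeta)=\1_{I_L}(x)F(\betaeta)\), we have \(\mathfrak R_L^{1}=\bP^{!0}\) exactly for every \(L\). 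It therefore suffices to show \(\mathfrak R_L(F)-\mathfrak R_L^{1}(F)\to0\) for bounded local continuous \(F\) (say depending on \([-R,R]\), with \(\|F\|_\infty\le1\)). Weak convergence then follows from tightness of \(\mathfrak R_L\), which is itself inherited from Lemma \ref{LemRemoveCyclicSeam} and Proposition \ref{PropCyclicCandidateLimit}, together with the fact that bounded local continuous functions determine the law.

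First step (the \(1/\sN_L\) versus \(1/L\) normalisation and the truncation to \(I_L\)). We bound
\[
 |\mathfrak R_L^{0}(F)-\mathfrak R_L^{1}(F)|\le\E_{\bP}\bigl[|\sN_L/L-1|\bigr]+\bP(\sN_L=0)+\frac1L\E_{\bP}\Bigl[\#\{x\in\bC\cap I_L:\ \mathrm{dist}(x,I_L^c)\le R\}\Bigr].
\]
Here we use \(\bigl|\1_{\{\sN_L>0\}}/\sN_L-1/L\bigr|\,\sN_L\le|\sN_L/L-1|\). The point is that for roots at distance more than \(R\) from the endpoints, \(\theta_x(\bC_{I_L}-\delta_x)\) and \(\theta_x(\bC-\delta_x)\) agree on \([-R,R]\). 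The first term is at most \(\sqrt{v(L)}/L\to0\), the second is at most \(v(L)/L^2\), and the last equals \(2R/L\) by intensity one.

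Second step (removing the random dilation). We show \(\mathfrak R_L(F)-\mathfrak R_L^{0}(F)\to0\) by coupling: both are averages over the same uniformly chosen root \(x\), with configurations \(\Gamma=\theta_x(\bC_{I_L}-\delta_x)\) and \((T_{\sN_L/L})_*\Gamma\). Fix \(\varepsilon>0\). Using tightness of \(\mathfrak R_L^{1}=\bP^{!0}\) and the first step, we choose \(k\) so that \(\Gamma([-2R,2R])\le k\) except with probability \(\varepsilon\). On the event \(\{|\sN_L/L-1|\le\delta\}\), whose complement has probability at most \(v(L)/(\delta^2L^2)\to0\), each atom \(y\in[-2R,2R]\) moves by at most \(2R\delta\). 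Atoms outside \([-2R,2R]\) stay outside \([-R,R]\) when \(\delta<1/2\). So the restrictions to a slightly enlarged window differ by a matching of at most \(k\) atoms, each displaced by at most \(2R\delta\).

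The main obstacle I expect is the boundary of the window. Here I would use continuity of \(F\) in the vague topology rather than exact equality: the matched configurations, viewed through \(\chi_m\)-weighted bounded-Lipschitz distances as in the proof of Proposition \ref{PropCyclicCandidateLimit}, are within \(Ck(\delta+\omega(\delta))\). Since \(F\) is uniformly continuous on the relatively compact set of configurations with at most \(k\) atoms in the window (compactness in the vague metric), \(|F((T_{\sN_L/L})_*\Gamma)-F(\Gamma)|\) is small uniformly there. Letting \(L\to\infty\), then \(\delta\downarrow0\), then \(\varepsilon\downarrow0\), gives \(\mathfrak R_L(F)-\bP^{!0}(F)\to0\), which proves \eqref{EqCandidatePalmLimit}.
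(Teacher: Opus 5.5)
Your proposal is correct, but it takes a different route from the paper.

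\textbf{Comparison with the paper.} The paper works under the size-biased Campbell measure $\mathsf{Camp}_L$ on the \emph{full} configuration. There $\theta_x(\bC-\delta_x)$ has law exactly $\bP^{!0}$ for every $L$. The paper proves separately that $\sN_L/L\to1$ under this biased law, deduces joint convergence of $(\sN_L/L,\theta_x(\bC-\delta_x))$ to $\delta_1\otimes\bP^{!0}$, and removes the dilation in one stroke by continuity of $(a,\betaeta)\mapsto(T_a)_*\betaeta$ and the continuous mapping theorem. The $1/\sN_L$ normalisation and the restriction to $I_L$ are handled as separate $o(1)$ corrections.

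You instead remove the dilation under the uniform-root measure, whose configuration marginal is exactly $\bP$. The bad dilation event then costs only Chebyshev, and no size-bias estimate is needed. The price is that your undilated $\Gamma$ is not exactly $\bP^{!0}$-distributed. You therefore need your first step to control window counts, and you replace continuous mapping by an explicit matching argument. Since your first step already yields $\mathfrak R_L^0\to\bP^{!0}$, you could also finish by continuous mapping exactly as the paper does.

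\textbf{Points to tighten.}

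First, your first-step bound never uses continuity of $F$, so it holds uniformly over bounded local Borel $F$ on a fixed window. You should say so, because this is what lets you apply it to the indicator of $\{\Gamma([-2R,2R])>k\}$.

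Second, as literally stated, the set of configurations with at most $k$ atoms in a window is not relatively compact in $\Conf(\R)$. Restricting $(T_a)_*\Gamma$ to a fixed enlarged window also does not give a clean $k$-atom matching, since for $a<1$ atoms enter from outside. The clean reduction uses locality: $F(\Gamma)=F(\Gamma_{[-2R,2R]})$ and, for $|a-1|\leq\delta<1/2$, $F((T_a)_*\Gamma)=F((T_a)_*\Gamma_{[-2R,2R]})$. On your good event, $\Gamma_{[-2R,2R]}$ lies in the vague-compact set $\mathcal K$ of configurations with at most $k$ atoms, all in $[-2R,2R]$. The map $(a,\betaeta)\mapsto F((T_a)_*\betaeta)$ is uniformly continuous on $[1/2,3/2]\times\mathcal K$. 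This settles the window-boundary issue with no matching at all.

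Third, invoking Lemma~\ref{LemRemoveCyclicSeam} and Proposition~\ref{PropCyclicCandidateLimit} for tightness of $\mathfrak R_L$ is not circular, but it is unnecessary. It also imports the entropy machinery into what is a purely structural statement. Bounded local continuous functions, which include the compactly supported Laplace functionals, are already convergence determining.
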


\begin{proof}
By Lemma~\ref{LemSimplicity}, the reduced rooted configurations below are
well defined \(\bP\)-almost surely.
Let \(F:\Conf(\R)\to\R\) be bounded, local, and continuous.  Replacing
the random factor \(1/\sN_L\) by \(1/L\) in
\eqref{EqActualRootLaw} gives
\begin{align}
 &\left|
 \mathfrak R_L(F)
 -
 \frac1L\E_\bP\left[
  \1_{\{\sN_L>0\}}
  \sum_{x\in\bC\cap I_L}
 F\left(
 (T_{\sN_L/L})_*
 \theta_x(\bC_{I_L}-\delta_x)
 \right)\right]\right|\notag\\
 &\qquad\leq
 \|F\|_\infty
 \E_\bP\left[\left|\frac{\sN_L}{L}-1\right|\right]
 \leq
 \|F\|_\infty\frac{\sqrt{v(L)}}{L}
 \longrightarrow0.
\label{EqReplaceParticleNormalisation}
\end{align}
The empty sector is included correctly in this bound.

Introduce the probability measure
\begin{equation}\label{EqCampbellPairMeasure}
 \mathsf{Camp}_L(\dd\bC,\dd x)
 \defeq
 \frac1L\1_{I_L}(x)\bC(\dd x)\bP(\dd\bC).
\end{equation}
It is a probability measure because \(\bP\) has intensity one.
Under \(\mathsf{Camp}_L\), put
\[
 \mathsf U_L^{\mathrm{root}}=x,
\qquad
 \bC_L^{\mathrm{root}}
 =\theta_{\mathsf U_L^{\mathrm{root}}}
   (\bC-\delta_{\mathsf U_L^{\mathrm{root}}}),
 \qquad
 \mathsf{a}_L=\frac{\sN_L}{L}.
\]
The refined Campbell identity and stationarity imply that the
\(\bC_L^{\mathrm{root}}\)-marginal is exactly \(\bP^{!0}\), for every \(L\).

Moreover, for every \(\varepsilon>0\), one has
\begin{align}
 \mathsf{Camp}_L(|\sN_L-L|>\varepsilon L)
 &=\frac1L\E_\bP[\sN_L\1_{\{|\sN_L-L|>\varepsilon L\}}]\notag\\
 &\leq
 \bP(|\sN_L-L|>\varepsilon L)
 +\frac1L\E_\bP[|\sN_L-L|
   \1_{\{|\sN_L-L|>\varepsilon L\}}]\notag\\
 &\leq
 \frac{v(L)}{\varepsilon^2L^2}
 +\frac{v(L)}{\varepsilon L^2}
 \longrightarrow0.
\label{EqDilationUnderCampbellBias}
\end{align}
Thus \(\mathsf{a}_L\to1\) in \(\mathsf{Camp}_L\)-probability.  Every subsequential
limit of \((\mathsf{a}_L,\bC_L^{\mathrm{root}})\) has marginals \(\delta_1\) and \(\bP^{!0}\),
and hence equals \(\delta_1\otimes\bP^{!0}\).  Therefore
\[
 \operatorname{Law}_{\mathsf{Camp}_L}
   (\mathsf{a}_L,\bC_L^{\mathrm{root}})
 \overset{\mathrm{w}}{\longrightarrow}
 \delta_1\otimes\bP^{!0}.
\]
The map
\[
 (a,\betaeta)\longmapsto(T_a)_*\betaeta
\]
is continuous on \((0,\infty)\times\Conf(\R)\).  Indeed, if \(a_j\to a>0\),
then for every compactly supported continuous \(f\), the functions
\(f(a_j\,\cdot)\) have common compact support and converge uniformly to
\(f(a\,\cdot)\).  Consequently,
\begin{equation}\label{EqFullRootRandomDilation}
 \int_{\Conf(\R)\times\R}
 F((T_{\mathsf{a}_L})_*\bC_L^{\mathrm{root}})\,
 \mathsf{Camp}_L(\dd\bC,\dd x)
 \longrightarrow\bP^{!0}(F).
\end{equation}

It remains only to replace the full configuration by its restriction
to \(I_L\).  Suppose that \(F\) depends on the configuration in
\([-R,R]\).  On \(\{1/2\leq\mathsf{a}_L\leq2\}\), the restricted and full
rooted configurations agree in the window used by \(F\), unless
\[
 \mathsf U_L^{\mathrm{root}}\in[0,2R]\cup[L-2R,L].
\]
The \(\mathsf{Camp}_L\)-probability of this endpoint event is at most
\[
 \frac1L
 \E_\bP\left[
  \bC([0,2R]\cup[L-2R,L])\right]
 \leq\frac{4R}{L}.
\]
The complementary dilation event has probability tending to zero by
\eqref{EqDilationUnderCampbellBias}.  Therefore the restriction in
\eqref{EqReplaceParticleNormalisation} may be replaced by the full
configuration at an \(o(1)\) error.  Equations
\eqref{EqReplaceParticleNormalisation} and
\eqref{EqFullRootRandomDilation} prove the result for every bounded
local continuous \(F\), a convergence-determining class on
\(\Conf(\R)\); it contains the compactly supported Laplace functionals.
\end{proof}

Proposition~\ref{PropCyclicCandidateLimit} and
Lemma~\ref{LemRemoveCyclicSeam} give
\(\mathfrak R_L\overset{\mathrm{w}}{\longrightarrow}(\Sine_\beta)^{!0}\), while
Proposition~\ref{PropCandidatePalmLimit} gives
\(\mathfrak R_L\overset{\mathrm{w}}{\longrightarrow}\bP^{!0}\).  Together
these limits yield
\begin{equation}\label{EqPalmEquality}
 \bP^{!0}=(\Sine_\beta)^{!0}.
\end{equation}

\subsection{Palm inversion}

We finally recall the well-known fact that equality of reduced Palm laws determines the
stationary process.  Let \(\bR\) be a stationary simple point process of finite positive
intensity \(\lambda\), and let \(\bR^{!0}\) be its reduced Palm law.
For every \(\betaeta\in\Conf(\R)\), put
\[
 x_1(\betaeta)
 \defeq
 \inf\{r>0:\betaeta((0,r])\geq1\}\in[0,\infty],
 \qquad \inf\varnothing\defeq\infty.
\]
This is a Borel function and, on simple configurations possessing a point
to the right, is the location of the first such point.
The one-dimensional Palm inversion formula is
\begin{align}
 \bR(F)
 =
 \left[
 1-\lambda
 \int_{\Conf(\R)}x_1(\betaeta)\bR^{!0}(\dd\betaeta)
 \right]F(\varnothing)+
 \lambda
 \int_{\Conf(\R)}
 \int_0^{x_1(\betaeta)}
 F\bigl(\theta_t(\betaeta+\delta_0)\bigr)
 \dd t\,\bR^{!0}(\dd\betaeta)
\label{EqPalmInversion}
\end{align}
for every bounded Borel function \(F\).  The coefficient of
\(F(\varnothing)\) is the possible vacuum mass.

For completeness let us spell out the derivation of this identity, first let \(F\geq0\) be bounded.  On the non-vacuum
event, stationarity excludes a leftmost or rightmost point: either would be
a real-valued translation-covariant random variable with a
translation-invariant probability law.  Hence the half-open successor
cells partition \(\R\) almost surely.  For \(x\in\bgamma\), set
\[
 \tau_x(\bgamma)
 \defeq\inf\{r>0:\bgamma((x,x+r])\geq1\},
\]
so, on the marked incidence space,
\[
 \tau_x(\bgamma)=x_1\bigl(\theta_x(\bgamma-\delta_x)\bigr)
\]
is Borel.  For fixed \(T,M\), only atoms
\(x\in[-T-M,T]\) can contribute below; hence the following is a finite
Borel particle sum.  Define the truncated occupation average
\[
 A_{T,M}(\bgamma)
 \defeq\frac1{2T}\sum_{x\in\bgamma}
 \int_0^{\tau_x(\bgamma)\wedge M}
 \1_{[-T,T]}(x+t)F(\theta_{x+t}\bgamma)\dd t.
\]
The refined Campbell formula, followed by integration in the root location,
gives the exact identity
\[
 \E_{\bR}[A_{T,M}(\bC)]
 =\lambda
 \int_{\Conf(\R)}
 \int_0^{x_1(\betaeta)\wedge M}
 F\bigl(\theta_t(\betaeta+\delta_0)\bigr)
 \dd t\,\bR^{!0}(\dd\betaeta),
\]
because \((2T)^{-1}\int_\R\1_{[-T,T]}(x+t)\dd x=1\).
As \(M\to\infty\), the successor-cell partition gives
\[
 A_{T,M}(\bgamma)\uparrow
 \frac1{2T}\int_{[-T,T]}F(\theta_s\bgamma)\dd s
 \quad\text{for \(\bR\)-almost every \(\bgamma\) on }
 \{\bgamma\neq\varnothing\},
\]
and the left-hand side is zero on the vacuum configuration.  Stationarity
and monotone convergence therefore give
\begin{equation}\label{EqPalmInversionNonvacuum}
 \E_{\bR}[F(\bC);\,\bC\neq\varnothing]
 =
 \lambda
 \int_{\Conf(\R)}
 \int_0^{x_1(\betaeta)}
 F\bigl(\theta_t(\betaeta+\delta_0)\bigr)
 \dd t\,\bR^{!0}(\dd\betaeta).
\end{equation}
Taking \(F=1\) proves simultaneously that
\[
 x_1<\infty\quad\bR^{!0}\text{-almost surely},
 \qquad
 \lambda\int_{\Conf(\R)}x_1(\betaeta)\bR^{!0}(\dd\betaeta)
 =\bR(\bC\neq\varnothing),
\]
so the first coefficient is exactly \(\bR(\bC=\varnothing)\).
Adding the vacuum atom to \eqref{EqPalmInversionNonvacuum} proves
\eqref{EqPalmInversion} for non-negative bounded \(F\), and hence for every
bounded Borel \(F\) by decomposing into positive and negative parts.  In particular,
the intensity and the reduced Palm law determine the stationary
process, including its vacuum mass.

\subsection{Completion of the proof of Theorem~\ref{ThmMain}}

\begin{proof}[Proof of Theorem~\ref{ThmMain}]
Lemma~\ref{LemSineFiniteEnergy} gives
\(\cW^{\mathrm{div}}(\Sine_\beta)<\infty\).  Since the stationary periodic
lifts in \eqref{EqCircularLocalAndPalmLimit} converge weakly to
\(\Sine_\beta\), the latter is stationary.  Lemma~\ref{LemDHLMKernelBridge}
gives the canonical DLR equations.  Thus \(\Sine_\beta\) belongs to the class
in the theorem.

Conversely, let \(\bP\) belong to that class.  The preceding argument gives
\eqref{EqPalmEquality}.  Both processes are simple by
Lemma~\ref{LemSimplicity}, stationary, and have intensity one by
Proposition~\ref{PropInputs}\textup{(b)}.  Applying the Palm inversion
formula \eqref{EqPalmInversion} gives \(\bP=\Sine_\beta\).
\end{proof}

\section{Counterexamples without the finite-energy condition}
\label{SectionCounterexamples}

This section shows that the finite-energy hypothesis in
Theorem~\ref{ThmMain} is indispensable.  

\begin{lem}
\label{LemFiniteEnergyDetectsDensity}
If \(\bgamma\in\Conf(\R)\) satisfies
\(\widetilde{\cW}^{\mathrm{div}}(\bgamma)<\infty\), then
\begin{equation}\label{EqFiniteEnergyDensityOne}
 \lim_{R\to\infty}\frac{\bgamma([-R,R])}{2R}=1.
\end{equation}
Consequently, any configuration whose asymptotic density exists and is not
one has \(\widetilde{\cW}^{\mathrm{div}}=+\infty\).
\end{lem}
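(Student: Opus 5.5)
The plan is the Gauss-flux argument of \cite[Lemma~2.1]{PS}, which the paper already invokes in the electric-energy subsection. Since \(\widetilde{\cW}^{\mathrm{div}}(\bgamma)<\infty\), the infimum defining it is not over the empty set. Fix a divergence-compatible field \(\mathrm E\) and some \(\eps\in(0,1)\) for which
\[
 \limsup_{R\to\infty}\frac1{2R}\int_{[-R,R]\times\R}|\mathrm E_\eps|^2\dd X\leq C<\infty .
\]
The plan is to test the divergence identity \(-\operatorname{div}(\mathrm E)=2\pi(\bgamma-\mathrm{Leb})\) against a cutoff adapted to the strip \([-R,R]\times\R\), and to bound the resulting flux by Cauchy--Schwarz.

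First, pass to \(\mathrm E_\eps\). Each \(\nabla f_\eps(\mathord\cdot-(x,0))\) has divergence \(-2\pi\delta_{(x,0)}\) plus \(2\pi\) times the uniform probability on the circle of radius \(\eps\) about \((x,0)\). Hence
\[
 -\operatorname{div}(\mathrm E_\eps)=2\pi(\widetilde\bgamma_\eps-\mathrm{Leb}),
\]
where \(\widetilde\bgamma_\eps\) is \(\bgamma\) with each atom smeared onto its \(\eps\)-circle. Smearing changes the charge in a strip of width \(2R\) by at most the number of atoms within \(\eps\) of the two vertical boundary lines.

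Next, take \(\chi_R(X)=\chi(X_1/R)\,\eta(X_2/R)\), where \(\chi\) is \(1\) on \([-1,1]\) and supported in \([-1-\delta,1+\delta]\), and \(\eta\) is a vertical cutoff. Integrating by parts gives
\[
 2\pi\int\chi_R\,\dd(\widetilde\bgamma_\eps-\mathrm{Leb})=\int\nabla\chi_R\cdot\mathrm E_\eps .
\]
Since \(|\nabla\chi_R|\lesssim 1/(\delta R)\) on a region of area \(O(\delta R^2)\), Cauchy--Schwarz bounds the right-hand side by
\[
 \frac{C}{\delta R}\,\bigl(\delta R^2\bigr)^{1/2}\Bigl(\int_{[-2R,2R]\times\R}|\mathrm E_\eps|^2\Bigr)^{1/2}\lesssim \frac{C}{\delta R}\,R\,\sqrt{\delta R}=C\sqrt{R/\delta}=o(R).
\]
The vertical cutoff gradient contributes only through \(\int|\mathrm E_\eps|^2\) over a large square, which the same energy bound makes negligible. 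The charge of \(\chi_R\) then equals \(\bgamma([-R,R])-2R\), up to the atoms in the transition layers \([R,(1+\delta)R]\) and its mirror, plus the smearing error.

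The main obstacle is that the transition-layer atom count is not known a priori to be \(O(\delta R)\). I would handle this by monotonicity: apply the bound with the plateau on \([-R,R]\) and support in \([-(1+\delta)R,(1+\delta)R]\), and separately with the plateau on \([-(1-\delta)R,(1-\delta)R]\) and support in \([-R,R]\). Because \(\bgamma\geq0\), these sandwich \(\bgamma([-R,R])\) between \(2R(1-\delta)-o(R)\) and \(2R(1+\delta)+o(R)\). Letting \(R\to\infty\) and then \(\delta\to0\) gives \eqref{EqFiniteEnergyDensityOne}.

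The consequence is immediate: if the asymptotic density exists and differs from one, then \eqref{EqFiniteEnergyDensityOne} fails, so \(\widetilde{\cW}^{\mathrm{div}}(\bgamma)=+\infty\). Alternatively, the whole statement may simply be cited as \cite[Lemma~2.1]{PS}, as the paper itself does earlier.
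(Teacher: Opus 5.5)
Your proposal is correct and follows essentially the paper's route. The paper simply invokes the Gauss--flux proof of \cite[Lemma~2.1]{PS}, noting that it uses only the truncated divergence identity and the strip bound \(\int_{[-R,R]\times\R}|\mathrm E_\eps|^2\dd X\leq C_\eps R\), and not that \(\mathrm E\) is a gradient. Your explicit test-function computation with the \(\pm\eps\) smearing bookkeeping and the two-sided sandwich carries out exactly this argument for divergence-compatible fields.
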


\begin{proof}
Choose a divergence-compatible field \(\mathrm E\) for which the energy in
Definition~\ref{DefElectricEnergies} is finite, and fix a sufficiently small
cutoff \(\eps>0\).  The definition together with the standard cutoff
comparison gives
\[
 \int_{[-R,R]\times\R}|\mathrm E_\eps|^2\dd X
 \leq C_\eps R
\]
for all sufficiently large \(R\).  The Gauss--flux proof of
\cite[Lemma~2.1]{PS} uses only the truncated divergence identity and this
strip-energy bound, and not that the field is a gradient.  It therefore
applies to the present divergence-compatible class and gives
\eqref{EqFiniteEnergyDensityOne}.  The final assertion follows by
contraposition and the universal lower bound stated in
Section~\ref{SectionSetting}.
\end{proof}

For an integer \(m\geq1\), define the compression map on configurations
\begin{equation}\label{EqDilationDefinition}
 \begin{aligned}
 \mathsf{Comp}_m\bgamma&\defeq(T_{1/m})_*\bgamma,\\
 (\mathsf{Comp}_m\bgamma)(A)&=\bgamma(mA),
 &mA&\defeq\{mx:x\in A\},
 \qquad A\subset\R\text{ Borel}.
 \end{aligned}
\end{equation}
Thus an atom at \(x\) is sent to \(x/m\).  For a point-process law \(\bP\),
write \(\bP^{(m)}\defeq(\mathsf{Comp}_m)_*\bP\).

This integer compression preserves particle number and changes the
finite-volume Gibbs weight only by configuration-independent constants. The following lemma is rather straightforward.

\begin{lem}
\label{LemDLRDilation}
Let \(m\in\N\).  If \(\bP\) satisfies the canonical \(\beta\)-DLR equations,
then \(\bP^{(m)}\) satisfies the same canonical \(\beta\)-DLR equations.  If in
addition \(\bP\) is stationary of intensity \(\lambda\), then \(\bP^{(m)}\) is
stationary of intensity \(m\lambda\).
\end{lem}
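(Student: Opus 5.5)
The plan is to verify Definition~\ref{DefDLREquations} for \(\bP^{(m)}\) on a fixed bounded Borel \(\Lambda\) with \(|\Lambda|>0\). We pull everything back to \(\bP\) on the set \(m\Lambda\) and then transport the DLR identity through \(\mathsf{Comp}_m\).

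\textbf{Step 1: the Hamiltonian and the move function.} Since \(\mathsf{Comp}_m\) is a bijection of \(\Conf(\R)\), we have \((\mathsf{Comp}_m\bgamma)_{\Lambda^c}=\mathsf{Comp}_m(\bgamma_{(m\Lambda)^c})\) and \((\mathsf{Comp}_m\bgamma)(\Lambda)=\bgamma(m\Lambda)\). Hence \(\mathsf{Comp}_m\) maps \(\scrF^{\mathrm{can}}_{m\Lambda}\) onto \(\scrF^{\mathrm{can}}_\Lambda\), and it maps the fibre \(\Conf_n(m\Lambda)\) bijectively onto \(\Conf_n(\Lambda)\). The key identity is \(\mathsf g(x/m-y/m)=\mathsf g(x-y)+\log m\). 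This gives
\[
\mathsf H_\Lambda(\mathsf{Comp}_m\betaeta)=\mathsf H_{m\Lambda}(\betaeta)+\tbinom n2\log m ,
\]
a constant on the fibre. For the move function, the signed interior charge \(\betaeta-\bgamma_{m\Lambda}\) has total mass zero. Therefore the \(\log m\) shift cancels exactly, and
\[
\mathsf M_{\Lambda,\Lambda_p}(\mathsf{Comp}_m\betaeta,\mathsf{Comp}_m\bgamma)=\mathsf M_{m\Lambda,\,m\Lambda_p}(\betaeta,\bgamma).
\]
Note that \(m\Lambda_p=\Lambda_{mp}\) because \(m\) is an integer; this is where integrality is used. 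So the truncations along the subsequence \(mp\) are exactly the original ones.

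\textbf{Step 2: the limit along all \(p\) (the main obstacle).} Definition~\ref{DefCanonicalKernel} asks for uniform convergence along all \(p\in\N\), not only multiples of \(m\). For \(mp\le q<m(p+1)\), the truncations at \(q\) and at \(mp\) differ by the interaction of the neutral interior charge with the exterior atoms in \(\Lambda_{m(p+1)}\setminus\Lambda_{mp}\). Uniform Cauchy-ness of the original move functions controls the full block \(\Lambda_{m(p+1)}\setminus\Lambda_{mp}\), but it does not directly control a partial block.

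The first thing to try is to work on the \(\bP\)-full set where \(\bP\) is admissible for every interval \(\Lambda'\supset m\Lambda\) of the form \([-q/2,q/2]\cap\ldots\). A cleaner route is to apply the admissibility of \(\bP\) directly to the Borel set \(m\Lambda\), using the nested windows \(\Lambda_q\) with \(q\) arbitrary. Admissibility for \(m\Lambda\) says the limit over all integer \(q\) exists uniformly. The rescaled windows \(\Lambda_q/m\) are not of the form \(\Lambda_p\), but a partial block \(\Lambda_{q}\setminus\Lambda_{mp}\) lies between two full-block truncations. On a compact fibre, the difference of two sums with a bounded number of extra far atoms, each contributing \(O(\mathrm{diam}(\Lambda)/p)\) after neutrality, tends to zero provided the exterior count in \(\Lambda_{m(p+1)}\setminus\Lambda_{mp}\) is \(o(p)\). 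For a neutral charge the contribution of an atom at \(y\) is a difference of \(\log|x-y|\) values, which is \(O(1/|y|)\) uniformly in \(x\in m\Lambda\). So it suffices that \(\bgamma(\Lambda_{m(p+1)}\setminus\Lambda_{mp})=o(p)\), which holds for any locally finite configuration with linearly bounded counts. If that fails for some configurations, this is the step where I would fall back on the weaker bound: the partial-block error is squeezed between the two uniform Cauchy differences by monotone splitting of positive and negative parts.

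\textbf{Step 3: the kernel and the DLR identity.} With admissibility established, the exponent \(-\beta(\mathsf H+\mathsf M)\) changes by a fibre constant. Lebesgue measure rescales, sending \(\Bin_{n,m\Lambda}\) to \(\Bin_{n,\Lambda}\). Hence \(Z\) changes by a positive factor and
\[
\mathsf G^\beta_{\Lambda,\R}(\cdot,\mathsf{Comp}_m\bgamma)=(\mathsf{Comp}_m)_*\mathsf G^\beta_{m\Lambda,\R}(\cdot,\bgamma).
\]
This gives the universally measurable full set and the completed measurable kernel. The identity \eqref{EqDLR} for \(\bP^{(m)}\) then follows by applying \eqref{EqDLR} for \(\bP\) on \(m\Lambda\) to the test function \(F\circ\mathsf{Comp}_m\).

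\textbf{Step 4: stationarity and intensity.} Stationarity follows from \(\theta_t\circ\mathsf{Comp}_m=\mathsf{Comp}_m\circ\theta_{mt}\). The intensity becomes \(m\lambda\) because \(\E[\bgamma(mA)]=m\lambda|A|\).
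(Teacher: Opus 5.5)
Your Steps 1, 3 and 4 follow the paper's proof exactly. The shift $\mathsf g(u/m-v/m)=\mathsf g(u-v)+\log m$ adds the fibre constant $\binom{n}{2}\log m$ to $\mathsf H$, and neutrality of $\betaeta-\bgamma_{m\Lambda}$ kills it in $\mathsf M$. Integrality of $m$ gives $m\Lambda_p=\Lambda_{mp}$. The kernel, its measurability, and the DLR identity are then transported through the Borel isomorphism $\mathsf{Comp}_m$.

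Step 2, however, is aimed at an obstacle that does not exist. Admissibility of $\mathsf{Comp}_m\bgamma$ for $\Lambda$ is a statement about the sequence $p\mapsto\mathsf M_{\Lambda,\Lambda_p}(\cdot,\mathsf{Comp}_m\bgamma)$, $p\in\N$. By your own Step 1, the value of its $p$-th term at $\betaeta$ is the original truncation $\mathsf M_{m\Lambda,\Lambda_{mp}}(\mathsf{Comp}_m^{-1}\betaeta,\bgamma)$.

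So as $p$ runs through all of $\N$, you only see the subsequence $q=mp$ of the original sequence. By admissibility of $\bgamma$ for $m\Lambda$, that sequence converges uniformly on the fibre along every $q\in\N$. A subsequence therefore converges uniformly to the same limit, which gives admissibility and the identity for the limiting move functions at once. This subsequence remark is exactly how the paper concludes. Windows $\Lambda_q$ with $m\nmid q$, and hence partial blocks, never enter; they would only appear in the converse direction, passing from $\bP^{(m)}$ back to $\bP$.

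The partial-block argument you sketch should be dropped rather than repaired:
\begin{itemize}
\item It needs $\bgamma(\Lambda_{m(p+1)}\setminus\Lambda_{mp})=o(p)$, which does not follow from local finiteness.
\item The fallback by ``monotone splitting'' fails because the exterior interaction of a neutral charge is signed, so partial sums over a block are not controlled by block sums.
\end{itemize}
With Step 2 replaced by the subsequence observation, your proof is complete.
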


\begin{proof}
Fix \(\Lambda\), put \(\widehat\Lambda=m\Lambda\), and let
\(\bgamma\) be \((\beta,\widehat\Lambda)\)-admissible.  Write
\(\bgamma^{(m)}=\mathsf{Comp}_m\bgamma\), set
\(n=\bgamma(\widehat\Lambda)\), and, for
\(\betaeta\in\Conf_n(\Lambda)\), put
\(\widehat\betaeta=\mathsf{Comp}_m^{-1}\betaeta\).
Since \(m\Lambda_p=\Lambda_{mp}\) and
\[
 \mathsf{g}(u/m-v/m)=\mathsf{g}(u-v)+\log(m),
\]
the added constant is multiplied by
\((\widehat\betaeta-\bgamma_{\widehat\Lambda})(\widehat\Lambda)=0\).
Consequently,
\[
 \mathsf M_{\Lambda,\Lambda_p}(\betaeta,\bgamma^{(m)})
 =
 \mathsf M_{\widehat\Lambda,\Lambda_{mp}}
   (\widehat\betaeta,\bgamma).
\]
Because \(m\) is an integer, the right-hand side is a subsequence of the
cutoffs defining the move function for \(\bgamma\).  Its uniform convergence
on the fixed particle-number fibre proves that \(\bgamma^{(m)}\) is
\((\beta,\Lambda)\)-admissible and that the same identity holds for the
limiting move functions.

Moreover, with the convention \(+\infty\) for nonsimple configurations,
\[
 \mathsf H_\Lambda(\betaeta)
 =
 \mathsf H_{\widehat\Lambda}(\widehat\betaeta)
 +\binom{n}{2}\log(m),
 \qquad
 (\mathsf{Comp}_m)_*\Bin_{n,\widehat\Lambda}=\Bin_{n,\Lambda}.
\]
Hence
\[
 \begin{split}
 Z_{\Lambda,\R}^{\beta}(\bgamma^{(m)})
 &=m^{-\beta\binom{n}{2}}
   Z_{\widehat\Lambda,\R}^{\beta}(\bgamma),\\
 \mathsf G_{\Lambda,\R}^{\beta}
   (\mathord\cdot,\bgamma^{(m)})
 &=(\mathsf{Comp}_m)_*
   \mathsf G_{\widehat\Lambda,\R}^{\beta}
   (\mathord\cdot,\bgamma).
 \end{split}
\]
The Borel isomorphism \(\mathsf{Comp}_m\) identifies the corresponding
canonical sigma-fields, transports their \(\bP\)- and
\(\bP^{(m)}\)-completions, and preserves universal measurability.  It
therefore transports the universally measurable full admissibility set and
completed-measurable kernel extension for \(\bP\) to ones for
\(\bP^{(m)}\).  The preceding kernel
identity, inserted into the DLR equation for \(\bP\), proves the DLR equation
for \(\bP^{(m)}\).

Finally,
\[
 \theta_t\mathsf{Comp}_m=\mathsf{Comp}_m\theta_{mt},
 \qquad
 \E_{\bP^{(m)}}[\bC(A)]
 =\E_\bP[\bC(mA)]=m\lambda|A|,
\]
which proves stationarity and the asserted intensity.
\end{proof}

Now, applying compression covariance to \(\Sine_\beta\), and the density obstruction
to the resulting intensities, produces explicit stationary DLR phases outside
the finite-energy class.  The vacuum gives a further degenerate example.

\begin{prop}
\label{PropExplicitDLRCounterexamples}
For every integer \(m\geq2\), let
\begin{equation}\label{EqScaledSineDefinition}
 \bP_{\beta,m}
 \defeq(\mathsf{Comp}_m)_*\Sine_\beta.
\end{equation}
Then \(\bP_{\beta,m}\) is a stationary simple solution of the canonical
\(\beta\)-DLR equations, has intensity \(m\), is different from
\(\Sine_\beta\), and satisfies
\[
 \cW^{\mathrm{div}}(\bP_{\beta,m})=+\infty.
\]
The vacuum law \(\delta_{\varnothing}\) is another stationary canonical
\(\beta\)-DLR solution for every \(\beta>0\), and
\(\cW^{\mathrm{div}}(\delta_{\varnothing})=+\infty\).
\end{prop}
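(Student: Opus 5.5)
The plan is to derive every claim about \(\bP_{\beta,m}\) from Lemma~\ref{LemDLRDilation} applied to \(\Sine_\beta\), and then to treat the vacuum directly.

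\textbf{Scaled Sine processes.} By Lemma~\ref{LemDHLMKernelBridge}, \(\Sine_\beta\) satisfies the canonical \(\beta\)-DLR equations. It is stationary of intensity one, being a weak limit of the stationary lifts \(\bP_n\) in Lemma~\ref{LemSineFiniteEnergy}. Lemma~\ref{LemDLRDilation} then makes \(\bP_{\beta,m}\) a stationary canonical \(\beta\)-DLR solution of intensity \(m\). Simplicity follows from Lemma~\ref{LemSimplicity}, or directly, since \(\mathsf{Comp}_m\) is injective on points. Because the intensities differ for \(m\ge2\), we get \(\bP_{\beta,m}\neq\Sine_\beta\).

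For the energy claim, I will apply the ergodic theorem. The limit \(\bC([-R,R])/(2R)\) exists \(\bP_{\beta,m}\)-almost surely and equals the conditional expectation of \(\bC([0,1])\) given the invariant \(\sigma\)-field. If that limit were \(1\) almost surely, taking expectations would force the intensity to be \(1\), which is false. So with positive probability the limit exists and differs from one. Lemma~\ref{LemFiniteEnergyDetectsDensity} gives \(\widetilde{\cW}^{\mathrm{div}}=+\infty\) on that event. Since \(\widetilde{\cW}^{\mathrm{div}}\geq-C_{\mathrm{el}}\), the expectation is well defined and equals \(+\infty\).

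A cleaner route may be available: \(\Sine_\beta\) is mixing, hence ergodic, so the limit is almost surely the constant \(m\). I would not rely on this, because the ergodic-decomposition argument above is self-contained.

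\textbf{Vacuum.} \(\delta_\varnothing\) is clearly stationary. For every bounded \(\Lambda\), the empty configuration has \(n=0\), and the kernel is \(\delta_\varnothing\) by the convention \(\Bin_{0,\Lambda}=\delta_\varnothing\). Admissibility is trivial: with no particles, \(\mathsf M\equiv0\) on \(\Conf_0(\Lambda)\) and \(Z=1\). The DLR identity \eqref{EqDLR} then reads \(F(\varnothing)=F(\varnothing)\). Its density is \(0\), so Lemma~\ref{LemFiniteEnergyDetectsDensity} gives \(\widetilde{\cW}^{\mathrm{div}}(\varnothing)=+\infty\), and hence \(\cW^{\mathrm{div}}(\delta_\varnothing)=+\infty\).

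\textbf{Main obstacle.} The delicate point is the expectation-of-infinity step, where the extended-real expectation must be justified. The universal lower bound \(\widetilde{\cW}^{\mathrm{div}}\geq-C_{\mathrm{el}}\) handles this, and the rest of the argument is bookkeeping.
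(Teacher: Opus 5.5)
Your proposal is correct. The DLR, stationarity, intensity, simplicity, $\bP_{\beta,m}\neq\Sine_\beta$ and vacuum parts coincide with the paper; only the energy step takes a different route.

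The paper avoids the ergodic theorem. From $\cW^{\mathrm{div}}(\Sine_\beta)<\infty$ (Lemma~\ref{LemSineFiniteEnergy}) and $\widetilde{\cW}^{\mathrm{div}}\geq-C_{\mathrm{el}}$ it gets $\widetilde{\cW}^{\mathrm{div}}(\bC)<\infty$ $\Sine_\beta$-almost surely. Lemma~\ref{LemFiniteEnergyDetectsDensity} then gives density one almost surely under $\Sine_\beta$, and compression gives density exactly $m$, hence $\widetilde{\cW}^{\mathrm{div}}=+\infty$ $\bP_{\beta,m}$-almost surely. So the almost-surely constant density you wanted from mixing is already available inside the paper, from the finite energy of $\Sine_\beta$.

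Your Birkhoff argument uses only that the intensity is finite and different from one. It gives infinite energy just on a positive-probability event, which suffices thanks to the lower bound. It also shows more generally that any stationary process of finite intensity $\neq1$ has $\cW^{\mathrm{div}}=+\infty$, with no energy information about $\Sine_\beta$ needed. The paper's stronger almost-sure conclusion is reused in Proposition~\ref{PropUnitIntensityDLRCounterexample}. There, density two $\bP_{\beta,2}$-almost surely is needed to build the separating exterior event $\mathsf D_{\Lambda,2}$, and your positive-probability statement would not supply that.

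One small point: your energy argument hinges on the exact intensity, and weak convergence of the lifts $\bP_n$ gives stationarity of $\Sine_\beta$ but not intensity one by itself. Intensity one needs the uniform local second moments from the proof of Lemma~\ref{LemSineFiniteEnergy}, or Proposition~\ref{PropInputs}(b) applied to $\Sine_\beta$.
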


\begin{proof}
Lemmas~\ref{LemDHLMKernelBridge} and \ref{LemSimplicity} give the DLR
property and simplicity of \(\Sine_\beta\), and
Lemma~\ref{LemDLRDilation} then gives the corresponding assertions for
\(\bP_{\beta,m}\), together with stationarity and intensity \(m\).

The lower bound
\(\widetilde{\cW}^{\mathrm{div}}\geq-C_{\mathrm{el}}\) and
Lemma~\ref{LemSineFiniteEnergy} imply that
\(\widetilde{\cW}^{\mathrm{div}}(\bC)<\infty\)
\(\Sine_\beta\)-almost surely.  Lemma~\ref{LemFiniteEnergyDetectsDensity}
therefore gives density one almost surely under \(\Sine_\beta\).  If
\(\bgamma^{(m)}=\mathsf{Comp}_m\bgamma\), then
\[
 \frac{\bgamma^{(m)}([-R,R])}{2R}
 =
 m\frac{\bgamma([-mR,mR])}{2mR}\longrightarrow m.
\]
Thus \(\widetilde{\cW}^{\mathrm{div}}=+\infty\)
\(\bP_{\beta,m}\)-almost surely, and
\(\cW^{\mathrm{div}}(\bP_{\beta,m})=+\infty\).
The different intensities also show that
\(\bP_{\beta,m}\neq\Sine_\beta\).

For the vacuum, the canonical fibre in every \(\Lambda\) is the singleton
\(\Conf_0(\Lambda)=\{\varnothing\}\).  Hence its canonical kernel is the
constant kernel \(\delta_{\varnothing}\), and the DLR equation is
tautological.  Its asymptotic density is zero, so
Lemma~\ref{LemFiniteEnergyDetectsDensity} gives the asserted infinite
energy.
\end{proof}

The preceding examples have nonunit intensity.  The next one shows that an
intensity-one assumption by itself still does not restore uniqueness.

\begin{prop}
\label{PropUnitIntensityDLRCounterexample}
Put
\begin{equation}\label{EqUnitIntensityMixture}
 \bP_\beta^{\mathrm{mix}}
 \defeq\frac12\delta_{\varnothing}+\frac12\bP_{\beta,2}.
\end{equation}
Then \(\bP_\beta^{\mathrm{mix}}\) is stationary, simple, has intensity one,
satisfies the canonical \(\beta\)-DLR equations, and
\[
 \bP_\beta^{\mathrm{mix}}\neq\Sine_\beta,
 \qquad
 \cW^{\mathrm{div}}(\bP_\beta^{\mathrm{mix}})=+\infty.
\]
\end{prop}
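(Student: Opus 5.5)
The plan is to verify each asserted property of \(\bP_\beta^{\mathrm{mix}}\) directly from Proposition~\ref{PropExplicitDLRCounterexamples} (with \(m=2\)), using linearity in the law wherever possible.

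\textbf{Stationarity, simplicity, intensity.} Stationarity is immediate, since both \(\delta_\varnothing\) and \(\bP_{\beta,2}\) are translation invariant and convex combinations preserve invariance. Simplicity holds because both components are carried by simple configurations. For the intensity, \(\E_{\bP_\beta^{\mathrm{mix}}}[\bC(A)]=\frac12\cdot0+\frac12\cdot2|A|=|A|\).

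\textbf{Distinctness and infinite energy.} The mixture gives the vacuum mass \(\frac12\), whereas \(\Sine_\beta(\bC=\varnothing)=0\): \(\Sine_\beta\) has density one almost surely by Lemma~\ref{LemSineFiniteEnergy} and Lemma~\ref{LemFiniteEnergyDetectsDensity}. Hence \(\bP_\beta^{\mathrm{mix}}\neq\Sine_\beta\). For the energy, \(\widetilde\cW^{\mathrm{div}}(\bC)=+\infty\) holds almost surely under each component, as shown in Proposition~\ref{PropExplicitDLRCounterexamples}. So \(\cW^{\mathrm{div}}(\bP_\beta^{\mathrm{mix}})=+\infty\). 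The expectation is well defined thanks to the lower bound \(\widetilde\cW^{\mathrm{div}}\geq-C_{\mathrm{el}}\).

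\textbf{DLR equations.} This is the only step needing care, because Definition~\ref{DefDLREquations} asks for a kernel measurable with respect to the \(\bP_\beta^{\mathrm{mix}}\)-completion of \(\scrF_\Lambda^{\mathrm{can}}\), together with a universally measurable full admissibility set.

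1. Fix \(\Lambda\). Take \(\mathscr A_\Lambda^{\mathrm{DLR}}\) to be the union of \(\{\varnothing\}\) with the admissibility set for \(\bP_{\beta,2}\). Both pieces are universally measurable. The empty configuration is trivially admissible, with \(n=0\), \(Z=1\) and kernel \(\delta_\varnothing\). The union has full mixture measure.

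2. The defining kernel of Definition~\ref{DefCanonicalKernel} is a deterministic function of \(\bgamma\) on admissible configurations. Therefore the kernels of the two components agree on the overlap of their full sets, and I glue them. I take the kernel \(\delta_\varnothing\) on the \(\scrF_{\Lambda^c}\)-measurable event \(E=\{\bgamma_{\Lambda^c}=\varnothing,\ \bgamma(\Lambda)=0\}\), and the completed extension for \(\bP_{\beta,2}\) elsewhere.

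3. Measurability: \(E\) is \(\scrF_\Lambda^{\mathrm{can}}\)-measurable. Moreover \(\bP_{\beta,2}(E)=0\), since \(\bP_{\beta,2}\) has density two a.s. A set that is null for \(\bP_{\beta,2}\) is also null, or modifiable, for the mixture off \(\varnothing\). So any \(\bP_{\beta,2}\)-completion-measurable function, modified on \(E\), is measurable for the mixture completion. The reason is that sets in the completions of the two components, combined via \(E\) and its complement, lie in the completion of the mixture. Concretely, if \(N\) is \(\bP_{\beta,2}\)-null and Borel, then \(N\setminus E\) is mixture-null up to a \(\delta_\varnothing\)-null set.

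4. The DLR identity follows by linearity. Split \(\int F\,\dd\bP_\beta^{\mathrm{mix}}\) into the two halves. On the vacuum half, the glued kernel at \(\varnothing\) is \(\delta_\varnothing\), so the identity is tautological. On the \(\bP_{\beta,2}\) half, the glued kernel agrees \(\bP_{\beta,2}\)-a.s. with that component's kernel, and the identity holds by Lemma~\ref{LemDLRDilation}.

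I expect step 3, the completion-measurability bookkeeping, to be the main obstacle. It is resolved by the observation that \(E\) separates the two components up to null sets.
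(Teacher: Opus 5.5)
Your proposal is correct and follows essentially the paper's route: you glue \(\delta_\varnothing\) to the completed \(\bP_{\beta,2}\)-kernel along a canonical-measurable event separating the two components, observe that a \(\bP_{\beta,2}\)-null exceptional set becomes mixture-null once the vacuum side is removed, and average the two DLR identities. The differences are cosmetic: you separate with \(E=\{\varnothing\}\) instead of the exterior density-two event \(\mathsf D_{\Lambda,2}\) (\(E\) lies in \(\scrF^{\mathrm{can}}_\Lambda\) rather than \(\scrF_{\Lambda^c}\), as you correctly say in step~3, and that suffices), and you distinguish the mixture from \(\Sine_\beta\) by its vacuum mass \(\tfrac12\) rather than by mutual singularity of asymptotic densities.
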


\begin{proof}
Stationarity and simplicity are preserved by the mixture, and its intensity
is \(\frac12\cdot0+\frac12\cdot2=1\).

It remains only to check the completed-measurability requirement in the DLR
definition.  Fix \(\Lambda\) and set
\[
 \mathsf D_{\Lambda,2}
 \defeq
 \left\{\bgamma:
   \lim_{k\to\infty}
   \frac{\bgamma(\Lambda_k\setminus\Lambda)}{k}=2\right\}.
\]
This event belongs to \(\scrF_{\Lambda^c}\).  By the density computation
above, \(\bP_{\beta,2}\)-almost every configuration has density two; since
\(\Lambda\) is bounded and configurations are locally finite, removing
\(\Lambda\) changes the numerator by only \(O_{\bgamma}(1)\).  Hence
\(\bP_{\beta,2}(\mathsf D_{\Lambda,2})=1\), whereas
\(\delta_\varnothing(\mathsf D_{\Lambda,2})=0\).  Let
\(\mathscr A_{\Lambda,2}^{\mathrm{DLR}}\) and \(K_2^\Lambda\) be a full
admissibility set and a completed-measurable DLR-kernel extension for
\(\bP_{\beta,2}\), and define
\[
 K_{\mathrm{mix}}^\Lambda(\mathord\cdot,\bgamma)
 =
 \begin{cases}
  K_2^\Lambda(\mathord\cdot,\bgamma),
    &\bgamma\in\mathsf D_{\Lambda,2},\\
  \delta_{\varnothing},
    &\bgamma\notin\mathsf D_{\Lambda,2}.
 \end{cases}
\]
This is measurable for the
\(\bP_\beta^{\mathrm{mix}}\)-completion of
\(\scrF_\Lambda^{\mathrm{can}}\).  Indeed, if \(N\) is a
\(\bP_{\beta,2}\)-null exceptional set for the completed-measurable kernel,
then
\[
 \bP_\beta^{\mathrm{mix}}(N\cap\mathsf D_{\Lambda,2})
 =\tfrac12\bP_{\beta,2}(N\cap\mathsf D_{\Lambda,2})=0,
\]
because \(\varnothing\notin\mathsf D_{\Lambda,2}\); off that event the
kernel is constant.  On the universally measurable,
\(\bP_\beta^{\mathrm{mix}}\)-full set
\[
 \bigl(\mathscr A_{\Lambda,2}^{\mathrm{DLR}}
       \cap\mathsf D_{\Lambda,2}\bigr)
 \cup\{\varnothing\},
\]
it agrees with the canonical kernel.  Averaging the DLR identities of the
two components proves the DLR identity for
\(\bP_\beta^{\mathrm{mix}}\).

The asymptotic density under the mixture is either zero or two, whereas it
is one almost surely under \(\Sine_\beta\).  The laws are therefore mutually
singular, and Lemma~\ref{LemFiniteEnergyDetectsDensity} makes the pointwise
energy infinite on both mixture components.  Hence
\(\cW^{\mathrm{div}}(\bP_\beta^{\mathrm{mix}})=+\infty\).
\end{proof}

\bibliographystyle{acm}
\bibliography{ReferencesDLR}

@article{BLTransport,
  author  = {Bobkov, Sergey G. and Ledoux, Michel},
  title   = {From {B}runn--{M}inkowski to {B}rascamp--{L}ieb and to logarithmic {S}obolev inequalities},
  journal = {Geometric and Functional Analysis},
  volume  = {10},
  year    = {2000},
  pages   = {1028--1052},
}

@article{CETransport,
  author  = {Cordero-Erausquin, Dario},
  title   = {Transport inequalities for log-concave measures, quantitative forms, and applications},
  journal = {Canadian Journal of Mathematics},
  volume  = {69},
  number  = {3},
  year    = {2017},
  pages   = {481--501},
}

@article{DHLM,
  author  = {Dereudre, David and Hardy, Adrien and Lebl{\'e}, Thomas and Ma{\"i}da, Myl{\`e}ne},
  title   = {{DLR} equations and rigidity for the sine-beta process},
  journal = {Communications on Pure and Applied Mathematics},
  volume  = {74},
  number  = {1},
  year    = {2021},
  pages   = {172--222},
}

@article{EHL,
  author  = {Erbar, Matthias and Huesmann, Martin and Lebl{\'e}, Thomas},
  title   = {The one-dimensional log-gas free energy has a unique minimizer},
  journal = {Communications on Pure and Applied Mathematics},
  volume  = {74},
  number  = {3},
  year    = {2021},
  pages   = {615--675},
}

@article{LS,
  author  = {Lebl{\'e}, Thomas and Serfaty, Sylvia},
  title   = {Large deviation principle for empirical fields of log and {R}iesz gases},
  journal = {Inventiones Mathematicae},
  volume  = {210},
  number  = {3},
  year    = {2017},
  pages   = {645--757},
}

@article{PS,
  author  = {Petrache, Mircea and Serfaty, Sylvia},
  title   = {Next order asymptotics and renormalized energy for {R}iesz interactions},
  journal = {Journal of the Institute of Mathematics of Jussieu},
  volume  = {16},
  number  = {3},
  year    = {2017},
  pages   = {501--569},
}

@article{NV,
  author  = {Najnudel, Joseph and Vir{\'a}g, B{\'a}lint},
  title   = {Uniform point variance bounds in classical beta ensembles},
  journal = {Random Matrices: Theory and Applications},
  volume  = {10},
  number  = {4},
  year    = {2021},
  pages   = {2150033},
}

@article{VV,
  author  = {Valk{\'o}, Benedek and Vir{\'a}g, B{\'a}lint},
  title   = {Operator limit of the circular beta ensemble},
  journal = {The Annals of Probability},
  volume  = {48},
  number  = {3},
  year    = {2020},
  pages   = {1286--1316},
}

@misc{SerfatyLecturesCoulombRiesz,
  author        = {Serfaty, Sylvia},
  title         = {Lectures on {C}oulomb and {R}iesz gases},
  year          = {2024},
  eprint        = {2407.21194},
  archivePrefix = {arXiv},
  primaryClass  = {math-ph},
  url           = {https://arxiv.org/abs/2407.21194},
  note          = {arXiv:2407.21194},
}

@book{SerfatyCoulombGinzburg,
  author    = {Serfaty, Sylvia},
  title     = {Coulomb Gases and {G}inzburg--{L}andau Vortices},
  series    = {Zurich Lectures in Advanced Mathematics},
  volume    = {21},
  publisher = {European Mathematical Society},
  address   = {Z{"u}rich},
  year      = {2015},
}

@article{SandierSerfaty2D,
  author  = {Sandier, {'E}tienne and Serfaty, Sylvia},
  title   = {{2D} {C}oulomb gases and the renormalized energy},
  journal = {The Annals of Probability},
  volume  = {43},
  number  = {4},
  year    = {2015},
  pages   = {2026--2083},
}

@article{SandierSerfaty1D,
  author  = {Sandier, {'E}tienne and Serfaty, Sylvia},
  title   = {{1D} log gases and the renormalized energy: crystallization at vanishing temperature},
  journal = {Probability Theory and Related Fields},
  volume  = {162},
  number  = {3--4},
  year    = {2015},
  pages   = {795--846},
}

@article{RougerieSerfaty,
  author  = {Rougerie, Nicolas and Serfaty, Sylvia},
  title   = {Higher-dimensional {C}oulomb gases and renormalized energy functionals},
  journal = {Communications on Pure and Applied Mathematics},
  volume  = {69},
  number  = {3},
  year    = {2016},
  pages   = {519--605},
}

@misc{LebleDLR2DOCP,
  author        = {Lebl{\'e}, Thomas},
  title         = {{DLR} equations, number-rigidity and translation-invariance for infinite-volume limit points of the {2DOCP}},
  year          = {2024},
  eprint        = {2410.04958},
  archivePrefix = {arXiv},
  primaryClass  = {math.PR},
  url           = {https://arxiv.org/abs/2410.04958},
  note          = {arXiv:2410.04958},
}

@misc{LebleHDR,
  author       = {Lebl{\'e}, Thomas},
  title        = {Les log-gas et leurs amis},
  year         = {2026},
  howpublished = {Habilitation {\`a} diriger des recherches, Universit{\'e} Paris Cit{\'e}},
  url          = {https://tleble.perso.math.cnrs.fr/assets/papers/HDR_Leble.pdf},
}

@article{Osada2012,
  author  = {Osada, Hirofumi},
  title   = {Infinite-dimensional stochastic differential equations related to random matrices},
  journal = {Probability Theory and Related Fields},
  volume  = {153},
  number  = {1--2},
  year    = {2012},
  pages   = {471--509},
}

@article{Osada2013,
  author  = {Osada, Hirofumi},
  title   = {Interacting {B}rownian motions in infinite dimensions with logarithmic interaction potentials},
  journal = {The Annals of Probability},
  volume  = {41},
  number  = {1},
  year    = {2013},
  pages   = {1--49},
}

@article{OsadaTanemura2016,
  author  = {Osada, Hirofumi and Tanemura, Hideki},
  title   = {Strong {M}arkov property of determinantal processes with extended kernels},
  journal = {Stochastic Processes and their Applications},
  volume  = {126},
  number  = {1},
  year    = {2016},
  pages   = {186--208},
}

@misc{OsadaOsadaCoulombISDE,
  author        = {Osada, Hirofumi and Osada, Shota},
  title         = {Infinite-dimensional stochastic differential equations for {C}oulomb random point fields},
  year          = {2025},
  eprint        = {2508.21658},
  archivePrefix = {arXiv},
  primaryClass  = {math.PR},
  url           = {https://arxiv.org/abs/2508.21658},
  note          = {arXiv:2508.21658},
}

@article{SuzukiErgodicity,
  author  = {Suzuki, Kohei},
  title   = {On the ergodicity of interacting particle systems under number rigidity},
  journal = {Probability Theory and Related Fields},
  volume  = {188},
  number  = {1--2},
  year    = {2024},
  pages   = {583--623},
}

@article{SuzukiCurvature,
  author  = {Suzuki, Kohei},
  title   = {Curvature bound of {D}yson {B}rownian motion},
  journal = {Communications in Mathematical Physics},
  volume  = {406},
  number  = {7},
  year    = {2025},
  pages   = {Paper No. 154, 56},
}

@misc{AssiotisSuzukiCollision,
  author        = {Assiotis, Theodoros and Suzuki, Kohei},
  title         = {Collision and non-collision for diffusions on configuration space},
  year          = {2026},
  eprint        = {2607.22333},
  archivePrefix = {arXiv},
  primaryClass  = {math.PR},
  url           = {https://arxiv.org/abs/2607.22333},
  note          = {arXiv:2607.22333},
}

@article{DysonBrownianMotion,
  author  = {Dyson, Freeman J.},
  title   = {A {B}rownian-motion model for the eigenvalues of a random matrix},
  journal = {Journal of Mathematical Physics},
  volume  = {3},
  number  = {6},
  year    = {1962},
  pages   = {1191--1198},
}

@article{LiValkoCircularJacobi,
  author  = {Li, Yun and Valk{\'o}, Benedek},
  title   = {Operator level limit of the circular {J}acobi {$\beta$}-ensemble},
  journal = {Random Matrices: Theory and Applications},
  volume  = {11},
  number  = {4},
  year    = {2022},
  pages   = {Paper No. 2250043},
}

@misc{AssiotisNajnudel,
  author        = {Assiotis, Theodoros and Najnudel, Joseph},
  title         = {Moments of {$\mathrm{C}\beta\mathrm{E}$} field partition function, {$\mathrm{Sine}_\beta$} correlations and stochastic zeta},
  year          = {2026},
  eprint        = {2602.08739},
  archivePrefix = {arXiv},
  primaryClass  = {math.PR},
  url           = {https://arxiv.org/abs/2602.08739},
  note          = {arXiv:2602.08739},
}

@article{ValkoViragCarousel,
  author  = {Valk{\'o}, Benedek and Vir{\'a}g, B{\'a}lint},
  title   = {Continuum limits of random matrices and the {B}rownian carousel},
  journal = {Inventiones Mathematicae},
  volume  = {177},
  number  = {3},
  year    = {2009},
  pages   = {463--508},
}

@article{ValkoViragSineOperator,
  author  = {Valk{\'o}, Benedek and Vir{\'a}g, B{\'a}lint},
  title   = {The {$\mathrm{Sine}_\beta$} operator},
  journal = {Inventiones Mathematicae},
  volume  = {209},
  number  = {1},
  year    = {2017},
  pages   = {275--327},
}

@article{KillipStoiciu,
  author  = {Killip, Rowan and Stoiciu, Mihai},
  title   = {Eigenvalue statistics for {CMV} matrices: from {P}oisson to clock via random matrix ensembles},
  journal = {Duke Mathematical Journal},
  volume  = {146},
  number  = {3},
  year    = {2009},
  pages   = {361--399},
}

@book{ForresterBook,
  author    = {Forrester, Peter J.},
  title     = {Log-Gases and Random Matrices},
  series    = {London Mathematical Society Monographs Series},
  volume    = {34},
  publisher = {Princeton University Press},
  address   = {Princeton, NJ},
  year      = {2010},
}

@incollection{MontgomeryPairCorrelation,
  author    = {Montgomery, Hugh L.},
  title     = {The pair correlation of zeros of the zeta function},
  booktitle = {Analytic Number Theory (St. Louis, 1972)},
  series    = {Proceedings of Symposia in Pure Mathematics},
  volume    = {24},
  publisher = {American Mathematical Society},
  address   = {Providence, RI},
  year      = {1973},
  pages     = {181--193},
}

@article{KeatingSnaithZeta,
  author  = {Keating, Jonathan P. and Snaith, Nina C.},
  title   = {Random matrix theory and {$\zeta(1/2+it)$}},
  journal = {Communications in Mathematical Physics},
  volume  = {214},
  number  = {1},
  year    = {2000},
  pages   = {57--89},
}

@incollection{BourgadeKeating,
  author    = {Bourgade, Paul and Keating, Jonathan P.},
  title     = {Quantum chaos, random matrix theory, and the {R}iemann zeta-function},
  booktitle = {Chaos: Poincar{\'e} Seminar 2010},
  series    = {Progress in Mathematical Physics},
  volume    = {66},
  publisher = {Birkh{\"a}user},
  address   = {Basel},
  year      = {2013},
  pages     = {125--168},
}

@article{KuijlaarsMinaDiaz,
  author  = {Kuijlaars, Arno B. J. and Mi{\~n}a-D{\'i}az, Erwin},
  title   = {Universality for conditional measures of the sine point process},
  journal = {Journal of Approximation Theory},
  volume  = {243},
  year    = {2019},
  pages   = {1--24},
}

@article{BufetovConditional,
  author  = {Bufetov, Alexander I.},
  title   = {Conditional measures of determinantal point processes},
  journal = {Functional Analysis and Its Applications},
  volume  = {54},
  number  = {1},
  year    = {2020},
  pages   = {7--20},
}

@article{BufetovQuasiSymmetries,
  author  = {Bufetov, Alexander I.},
  title   = {Quasi-symmetries of determinantal point processes},
  journal = {The Annals of Probability},
  volume  = {46},
  number  = {2},
  year    = {2018},
  pages   = {956--1003},
}

@article{BourgadeErdosYau,
  author  = {Bourgade, Paul and Erd{\H o}s, L{\'a}szl{\'o} and Yau, Horng-Tzer},
  title   = {Universality of general beta-ensembles},
  journal = {Duke Mathematical Journal},
  volume  = {163},
  number  = {6},
  year    = {2014},
  pages   = {1127--1190},
}

@article{BourgadeErdosYauNonconvex,
  author  = {Bourgade, Paul and Erd{\H o}s, L{\'a}szl{\'o} and Yau, Horng-Tzer},
  title   = {Bulk universality of general beta-ensembles with non-convex potential},
  journal = {Journal of Mathematical Physics},
  volume  = {53},
  number  = {9},
  year    = {2012},
  pages   = {095221},
}

@book{ErdosYauBook,
  author    = {Erd{\H o}s, L{\'a}szl{\'o} and Yau, Horng-Tzer},
  title     = {A Dynamical Approach to Random Matrix Theory},
  series    = {Courant Lecture Notes in Mathematics},
  volume    = {28},
  publisher = {American Mathematical Society},
  address   = {Providence, RI},
  year      = {2017},
}

@misc{BourgadeHuangLoop,
  author        = {Bourgade, Paul and Huang, Jiaoyang},
  title         = {Loop equations characterize random matrix statistics},
  year          = {2026},
  eprint        = {2607.07617},
  archivePrefix = {arXiv},
  primaryClass  = {math.PR},
  url           = {https://arxiv.org/abs/2607.07617},
  note          = {arXiv:2607.07617},
}

@article{Dobrushin1968,
  author  = {Dobrushin, Roland L.},
  title   = {The description of a random field by means of conditional probabilities and conditions of its regularity},
  journal = {Theory of Probability and Its Applications},
  volume  = {13},
  number  = {2},
  year    = {1968},
  pages   = {197--224},
}

@article{LanfordRuelle,
  author  = {Lanford, Oscar E. and Ruelle, David},
  title   = {Observables at infinity and states with short range correlations in statistical mechanics},
  journal = {Communications in Mathematical Physics},
  volume  = {13},
  year    = {1969},
  pages   = {194--215},
}

@book{Georgii,
  author    = {Georgii, Hans-Otto},
  title     = {Gibbs Measures and Phase Transitions},
  edition   = {2},
  series    = {De Gruyter Studies in Mathematics},
  volume    = {9},
  publisher = {De Gruyter},
  address   = {Berlin},
  year      = {2011},
}

@article{YauRelativeEntropy,
  author  = {Yau, Horng-Tzer},
  title   = {Relative entropy and hydrodynamics of {G}inzburg--{L}andau models},
  journal = {Letters in Mathematical Physics},
  volume  = {22},
  number  = {1},
  year    = {1991},
  pages   = {63--80},
}

@article{FoellmerEntropy,
  author  = {F{\"o}llmer, Hans},
  title   = {On entropy and information gain in random fields},
  journal = {Zeitschrift f{\"u}r Wahrscheinlichkeitstheorie und Verwandte Gebiete},
  volume  = {26},
  number  = {3},
  year    = {1973},
  pages   = {207--217},
}

@article{GeorgiiZessin,
  author  = {Georgii, Hans-Otto and Zessin, Hans},
  title   = {Large deviations and the maximum entropy principle for marked point random fields},
  journal = {Probability Theory and Related Fields},
  volume  = {96},
  number  = {2},
  year    = {1993},
  pages   = {177--204},
}

@article{DereudreVariational,
  author  = {Dereudre, David},
  title   = {Variational principle for {G}ibbs point processes with finite range interaction},
  journal = {Electronic Communications in Probability},
  volume  = {21},
  year    = {2016},
  pages   = {Paper No. 10, 11},
}

@article{GuoPapanicolaouVaradhan,
  author  = {Guo, Maozheng and Papanicolaou, George C. and Varadhan, S. R. S.},
  title   = {Nonlinear diffusion limit for a system with nearest neighbor interactions},
  journal = {Communications in Mathematical Physics},
  volume  = {118},
  number  = {1},
  year    = {1988},
  pages   = {31--59},
}

\bigskip 

\noindent{\sc School of Mathematics, University of Edinburgh, James Clerk Maxwell Building, Peter Guthrie Tait Rd, Edinburgh EH9 3FD, U.K.}\newline
\href{mailto:theo.assiotis@ed.ac.uk}{\small theo.assiotis@ed.ac.uk}

\end{document}